\documentclass[10pt, reqno, oneside, english]{smfart}

\usepackage{smfthm}

\usepackage[height=22cm, bottom=3.5cm]{geometry}
\usepackage{amsmath, amsthm, amssymb,amsfonts}
\usepackage[utf8]{inputenc}
\usepackage[english]{babel}
\usepackage[T1]{fontenc}
\usepackage{url}
\usepackage{braket}
\usepackage{enumerate}
\usepackage{xcolor}
\usepackage[colorlinks=true, linktoc=page, citecolor=blue, linkcolor=blue, urlcolor=blue]{hyperref}
\usepackage{lipsum}
\usepackage{graphicx}
\usepackage{nicematrix}
\usepackage{tikz}

\usepackage{setspace}
\renewcommand{\arraystretch}{1.15}

\numberwithin{equation}{section}

\newcommand{\C}{\mathbb{C}}
\newcommand{\Q}{\mathbb{Q}}
\newcommand{\Z}{\mathbb{Z}}
\newcommand{\N}{\mathbb{N}}
\newcommand{\OO}{\mathcal{O}}

\allowdisplaybreaks

\def\GL{\mathrm{GL}}
\def\SL{\mathrm{SL}}

\def\diag{\mathrm{diag}}
\def\wF{\widehat{F}^\times}
\def\tM{\widetilde{M}}
\def\tG{\widetilde{G}}
\def\dM{\mathcal{E}^2(M)}
\def\dtM{\mathcal{E}^2(\tM)}

\def\wF{\widehat{F}^\times}
\def\XMnru{\mathcal{X}_{nr}^u(M)}
\def\XtMnru{\mathcal{X}_{nr}^u(\tM)}
\def\S{\mathbb{S}}

\def \val{\mathrm{val}}

\def\1n{\{1, \dots , n\}}
\def\supp{\mathrm{supp}}
\def\chinru{\chi_{nr}^u}
\def\st{\mathrm{\;s.t\;}}

\def\Ind{\mathrm{Ind}}

\DeclareFontFamily{U}{mathx}{\hyphenchar\font45}
\DeclareFontShape{U}{mathx}{m}{n}{
      <5> <6> <7> <8> <9> <10>
      <10.95> <12> <14.4> <17.28> <20.74> <24.88>
      mathx10
      }{}
\DeclareSymbolFont{mathx}{U}{mathx}{m}{n}
\DeclareFontSubstitution{U}{mathx}{m}{n}
\DeclareMathAccent{\widecheck}{0}{mathx}{"71}

\title{On a basepoint issue in the tempered dual of~$p$-adic~$\SL(n)$}
\author{Marie Dautheville}
\address{Institut Élie Cartan de Lorraine, Nancy \& Metz, France}
\email{marie.dautheville@univ-lorraine.fr}

\begin{document}

\frontmatter
\begin{abstract} Let $F$ be a non-archimedean local field of characteristic zero. Let $M$ be a Levi subgroup of $G = \SL(n,F)$. Consider the action on the discrete series of $M$ of the group of unitary unramified characters of $M$ (where a character acts by twisting). Given an orbit $ \OO$ for that action, let $W_\OO$ be the global stabilizer of $\OO$ in the Weyl group of $M$. We construct families of orbits $\OO$ for which the action of $W_\OO$ has no fixed point.

More precisely, let $n \geq 8$ be an integer that is not prime and not equal to $9$. We attach families of orbits without fixed points to any divisor $m \geq 2$ of $n$ such that $m \;| \; q-1$ where $q$ is the order of the residue field of $F$. The Levi subgroup $M \subset G$ for the corresponding families has $m$ blocks of size $k\geq 2$ and $2$ blocks of size $m$. The core of the paper is the construction of superculpidal representations of $\GL(m,F)$ and $\GL(k,F)$ satisfying some conditions that we require to construct the families of orbits $\OO$ that we expects.

In addition, we study the fixed point problem for the dimensions $n$ that we excluded above,~i.e.,~prime or small dimensions. We prove that there is always a fixed point under the action of~$W_\OO$ in those cases.

\end{abstract}


\maketitle

\title[On a basepoint issue in the tempered dual of~$p$-adic~$\SL(n)$]{On a basepoint issue in the tempered dual of~$p$-adic~$\SL(n)$}
\maketitle

\section{Introduction}

Let $F$ be a non-archimedean local field of characteristic zero, and let $G$ be a connected reductive group over $F$. Denote by $\widehat{G}_{temp}$ the tempered dual of $G$. Its connected components are indexed by $G$-conjugacy classes of pairs $(M, \sigma)$ where $M \subset G$ is a Levi subgroup and $\sigma$~is a discrete series representation of $M$. The component attached to a pair $(M, \sigma)$, denoted by~$\widehat{G}_{M,\sigma}$, consists of the irreducible reprensentations that occur in the parabolic induction to $G$ of a twist~$\sigma \otimes \chi$, where $\chi$ is a unitary unramified character of $M$.

In some special cases, the topology of the corresponding component $\widehat{G}_{M, \sigma}$ of $\widehat{G}_{temp}$ is well-understood. This line of investigation was initiated by Plymen in the case of $\SL(2, \Q_p)$ in \cite{PSL2} and later for $\GL(n,F)$ in \cite{plymen1, plymen2} and for general reductive groups in \cite{plymen}. Other classical families of groups were considered by Plymen and his students: see e.g. \cite{PL,PJ}. More recently, new results have appeared in work of Afgoustidis and Aubert \cite{AAAMA}, Aubert and Plymen \cite{AP} and Clare and Crisp \cite{CC}. These results has connections with conjectures and theorems of Aubert, Baum, Plymen and Solleveld on the geometry of connected components of the smooth and tempered dual in various settings: see \cite{ABPS1, ABPS5, ABPS8, Solleveld} for surveys.

To be more specific about the connected components of the tempered dual attached to a pair~$(M, \sigma)$~as above, let $\dM$ be the set of equivalence classes of discrete series representations of $M$, let $\OO \subset \mathcal{E}^2(M)$ be the orbit $\{\sigma \otimes \chi, \; \chi \in \chinru(M)\}$ where $\chinru(M)$ denotes the group of unitary unramified characters of $M$. Denote by $W(M)$ the Weyl group of $M$ defined by~$W(M) =~N_G(M)/M$ where $N_G(M)$ is the $G$-normalizer of $M$. The Weyl group $W(M)$ of~$M$~acts on $\sigma$ by conjugaison, that is to say, for an element $w \in W(M)$, $$w \cdot \sigma = \sigma(w^{-1} \cdot w).$$ We denote the action of $w$ on $\sigma$ by $^w \sigma$. Let $W_\OO$ be the global stabilizer of $\OO$ in the Weyl group of $M$. Explicitly, \begin{equation*} W_\OO = \{ w \in W(M) \; | \; ^w\sigma \simeq \sigma \otimes \chi \mathrm{ \; for \; some \;} \chi \in \chinru(M)\}. \end{equation*} 

The geometry of $\widehat{G}_{M, \sigma}$ depends on the geometry of the action of $W_\OO$ on $\OO$. For instance, Afgoustidis and Aubert proved in \cite{AAAMA} that if this action has a fixed point with ``good properties'', then a simple structure theorem holds for $\widehat{G}_{M, \sigma}$. In this case such a fixed point plays the role of a ``natural base point'' for the connected component $\widehat{G}_{M, \sigma}$ of $\widehat{G}_{temp}$. This naturally raises the following question: when does the action of $W_\OO$ on $\OO$ admit a fixed point? It is this question which we address in this paper, when $G$ is the group $\SL(n,F)$. When $G$ is a quasisplit classical group over $F$ (i.e. a quasisplit symplectic, orthogonal or unitary group), the authors of \cite{AAAMA} proved that the action of $W_\OO$ on $\OO$ always has a fixed point. But beyond this case, the question seems to have been little studied. In fact, to the author's knowledge, only one example of orbit without a fixed point is known: it appears in a paper of Roche \cite{roche}. In this example, $G = \SL(8,F)$, while $(M, \sigma)$ consists of a Levi subgroup isomorphic to $\left( \GL(2) \times \GL(2) \times \GL(4) \right) \cap \SL(8)$ and~$\sigma$~is a very particular supercuspidal representation of $M$. 

In this paper we show that Roche's example is not an isolated phenomenon. We construct infinite families of examples that do not have basepoints. Our examples occur in $\SL(n,F)$ for non-prime $n$ and are related to Roche's example (although our construction does not specialize to Roche's for appropriate $(M, \sigma)$). Conversely, for prime $n$, we show that there is always a fixed point.

\bigskip

To set the framework, we first recall some basic facts on discrete series of a Levi subgroup~$M~\subset~\SL(n,F)$~in Section \ref{1}. One essential point is Proposition \ref{pi}, originally from~\cite{GK}~and that we can find in a more general case in \cite{tadic} and in \cite{goldberg}, is that given a discrete series $\sigma$ of $M$ there exists a discrete series $\pi_\sigma$ of a Levi subgroup~$\tM \subset~\GL(n,F)$~such that $\sigma$ appears as a subrepresentation of the restriction of $\pi_\sigma$ to $M$. Then we can write $\pi_\sigma$ as a tensor product of discrete series of each block of $\tM$. The second important point is that the representation $\pi_\sigma$ is unique up to a torsion by a character of $\tM$ that is trivial on $M$, i.e. a character $\eta$ of $\wF$ composed with the determinant map on $\SL(n,F)$. We begin the construction of our families in Section \ref{2}. The core of the construction of our examples is to describe and control the following subgroup of $\wF$:

\bigskip

\begin{defi}\label{selftorsion} Let $m \geq 1$ be any integer. Let $\rho \in \mathcal{E}^2(\GL(m,F))$. We define the self-torsion group associated to $\rho$, denoted by $X(\rho)$, as $$X(\rho) = \{ \mu \in \wF \; | \;  \rho \simeq \rho\otimes( \mu\circ \det)\}.$$ We shall write $\mu$ instead of $ \mu\circ \det$. The self-torsion group appears and plays an important role in \cite{ GK, goldberg, BKSLII}.
\end{defi}

\bigskip

In Section \ref{2}, we construct components without basepoints as follows. Let us fix an integer $n\geq 8$ and assume $n$ is not prime and $n \neq 9$. We partition $n$ as $mk +2m$ where $m$ is a nontrivial divisor of $n$ and assume there exists a character $\eta$ of $F^\times$ of order $m$ that is totally ramified; i.e. such that $\eta(\varpi_F) =1$ where $\varpi_F$ is a uniformizer of $F$. We consider the following Levi subgroup $\tM$ of $\GL(n,F)$: \begingroup\setlength{\arraycolsep}{1,5pt}\renewcommand{\arraystretch}{0.8} $$\tM = \begin{pmatrix}
    \GL(k) &&&& \\&\ddots  &&&\\&& \GL(k)&& \\ &&& \GL(m)& \\ &&&&  \GL(m)
\end{pmatrix}.$$ \endgroup Then we define a discrete series $\pi = \pi_1 \otimes \left( \pi_1 \otimes \eta \right) \dots \otimes \left( \pi_1 \otimes \eta^{m-1} \right) \otimes \pi_{m+1} \otimes \pi_{m+2}$ of $\tM$ where~$\pi_1 \in~\mathcal{E}^2(\GL(k,F))$ and $\pi_{m+1}, \pi_{m+2} \in \mathcal{E}^2(\GL(k,F))$ are such that \begin{equation}\label{prop} X(\pi_{m+1}) = \left< \eta \right>, \qquad X(\pi_{m+2}) = \left< \chi\eta \right> \quad \mathrm{and} \quad \eta, \chi\eta \notin X(\pi_1)\end{equation} where $\chi$ is an unramified character of $F^\times$ of order $m$. Our main result is the following theorem.

\medskip

\begin{theo} The orbit $\OO$ attached to $\pi$ given above does not have a fixed point under the action of $W_\OO$. \end{theo}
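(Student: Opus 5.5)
The plan is to pass from $\SL(n,F)$ to $\GL(n,F)$ using Proposition~\ref{pi}, and to reduce everything to a computation with self-torsion groups (Definition~\ref{selftorsion}). Let $M = \tM \cap \SL(n,F)$ and let $\sigma \subset \pi|_M$, so that $\OO$ is the orbit of $\sigma$. I will use two facts. First, every unitary unramified character of $M$ is the restriction of a unitary unramified character $\tilde\chi = (\chi_1,\dots,\chi_{m+2})$ of $\tM$; I expect this to be standard, since $\tM/M\,Z(\tM)$ is compact modulo the relevant lattice, and I would check it directly in this block situation. Second, by the uniqueness part of Proposition~\ref{pi}, if $\sigma'\subset \pi'|_M$ and $\sigma''\subset\pi''|_M$ satisfy $\sigma'\simeq\sigma''$, then $\pi''\simeq \pi'\otimes(\nu\circ\det)$ for some $\nu\in\wF$.

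\emph{Step 1: a specific element of $W_\OO$.} I take $w=c$, the cyclic permutation of the $m$ blocks of size $k$ (extended trivially on the two $\GL(m)$ blocks). Then
\[ {}^c\pi \simeq \pi\otimes(\eta^{-1}\circ\det) \otimes \bigl(1,\dots,1,\,1,\,\chi\bigr), \]
using two identities. On the $\GL(k)$ blocks, the cyclic shift of $\pi_1,\pi_1\eta,\dots,\pi_1\eta^{m-1}$ equals the original tuple twisted by $\eta^{-1}$, since $\eta^m=1$. On the $\GL(m)$ blocks, $\pi_{m+1}\eta^{-1}\simeq\pi_{m+1}$ because $\eta\in X(\pi_{m+1})$, and $\pi_{m+2}\eta^{-1}\simeq\pi_{m+2}\chi$ because $\chi\eta\in X(\pi_{m+2})$. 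Restricting to $M$, where $\eta^{-1}\circ\det$ is trivial, shows that ${}^c\sigma$ lies in $\OO$, so $c\in W_\OO$. Strictly, this only shows that ${}^c\sigma$ lies in the restriction of $\pi\otimes\tilde\chi$. To land exactly on an element of $\OO$, I will use that the constituents of $\pi|_M$ are permuted transitively by $\tM$-conjugation, and that this conjugation is absorbed by unramified twists. I expect to need a short lemma here.

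\emph{Step 2: a fixed point forces a twist relation upstairs.} Suppose $\sigma'=\sigma\otimes\tilde\chi|_M$ is fixed by $W_\OO$, and in particular by $c$. Put $\pi'=\pi\otimes\tilde\chi$. By the uniqueness fact, ${}^c\pi'\simeq\pi'\otimes\nu$ for some $\nu\in\wF$. Comparing the two $\GL(m)$ components (which $c$ does not move) gives $\pi_{m+1}\simeq\pi_{m+1}\nu$ and $\pi_{m+2}\simeq\pi_{m+2}\nu$. Hence
\[ \nu\in X(\pi_{m+1})\cap X(\pi_{m+2})=\langle\eta\rangle\cap\langle\chi\eta\rangle . \]
This intersection is trivial. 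Indeed, if $\eta^a=(\chi\eta)^b$, then evaluating at $\varpi_F$ gives $\chi(\varpi_F)^b=1$, so $m\mid b$. Therefore $(\chi\eta)^b=1$, hence $\eta^a=1$ and $\nu=1$.

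\emph{Step 3: contradiction on the $\GL(k)$ blocks.} With $\nu=1$, comparing the $\GL(k)$ components gives
\[ \pi_1\eta^{i-1}\chi_{i-1}\simeq\pi_1\eta^{i}\chi_i \quad\text{for each } i \bmod m. \]
So $\eta\,\chi_i\chi_{i-1}^{-1}\in X(\pi_1)$, with $\chi_i\chi_{i-1}^{-1}$ unramified. Since $X(\pi_1)$ is a finite group, $\eta\mu\in X(\pi_1)$ for some unramified $\mu$ of finite order. Taking determinants of central characters shows that $(\eta\mu)^k$ is trivial, and evaluating at $\varpi_F$ shows that $\mu(\varpi_F)^k=1$.

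This last step is the main obstacle. The hypothesis only gives $\eta,\chi\eta\notin X(\pi_1)$, while we must exclude every $\eta\mu$. I would try to show that the data are rigid enough to pin down $\mu$. One route is to use the $\GL(m)$ blocks again, through the other generators of $W_\OO$: the swap $s$ of the two $\GL(m)$ blocks, or $s$ combined with shifts, should force $\mu\in\langle\chi\rangle$, so that $\eta\mu=\eta\chi^j$. Another route is to use the explicit construction of $\pi_1$ (tame type or explicit $X(\pi_1)$) to show that $X(\pi_1)\cap\eta\cdot\{\text{unramified}\}\subset\{\eta,\chi\eta\}$, or even that it is empty. If the abstract argument fails, I would instead compute $X(\pi_1)$ from the construction in Section~\ref{2}, and rule out $\eta\mu$ by comparing the conductor or level of $\eta$ with the characters in $X(\pi_1)$.
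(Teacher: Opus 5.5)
Your Steps 1 and 2 follow the paper's argument. The cycle $c=(1\;\dots\;m)$ lies in $W_\OO$ by exactly the twist identity you write. A fixed point then forces the twisting character $\nu$ into $X(\pi_{m+1})\cap X(\pi_{m+2})$, and you justify the triviality of this intersection (evaluation at $\varpi_F$) more carefully than the paper does. The constituent issue you raise in Step 1 does not arise. Indeed $X(\pi)\subset X(\pi_{m+1})\cap X(\pi_{m+2})=\{1\}$, so $\pi|_M$ is irreducible by Propositions \ref{cyclic} and \ref{length}, and ${}^c\sigma=({}^c\pi)|_M\simeq\sigma\otimes\tilde\chi|_M$ without any extra lemma.

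The genuine gap is Step 3, which you leave open, and it cannot be closed from the hypothesis $\eta,\chi\eta\notin X(\pi_1)$ alone. Your first route fails. Since $X(\pi_1)$, $X(\pi_{m+1})$ and $X(\pi_{m+2})$ are pairwise distinct, Lemma \ref{selftorsiongrp} forces every $w\in W_\OO$ to fix both $\GL(m)$ blocks, so neither the swap $s$ nor any combination of $s$ with shifts is in $W_\OO$. In fact the conclusion can fail under that hypothesis alone. Take $m=2$ and suppose $\eta\mu\in X(\pi_1)$ for some unramified $\mu$, while still $\eta,\chi\eta\notin X(\pi_1)$; this is compatible, e.g.\ for $\mu$ of order $4$. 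Then $\pi'=\pi\otimes(1,\mu,1,1)$ satisfies ${}^c\pi'\simeq\pi'$, and $W_\OO\subset\{1,c\}$. Its restriction $\pi'|_M$ is irreducible as above and lies in $\OO$, so it is a genuine fixed point.

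The missing ingredient is therefore a stronger condition on $\pi_1$. The paper's proof actually invokes $X(\pi_1)=\{1\}$, which is stronger than what Proposition \ref{main} states. This holds for a twisted Steinberg representation, or for the supercuspidal built from $E_3$ when $\gcd(k,q-1)=1$. With it, your relation $\eta\chi_i\chi_{i-1}^{-1}\in X(\pi_1)$ gives at once $\eta=\chi_{i-1}\chi_i^{-1}$, so $\eta$ is unramified. This contradicts $\eta$ being totally ramified and nontrivial. Your second route, computing $X(\pi_1)$ for an explicit $\pi_1$, is the right fix. It has to be carried out, however, and it amounts to imposing $X(\pi_1)\cap\eta\,\mathcal{X}_{nr}(F^\times)=\emptyset$, for instance $X(\pi_1)=\{1\}$.
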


\medskip 

The rest of Section \ref{2} is devoted to proving this theorem under assumptions we made above, i.e. there exists a totally ramified character $ \eta$ of $F^\times$ of order $m$ and the representations $\pi_1, \pi_{m+1}$ and $\pi_{m+2}$ exist. 

Section \ref{3} is dedicated to constructing supercuspidal representations of the general linear group satisfying the assumptions in \eqref{prop}. The argument is, in essence, sketched in \cite{roche} and uses the Bushnell and Kutzko theorem of types. Given a totally ramified field extension $E/F$ and an additive character $\psi$ of $F$, using \cite{BK}, we construct a simple stratum, then a simple type $(J, \lambda)$. Then by a compact induction of $\lambda$ to $\GL(m,F)$, we obtain a supercuspidal representation of $GL(m,F)$ and we can make its self-torsion group explicit. This subgroup of $\wF$ is controlled by the norm map $N_{E/F} : E^\times \longrightarrow F^\times$ of $E^\times$. The last step is to fix explicit extensions of $F$ to construct $\pi_{m+1}$ and $\pi_{m+2}$ and control their self-torsion groups. 

Finally, Section \ref{4} treats the cases that we excluded in Section \ref{2}, that is to say, all prime numbers and small dimensions that are not prime $(4, 6, 9)$. The mains results of this section prove that for any Levi subgroup $M$ of $\SL(n,F)$ with $n$ prime or equal to $4, 6$ or $9$, and any orbit $\OO \subset \dM$, there is always a fixed point in $\OO$ under the action of $W_\OO$. For prime dimension, the argument is combinatorial in nature: analyzing the arithmetic of the size of blocks in a Levi subgroup of $\SL(n)$, we show that the fixed point problem simplifies and that the method of \cite{AAAMA} can be used to obtain a fixed point. The remaining small dimensions are treated by a case-by-case analysis: starting with a discrete series $\sigma$ of a Levi subgroup, we construct a fixed point in the orbit $\OO$ of $\sigma$.

\medskip

\subsection*{Acknowledgements} The author would like to sincerely thank her PhD supervisors, Alexandre Afgoustidis and Angela Pasquale, for their careful reading of the manuscript and for the valuable discussions and exchanges throughout this work. The author also wishes to thank Anne-Marie Aubert, Roger Plymen, and Alan Roche for their time and stimulating exchanges that have helped enrich this article.

This work was supported by the project OpART (ANR-23-CE40-0016) of the Agence Nationale de la Recherche.

This manuscript was written without the use of any AI except for spelling and language correction.

\section{Discrete series of a Levi subgroup of $\SL(n,F)$ and their Weyl group}\label{1}

\subsection{Levi subgroups of the special linear group} 

Fix an integer $n\geq 1$. We denote by $G$ the group $\SL(n,F)$ and $\widetilde{G}$ the group $\GL(n,F)$. We will give a quick description of the Levi subgroups of $G$ and $\widetilde{G}$. We denote by $\tM$ a Levi of $\widetilde{G}$. There exists a partition $n = n_1 +n_2 + \dots + n_r$ such that $\tM$ is conjugate to \begingroup\setlength{\arraycolsep}{1,5pt}\renewcommand{\arraystretch}{1} $$ \left[\begin{matrix}
    \GL(n_1, F) &&&\\ &\GL(n_2, F)&& \\ &&\ddots& \\ &&&\GL(n_r, F) 
\end{matrix}\right].$$ In particular, $\tM$ is isomorphic to $\mathrm{GL}(n_1, F) \times \dots \times \mathrm{GL}(n_r, F)$. \endgroup Every Levi subgroup of $G$ has the form $\tM \cap G$ where $\tM$ is a Levi subgroup of $\tG$; so up to conjugation, \begingroup\setlength{\arraycolsep}{1,5pt}\renewcommand{\arraystretch}{1}  \begin{equation*}  M = \left[\begin{matrix}
   \GL(n_1, F)&&&\\ &\GL(n_2, F)&& \\ &&\ddots& \\ &&&\GL(n_r, F)
\end{matrix}\right]_{\det = 1}. \end{equation*} \endgroup

The Weyl groups of $M$ and $\tM$, denoted by $W(M)$ (resp. $W(\tM)$), are isomorphic and acts by permutations on the diagonal blocks of $M$ (resp. $\tM$). In this way, the Weyl group of $M$ (resp.~$\tM$) appears as a subgroup of the symmetric group $\mathfrak{S}_r$ and is generated by transpositions~$(i \; j)$ such that the blocks $n_i$ and $n_j$ have the same size.

\bigskip

\subsection{Discrete series of $M$ as a subrepresentation of a Levi subgroup of $\tM$}
We now recall the results from \cite{tadic} that will be needed in what follows.

\bigskip

\begin{prop}[\cite{tadic}, Proposition 2.2]\label{pi} Let $\sigma$ be an irreducible smooth representation of $M$, there exists  an irreducible smooth representation $\pi_\sigma$ of $\tM$ such that $\sigma$ is isomorphic to a subrepresentation of $\pi_{\sigma|M}$. If $\sigma$ is a discrete series of $M$, then there exists a discrete series $\pi_\sigma$ of $\tM$ such that $\sigma$ is isomorphic to a subrepresentation of $\pi_{\sigma|M}$.
\end{prop}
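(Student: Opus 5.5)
Set $Z = Z(\tM)$, the centre of $\tM$ (diagonal scalars on each block). The key structural fact I will use is $\tM = Z\cdot M'$ up to finite index: more precisely, $M$ is normal in $\tM$, $Z M$ is an open normal subgroup of $\tM$ of finite index, and $\tM/ZM \cong F^\times/(F^\times)^{d}$ for a suitable $d$ related to $\gcd(n_1,\dots,n_r)$. I first produce $\pi_\sigma$ for irreducible $\sigma$, then treat discrete series.

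Step 1 (extension to $ZM$). The group $ZM$ is the product $Z\times M$ modulo the finite central subgroup $Z\cap M$. Since $\sigma$ is irreducible and admissible, Schur's lemma gives a central character $\omega_\sigma$ on $Z\cap M$. Extend $\omega_\sigma$ to a character $\tilde\omega$ of $Z$. This is possible because $Z\cong (F^\times)^r$ is abelian and $Z\cap M$ is a closed subgroup, so characters extend. Then $\sigma\otimes\tilde\omega$ is an irreducible representation $\sigma'$ of $ZM$ whose restriction to $M$ is $\sigma$.

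Step 2 (induce and pick a constituent). Form $\Ind_{ZM}^{\tM}\sigma'$, which is finite induction because the index is finite. By Frobenius reciprocity there is a nonzero map $\Ind\to$ anything containing $\sigma'$. More usefully, $\Ind_{ZM}^{\tM}\sigma'$ is of finite length and admissible, so it has an irreducible quotient $\pi$. By Frobenius reciprocity, $\mathrm{Hom}_{ZM}(\sigma',\pi|_{ZM})\neq 0$. By Clifford theory (Mackey), $\pi|_{ZM}$ is a finite direct sum of $\tM$-conjugates of $\sigma'$, so it is semisimple. Restricting to $M$, $\sigma$ therefore occurs as a subrepresentation of $\pi|_M$. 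Set $\pi_\sigma=\pi$.

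Step 3 (discrete series). Suppose $\sigma$ is square-integrable modulo the centre of $M$. The centre of $M$ is $Z\cap M$ and is finite, so $\sigma$ is square integrable, with matrix coefficients in $L^2(M)$. Choose $\tilde\omega$ unitary, which is possible since characters of a finite group are unitary and a unitary character of a closed subgroup of a locally compact abelian group extends to a unitary character. Matrix coefficients of $\pi_\sigma$ restricted to $ZM$ are sums of coefficients of conjugates of $\sigma'$. Their absolute values are $Z$-invariant, and on $ZM/Z\cong M/(Z\cap M)$ they are square-integrable. Since $\tM/ZM$ is finite, $|c|$ is square-integrable on $\tM/Z$, so $\pi_\sigma$ is a discrete series of $\tM$. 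Alternatively, one can invoke Casselman's criterion: the exponents of $\pi_\sigma$ along standard parabolics are those of conjugates of $\sigma$, extended by the unitary $\tilde\omega$.

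The main obstacle I expect is the finite-index and semisimplicity bookkeeping in Step 2, namely justifying that $\tM/ZM$ is finite. This follows because $\det$ maps $Z$ onto a subgroup containing $(F^\times)^{n}$, which has finite index in $F^\times$ in characteristic zero. I will also need to confirm that the restriction is semisimple, so that "quotient" can be upgraded to "subrepresentation". Both points hold by standard Clifford theory for open normal subgroups of finite index.
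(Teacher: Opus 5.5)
The paper does not prove this statement; it only cites Tadić (the argument goes back to Gelbart--Knapp). Your strategy is the standard argument behind that citation: extend $\sigma$ to the open normal finite-index subgroup $ZM$ by a character of $Z$ extending $\omega_\sigma$, induce to $\tM$, and take an irreducible quotient. Steps 1 and 2 are fine. The semisimplicity you worry about is not needed. Frobenius reciprocity for the open subgroup $ZM$ already gives a nonzero map from the irreducible $\sigma'$ to $\pi|_{ZM}$, and such a map is automatically injective.

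There is, however, a false claim in Step 3, also repeated in Step 1. The centre $Z\cap M=\{(z_1,\dots,z_r)\in(F^\times)^r \mid \prod_i z_i^{n_i}=1\}$ of $M$ is not finite once $r\geq 2$; it is non-compact of rank $r-1$. The diagonal torus of $\SL(2,F)$ is the simplest example.

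This has two consequences:
\begin{itemize}
\item The sentence ``$\sigma$ is square integrable, with matrix coefficients in $L^2(M)$'' is wrong. For $r\geq 2$, no nonzero matrix coefficient of an irreducible representation of $M$ lies in $L^2(M)$, because $|c|$ is invariant (up to a positive character) under the non-compact group $Z\cap M$.
\item Unitarity of $\omega_\sigma$ cannot be deduced from finiteness.
\end{itemize}

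The repair is to use the actual definition. A discrete series of $M$ has unitary central character $\omega_\sigma$ on $Z\cap M$ and matrix coefficients square-integrable on $M/(Z\cap M)$. Extend $\omega_\sigma$ to a unitary character of $Z$, which works as you say because $Z\cap M$ is closed in the locally compact abelian group $Z$. The rest of your Step 3 then goes through unchanged, since it only uses square-integrability on $ZM/Z\cong M/(Z\cap M)$ and finiteness of $\tM/ZM$.
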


\bigskip

 The following proposition proves that, given $\sigma$, the representation $\pi_\sigma$ in Proposition \ref{pi} is, in fact, unique up to a twisting by a character of $F^\times$. 
Let $\OO_{\widetilde{M}}(\pi_1)$ denote the set of $\tau \in \mathcal{E}^2(M)$ isomorphic to a subrepresentation of $\pi_{1|M}$.

\bigskip

\begin{prop}[\cite{tadic}, Corollary 2.5]\label{tadic} Let $\pi_1,\pi_2 \in \mathcal{E}^2(\widetilde{M})$. The following assertions are equivalent: 
\begin{enumerate}
\item There exists a character $\eta$ of $\widetilde{M}$ trivial on $M$ such that $\pi_1 \simeq \pi_2 \otimes \eta$. 
\item $\OO_{\widetilde{M}} (\pi_1) \cap \OO_{\widetilde{M}}(\pi_2) \neq \emptyset$.
\item $\OO_{\widetilde{M}}(\pi_1) = \OO_{\widetilde{M}}(\pi_2)$.
\end{enumerate} 
\end{prop}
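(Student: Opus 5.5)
The plan is to prove the cycle of implications $(1)\Rightarrow(3)\Rightarrow(2)\Rightarrow(1)$. Only the last one carries real content, and it rests on a Clifford-theory argument for a subgroup of finite index in $\tM$ with abelian quotient.

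For $(1)\Rightarrow(3)$: if $\pi_1\simeq\pi_2\otimes\eta$ with $\eta$ trivial on $M$, then $\pi_{1|M}\simeq\pi_{2|M}$ as representations of $M$. Hence the two representations have the same irreducible subrepresentations, and $\OO_{\tM}(\pi_1)=\OO_{\tM}(\pi_2)$.

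For $(3)\Rightarrow(2)$ it suffices to show that $\OO_{\tM}(\pi)\neq\emptyset$ for every $\pi\in\mathcal{E}^2(\tM)$.
I would argue as follows. Let $Z$ be the centre of $\tM$ and set $H=ZM$, with blocks of sizes $n_1,\dots,n_r$.
\begin{itemize}
\item The group $\tM/H$ is a quotient of $F^\times/\det(Z)$.
\item We have $\det(Z)\supset\{a_1^{n_1}\cdots a_r^{n_r}\}\supset (F^\times)^{n}$.
\item Since $F$ has characteristic zero, $(F^\times)^n$ is open of finite index in $F^\times$.
\end{itemize}
Hence $H$ is open and normal in $\tM$, with finite abelian quotient. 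By standard Clifford theory (Gelbart–Knapp, Tadić), $\pi_{|H}$ is then a finite direct sum of irreducible representations. Each of these restricts irreducibly to $M$, because $Z$ acts by scalars. Being unitary and square-integrable modulo centre, these summands are discrete series of $M$. So $\OO_{\tM}(\pi)$ is nonempty, and $(3)\Rightarrow(2)$ follows.

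For $(2)\Rightarrow(1)$, let $\sigma\in\OO_{\tM}(\pi_1)\cap\OO_{\tM}(\pi_2)$, and write $\omega_1,\omega_2$ for the central characters of $\pi_1,\pi_2$.
\begin{itemize}
\item Both $\omega_1$ and $\omega_2$ agree with the central character of $\sigma$ on $Z\cap M$. So $\omega_1\omega_2^{-1}$ factors through $Z/(Z\cap M)\simeq\det(Z)\subset F^\times$.
\item Extend this character to a character $\mu$ of $F^\times$ (characters of closed subgroups of locally compact abelian groups extend). Replacing $\pi_2$ by $\pi_2\otimes(\mu\circ\det)$, which is allowed by $(1)\Rightarrow(3)$, we may assume $\omega_1=\omega_2$.
\item Then $\tilde\sigma:=\omega_1\otimes\sigma$ is a well-defined irreducible representation of $H=ZM$, and it is a subrepresentation of both $\pi_{1|H}$ and $\pi_{2|H}$.
\item Since $H$ has finite index, Frobenius reciprocity gives $\pi_2\hookrightarrow\Ind_H^{\tM}\tilde\sigma$.
\item By exactness of induction and the projection formula,
\[ \Ind_H^{\tM}\tilde\sigma\subset\Ind_H^{\tM}(\pi_{1|H})\simeq\pi_1\otimes\Ind_H^{\tM}\mathbf{1}\simeq\bigoplus_{\chi}\pi_1\otimes\chi, \]
where $\chi$ runs over the finitely many characters of the finite abelian group $\tM/H$.
\item So the irreducible $\pi_2$ is isomorphic to some $\pi_1\otimes\chi$, with $\chi$ trivial on $H\supset M$. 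Undoing the twist by $\mu$ yields $(1)$.
\end{itemize}

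The main obstacle is justifying the finiteness and openness of $\tM/H$, together with the semisimplicity statements. This is where characteristic zero is used: it makes $(F^\times)^n$ of finite index. Once $\tM/H$ is known to be finite abelian, the remaining steps are formal Clifford theory.
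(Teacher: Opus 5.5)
The paper gives no proof of its own here; it simply quotes Tadić's Corollary 2.5. Your argument is correct and is essentially the standard Gelbart–Knapp/Tadić Clifford-theory route: pass to the open normal subgroup $H=ZM$ of finite index in $\tM$, then combine Frobenius reciprocity with $\Ind_H^{\tM}(\pi_{1|H})\simeq\bigoplus_\chi \pi_1\otimes\chi$.

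One step deserves a word. The embedding $\pi_2\hookrightarrow\Ind_H^{\tM}\tilde\sigma$ needs $\tilde\sigma$ to be a quotient of $\pi_{2|H}$, not just a subrepresentation. This holds because $\pi_{2|H}$ is semisimple, as you showed in $(3)\Rightarrow(2)$. Alternatively, since $H$ is open of finite index, the other adjunction gives a surjection $\Ind_H^{\tM}\tilde\sigma\twoheadrightarrow\pi_2$. That also suffices, because $\Ind_H^{\tM}\tilde\sigma$ sits inside the semisimple representation $\bigoplus_\chi\pi_1\otimes\chi$.
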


\bigskip

\begin{rema} The characters of $\widetilde{M}$ trivial on $M$ are exactly the characters of the form $\eta \circ \det$ with $\eta \in \wF$. In the following, if no confusion is possible, we will write $\eta$ for $\eta \circ \det$. \end{rema}

\bigskip

\subsection{Structure of a discrete series of $\tM$ restricted to $M$}

Let $\pi$ be a discrete series of a Levi subgroup $\tM$ of $\tG$. Define $X(\pi)$ as in Definition \ref{selftorsion} with $\tM$ in place of $\GL(m,F)$. We state without proof some results on the discrete series of $\tM$ restricted to $M$ and, in particular, a result on the length of $\pi_{|M}$ that depends on the group $X(\pi)$.

\bigskip

\begin{prop}[\cite{tadic}, Lemma 2.1] Let $\pi\in \dtM$. The restriction of $\pi$ to $M$ is a finite direct sum of irreducible representations of $M$. Moreover, the multiplicity of each subrepresentation is the same. We have $$\pi_{|M} \simeq m_\pi \! \bigoplus_{\tau \in \OO_{\widetilde{M}}(\pi)} \tau$$ where $m_\pi$ is that ``global'' mutiplicity. \end{prop}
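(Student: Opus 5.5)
This statement is cited from \cite{tadic} and can be taken as given. If I were to reprove it, I would use Clifford theory for the pair $M \subset \tM$ in three steps.

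First, I would reduce to a normal subgroup of finite index. Let $Z$ be the centre of $\tM$. Then $H := Z M$ is normal in $\tM$: it contains $M = \tM \cap \SL(n,F)$, which is the kernel of $\det$, and it has finite index. Indeed, $\tM/ZM \simeq F^\times/\det(Z)F^{\times}$-type quotient, and $\det(Z)$ contains $(F^\times)^n$, which has finite index in $F^\times$ since $F$ has characteristic zero. Since $Z$ acts on $\pi$ by a character (Schur's lemma for smooth irreducible representations), restriction to $M$ and restriction to $H$ have the same decomposition, so it suffices to decompose $\pi_{|H}$. In fact $\tM/M \simeq \det(\tM)=F^\times$ is abelian, so we may directly work with the finite abelian quotient $\tM/H$.

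Second, I would establish finite length and semisimplicity. For a smooth irreducible $\pi$ and an open normal subgroup $H$ of finite index, pick a nonzero vector $v$. Its $H$-span $V_0$ has a maximal proper $H$-subrepresentation by finite generation and Zorn's lemma, giving an irreducible quotient. Standard Clifford arguments then show that $\pi_{|H}$ is a finite sum of $\tM$-translates of an irreducible $H$-representation $\tau$. The key point is that $\sum_{g\in \tM/H} g V_0$ is $\tM$-stable and hence everything, and that a finite sum of irreducibles is semisimple. Reducing from $ZM$ down to $M$ is then immediate.

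Third, I would prove equal multiplicities. Conjugation by $g\in\tM$ permutes the isotypic components of $\pi_{|M}$, and $\tM$ acts transitively on them by irreducibility of $\pi$. The conjugate ${}^g\tau$ occurs in $\pi_{|M}$ with the same multiplicity as $\tau$, since $\pi(g)$ intertwines $\pi_{|M}$ with $({}^g\pi)_{|M}$. Hence every $\tau \in \OO_{\tM}(\pi)$ appears with a common multiplicity $m_\pi$, and these $\tau$ are exactly the $\tM$-conjugates of one constituent. This yields
$\pi_{|M}\simeq m_\pi\bigoplus_{\tau\in\OO_{\tM}(\pi)}\tau$.

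The main obstacle is the semisimplicity and finiteness in the second step for infinite-dimensional smooth representations. I would handle it by noting that $\tM/ZM$ is finite and applying the standard lemma of Gelbart--Knapp (or Bernstein--Zelevinsky) for finite-index open normal subgroups.
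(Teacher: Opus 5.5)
Your overall route is the standard argument behind the result the paper cites: replace $M$ by the open, normal, finite-index subgroup $H=Z_{\tM}M$ and run Clifford theory. This is what Tadi\'c (following Gelbart--Knapp) does. The paper itself gives no proof, only the citation.

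However, the one step you spell out, semisimplicity in your second paragraph, does not work as written. You obtain an irreducible \emph{quotient} of the cyclic module $V_0$. You then treat $\sum_{g}\pi(g)V_0$ as a finite sum of irreducible \emph{subrepresentations}, but the $\pi(g)V_0$ are not known to be irreducible, so nothing follows about semisimplicity.

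The fix is the dual argument. Since $\tM/H$ is finite, $\pi_{|H}$ is finitely generated by the $\pi(g)v$. Zorn's lemma then gives a maximal proper $H$-subrepresentation $W$ of all of $\pi$. The intersection $\bigcap_{g\in\tM/H}\pi(g)W$ is $\tM$-stable, because $H$ is normal, and it is proper, so it is zero by irreducibility of $\pi$. Hence $\pi_{|H}$ embeds into $\bigoplus_{g\in\tM/H}\pi/\pi(g)W$, which is a finite direct sum of the $\tM$-conjugates of the irreducible representation $\tau=\pi/W$. A subrepresentation of a finite-length semisimple representation is semisimple of finite length. This gives your second step and the conjugacy claim of your third step at once; passing from $H$ to $M$ is then harmless, as you say.

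You also skip one check. The paper defines $\OO_{\tM}(\pi)$ as a subset of $\mathcal{E}^2(M)$, so to reach the displayed formula you must show the constituents are discrete series of $M$. Each constituent is a direct summand of $\pi_{|M}$, so its matrix coefficients are restrictions of matrix coefficients of $\pi$. Moreover $M/Z_M$, with $Z_M=Z_{\tM}\cap M$, identifies with the open finite-index subgroup $MZ_{\tM}/Z_{\tM}$ of $\tM/Z_{\tM}$. Square-integrability modulo the centre therefore passes from $\pi$ to each constituent.
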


\bigskip

\begin{prop}[\cite{BKSLII}, Corollary 1.6 \& Remark (ii)]\label{cyclic}
 If $X(\pi)$ is cyclic, then $\pi_{|M}$ is multiplicity free; that is to say, $m_\pi =1$.\end{prop}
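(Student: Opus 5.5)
The plan is to prove the statement by Clifford theory for the pair $M \subset \tM$, reducing multiplicity one to the commutativity of a twisted group algebra of $X(\pi)$. The proof rests on the observation that $\tM/M \simeq F^\times$ via $\det$ is abelian. By the preceding proposition, $\pi_{|M} \simeq m_\pi \bigoplus_{\tau} \tau$ with finitely many $\tau$. Hence $A := \mathrm{End}_M(\pi)$ is finite-dimensional, and $A \simeq \prod_\tau \mathrm{M}_{m_\pi}(\C)$. So $m_\pi = 1$ if and only if $A$ is commutative. It therefore suffices to show that $A$ is commutative when $X(\pi)$ is cyclic.

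\emph{Step 1: decompose $A$ under $\tM$.} The group $\tM$ acts on $A$ by $g\cdot T = \pi(g)\, T\, \pi(g)^{-1}$, and $M$ acts trivially, so this is a finite-dimensional representation of the abelian group $\tM/M$. To see that it is a sum of characters, take the open subgroup $H \supseteq M$ of $\tM$ stabilizing each isotypic component $\tau$. This subgroup has finite index, since $\tM$ permutes the finitely many $\tau$ and the stabilizers are open, containing $M Z(\tM)$ times a compact open subgroup. On each isotypic block $\mathrm{M}_{m_\pi}(\C)$, the group $H$ acts by automorphisms. These are inner, given by elements normalizing $\tau^{\oplus m_\pi}$, and commute with $M$. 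One must then check that the action of the abelian group $\tM/M$ is semisimple. I would argue via a finite quotient: since $\pi$ is admissible and $A$ is spanned by operators commuting with a fixed open compact modulo $M$, the action factors through a finite abelian quotient of $\tM/M$ (after noting that the center acts trivially). Granting this, we get
\[
A = \bigoplus_{\mu} A_\mu, \qquad A_\mu = \{T : \pi(g) T = \mu(g) T \pi(g)\} = \mathrm{Hom}_{\tM}(\pi, \pi\otimes\mu^{-1}).
\]
By Schur's lemma, $A_\mu$ is one-dimensional if $\mu^{-1}\in X(\pi)$ and zero otherwise.

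\emph{Step 2: identify the algebra structure.} Choose nonzero $I_\mu \in A_\mu$ for each $\mu \in X(\pi)$. Then $I_\mu I_\nu \in A_{\mu\nu}$, so $I_\mu I_\nu = c(\mu,\nu) I_{\mu\nu}$ with $c$ a $\C^\times$-valued $2$-cocycle on $X(\pi)$. Hence $A \simeq \C^{c}[X(\pi)]$ is a twisted group algebra. Note that $X(\pi)$ is finite, since $A$ is finite-dimensional.

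\emph{Step 3: conclude.} If $X(\pi)$ is cyclic with generator $\mu_0$, rescale $I_{\mu_0^j} := I_{\mu_0}^j$ for $0\le j<|X(\pi)|$. Equivalently, use $H^2(\Z/N,\C^\times)=0$. With this normalization, $A$ is generated by the single element $I_{\mu_0}$, so it is commutative. Therefore $m_\pi=1$.

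The main obstacle I expect is Step 1, namely justifying that the conjugation action of $\tM/M$ on $A$ is diagonalizable by characters. This requires controlling continuity and passing to a finite quotient, for instance by using that the central character makes $Z(\tM)$ act trivially and that $M Z(\tM)$ has finite index in the stabilizer subgroup. Steps 2 and 3 are then formal.
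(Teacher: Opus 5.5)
Your argument is correct. The paper gives no proof of its own here; it imports the statement from Bushnell--Kutzko. What you have written is the standard Clifford-theoretic mechanism behind such results: $\mathrm{End}_M(\pi)$ is a twisted group algebra $\C^{c}[X(\pi)]$, $m_\pi=1$ exactly when this algebra is commutative, and cyclicity of $X(\pi)$ forces commutativity.

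The step you flag as the main obstacle is actually easy, and the appeal to admissibility and open compact subgroups is not needed (as worded, it is also not a valid justification). The center $Z(\tM)$ acts on $\pi$ by scalars, so it acts trivially on $A$ by conjugation. Moreover, $\det$ identifies $\tM/Z(\tM)M$ with $F^\times/(F^\times)^d$, where $d=\gcd(n_1,\dots,n_r)$. This group is finite because $F$ is a $p$-adic field. Hence the conjugation action factors through a finite abelian group and is automatically diagonalizable, with no continuity issues.

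Compared with the bare citation, your route is self-contained. It also makes transparent why cyclicity is the relevant hypothesis, namely that $H^2(\Z/N,\C^\times)=0$. The citation is more economical, and according to the paper the same source also provides the length formula used in the next proposition.
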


\bigskip

\begin{prop}[[\cite{BKSLII}, Corollary 1.6 \& Remark (ii)]\label{length} Let $\pi \in \dtM$ and assume $\pi_{|M}$ is multiplicity free. Then, the length of $\pi_{|M}$ is equal to the index in $\tM$ of the group $\mathcal{T}(\pi)$, defined by $\mathcal{T}(\pi) =~\bigcap_{\mu \in X(\pi)} \ker ( \mu \circ \det)$.  
\end{prop}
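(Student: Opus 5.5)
The plan is to use Clifford theory for the normal subgroup $M \subset \tM$ together with a dimension count of $\mathrm{End}_M(\pi)$. If $\pi_{|M}$ is multiplicity free, then by the decomposition $\pi_{|M} \simeq m_\pi \bigoplus_{\tau \in \OO_{\tM}(\pi)} \tau$ with $m_\pi = 1$ and Schur's lemma, we get $\dim \mathrm{End}_M(\pi) = |\OO_{\tM}(\pi)|$, which is the length of $\pi_{|M}$. So it suffices to prove two equalities:
\begin{equation*}
\dim \mathrm{End}_M(\pi) = |X(\pi)| \qquad\text{and}\qquad |X(\pi)| = [\tM : \mathcal{T}(\pi)].
\end{equation*}

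For the first equality, I would pass through the intermediate subgroup $\tM' = Z(\tM)\,M$. The center $Z(\tM)$ acts on $\pi$ by a character, so any $M$-endomorphism of $\pi$ commutes with $Z(\tM)$. Hence $\mathrm{End}_M(\pi) = \mathrm{End}_{\tM'}(\pi)$.

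Next, $\det$ induces a surjection $\tM/M \to F^\times$, and under it $\tM'$ maps onto a subgroup containing the image of the scalars. This image contains $F^{\times N}$ for a suitable $N$ (for instance $N = n$, or the lcm of the block sizes). Since $F$ has characteristic zero, $F^\times/F^{\times N}$ is finite, so $\tM/\tM'$ is a finite abelian group, and $\tM'$ is open and normal of finite index. Frobenius reciprocity for an open normal subgroup of finite index then gives
\begin{equation*}
\mathrm{End}_{\tM'}(\pi) \simeq \mathrm{Hom}_{\tM}\bigl(\pi, \pi \otimes \Ind_{\tM'}^{\tM} 1\bigr) = \bigoplus_{\mu} \mathrm{Hom}_{\tM}(\pi, \pi \otimes \mu),
\end{equation*}
where $\mu$ runs over the characters of the finite abelian group $\tM/\tM'$. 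By the remark above, these are characters $\mu\circ\det$ with $\mu \in \wF$.

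Each summand has dimension $1$ or $0$ by Schur's lemma, according as $\mu \in X(\pi)$ or not. I also need every $\mu \in X(\pi)$ to be trivial on $\tM'$. This follows by comparing central characters: $\pi \otimes \mu \simeq \pi$ forces $\mu \circ \det$ to be trivial on $Z(\tM)$, and it is trivial on $M$ by definition. Hence $\dim \mathrm{End}_M(\pi) = |X(\pi)|$, and in particular $X(\pi)$ is finite.

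For the second equality, $X(\pi)$ is a finite group of characters of the abelian group $\tM/M \simeq F^\times$. The group $\mathcal{T}(\pi)$ is the common kernel of these characters. Pontryagin duality for the finite quotient $\tM/\mathcal{T}(\pi)$ identifies its character group with $X(\pi)$: a character of $\tM/M$ that is trivial on $\mathcal{T}(\pi)$ lies in the group generated by $X(\pi)$, which is $X(\pi)$ itself. Therefore $[\tM : \mathcal{T}(\pi)] = |X(\pi)|$.

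The step I expect to be the main obstacle is the Frobenius-reciprocity identification. It requires care with smooth induction from an open finite-index subgroup, and with checking that every $\mu\circ\det$ with $\pi\otimes\mu \simeq \pi$ really factors through $\tM/\tM'$. I would handle the latter by the central-character argument above: evaluate on $Z(\tM) \simeq (F^\times)^r$ and note that $\mu\circ\det$ restricted to the $i$-th scalar factor is $z \mapsto \mu(z)^{n_i}$, which must be trivial.
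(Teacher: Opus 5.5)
Your argument is correct. It also does something the paper does not: the paper gives no proof of this statement and simply imports it from Bushnell--Kutzko (\cite{BKSLII}, Corollary~1.6, stated for $\GL(N)\supset\SL(N)$, with Remark~(ii) covering Levi subgroups).

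What you give instead is a self-contained Clifford-theoretic proof in the spirit of Gelbart--Knapp. You pass to the open normal subgroup $\tM'=Z(\tM)M$ of finite index and prove $\dim\mathrm{End}_M(\pi)=|X(\pi)|$. Your central-character argument, giving $\mu^{n_i}=1$ for all $i$, is exactly what makes $X(\pi)$ finite and each $\mu\circ\det$ with $\mu\in X(\pi)$ trivial on $\tM'$. You then turn $|X(\pi)|$ into $[\tM:\mathcal{T}(\pi)]$ by duality for the finite abelian group $\tM/\mathcal{T}(\pi)\simeq F^\times/\bigcap_{\mu\in X(\pi)}\ker\mu$.

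The step you flag as delicate is fine. $\tM'$ contains $\det^{-1}(F^{\times n})$, which is open of finite index because $F^{\times n}$ is open of finite index in $F^\times$ in characteristic zero. You can even avoid induction altogether: let the finite abelian group $\tM/\tM'$ act on $\mathrm{End}_{\tM'}(\pi)$ by $A\mapsto\pi(g)A\pi(g)^{-1}$. Its isotypic pieces are precisely the spaces $\mathrm{Hom}_{\tM}(\pi,\pi\otimes\mu)$, with $\mu$ running over the characters of $\tM/\tM'$.

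As for what each route buys, the citation is short, but the Levi case then rests on a remark in \cite{BKSLII}. Your argument uses only the irreducibility of $\pi$ and the finite decomposition of $\pi_{|M}$, not that $\pi$ is a discrete series. Without the multiplicity-free hypothesis it gives the general identity $m_\pi^2\,|\OO_{\tM}(\pi)|=|X(\pi)|$. The same decomposition also shows that $\mathrm{End}_M(\pi)=\bigoplus_{\mu\in X(\pi)}\mathrm{Hom}_{\tM}(\pi,\pi\otimes\mu)$ is a twisted group algebra of $X(\pi)$. When $X(\pi)$ is cyclic this algebra is generated by a single invertible intertwiner, hence commutative, so you also recover Proposition~\ref{cyclic}.

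Your duality step also makes clear that $X(\pi)=\{1\}$ gives $\mathcal{T}(\pi)=\tM$, not $M$ as written in the proof of the Corollary at the end of Section~\ref{2}. That is what actually yields length one there.
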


\bigskip

\subsection{The group $W_\OO$ and its action on $\pi_\sigma$}

\subsubsection{Characters of $M$ and $\tM$.}

\bigskip

 \begin{prop}\label{character}
Let $\chi$ be a unitary unramified character of $M$, then $\chi$ can be expressed as  \begin{equation}\chi \; : \;M \ni \diag (g_1, \dots , g_r) \longmapsto \prod_{k = 1}^r \; z_k^{\mathrm{val}(\mathrm{det} g_i)}\end{equation} with $z_1, \dots z_r \in \mathbb{S}^1$.
\end{prop}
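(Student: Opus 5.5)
The plan is to use the structure of the unramified characters of $\tM$ and the determinant condition defining $M$. Let $\chi$ be a unitary unramified character of $M$. Since $M$ is a closed subgroup of $\tM$ and $\tM/M \simeq F^\times$ via $\det$, the first step is to extend $\chi$ to a unitary unramified character of $\tM$, and then to read off the explicit shape of such characters on $\tM \simeq \GL(n_1,F)\times\dots\times\GL(n_r,F)$.

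Concretely, I would argue as follows.

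\emph{Step 1: restriction to $M_0$.} Let $M^0 = \bigcap_\mu \ker|\mu\circ\det|$, the intersection taken over rational characters. Equivalently, $M^0 = \{\diag(g_1,\dots,g_r) : \val(\det g_i)=0 \text{ for all } i\} \cap M$. Call this $M_0$; it contains every compact subgroup of $M$. By definition an unramified character is trivial on $M_0$, so $\chi$ factors through the quotient $M/M_0$.

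\emph{Step 2: identify the quotient.} Consider the map $M \to \Z^r$, $\diag(g_1,\dots,g_r)\mapsto (\val\det g_1,\dots,\val\det g_r)$. Its image is the lattice $L=\{(a_i)\in\Z^r : \sum a_i=0\}$, since $\prod\det g_i=1$ forces $\sum a_i=0$. Conversely, every element of $L$ is attained: put $\varpi^{a_i}$ in one diagonal entry of block $i$ and $1$ elsewhere. The kernel of this map is exactly $M_0$. Hence $M/M_0\simeq L$, a free abelian group of rank $r-1$.

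\emph{Step 3: extend the character.} A unitary character of $L$ extends to a unitary character of $\Z^r$. This holds because $L$ is a direct summand of $\Z^r$ (the quotient $\Z^r/L\simeq\Z$ via the sum is free), or alternatively because $\mathbb{S}^1$ is divisible. A character of $\Z^r$ has the form $(a_i)\mapsto\prod z_i^{a_i}$ with $z_i\in\mathbb{S}^1$. Composing with the map of Step 2 gives the claimed formula, with the exponent $\val(\det g_k)$ in the $k$-th factor.

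\emph{Non-uniqueness.} The tuple $(z_1,\dots,z_r)$ is only determined up to a common scalar $z$, since $\prod z^{a_i}=1$ on $L$. This matches twisting by the unramified characters $\eta\circ\det$ of $\tM$. This point is worth recording for later use, but it is not needed for the statement itself.

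The only step requiring real care is Step 2: the equality $\ker = M_0$, i.e. that the unramified characters of $M$ are exactly the characters trivial on the kernel of the valuation map. This depends on the definition of unramified characters of $M$ as products of $|\mu|^{s}$ for rational characters $\mu$ of $M$. The rational characters of $M=\tM\cap\SL(n)$ are generated by the restrictions of $\det g_i$ modulo the relation $\prod\det g_i=1$. So the group generated by the $|\mu|$ is precisely the group of characters of $L$, which settles the equality.
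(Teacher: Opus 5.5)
Your proof is correct, but it takes a different route from the paper's. The paper argues with explicit generators. Since $\chi$ kills transvections (they lie in the derived group), row and column reduction inside each block reduces $\diag(g_1,\dots,g_r)$ to a diagonal matrix carrying $\det g_i$ in one entry of block $i$. This matrix is factored as a product of the one-parameter elements $A_i(\det g_i)$, and each $x\mapsto\chi(A_i(x))$ is an unramified character of $F^\times$, hence equal to $z_i^{\val(x)}$.

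You instead work with the quotient. You identify $M/M^0$ with the lattice $L=\{(a_i)\in\Z^r : \sum a_i=0\}$ via $g\mapsto(\val\det g_i)_i$, and you get the $z_i$ by extending a unitary character of $L$ to $\Z^r$, using that $L$ is a direct summand (or that $\mathbb{S}^1$ is divisible).

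The paper's argument is more hands-on. It produces the $z_i$ explicitly as $\chi(A_i(\varpi_F))$. It needs nothing about $X^*(M)$ beyond the fact that $\chi$ is trivial on the compact elements $A_i(u)$, $u\in\OO_F^\times$. Your argument avoids the row reduction and the factorization into the $A_i$. It also gives at once the two facts the paper proves right afterwards: the extension of $\chi$ to an element of $\XtMnru$ (Corollary \ref{characterextended}), and the non-uniqueness of $(z_1,\dots,z_r)$ up to a common scalar, i.e. $\chinru(M)\cong(\mathbb{S}^1)^r/\mathbb{S}^1$.

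One minor point. For the statement you only need the easy direction, namely that an unramified character is trivial on the kernel of the valuation map. The converse in your last paragraph is not needed. As written it is also slightly off: the characters of $L$ are the $\prod_j|\mu_j|^{s_j}$ with $s_j\in\mathbb{C}$, not the group generated by the $|\mu|$ alone.
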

 
 \bigskip
 
 To prove this proposition, we first need the following lemma:
 
 \medskip 
 
\begin{lemm} 
Let $g \in \GL(n_i,F)$ be a transvection matrix, then $$\chi(\diag(1, \dots,1, g,1, \dots, 1))=1.$$
\end{lemm}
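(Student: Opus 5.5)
Let $\chi$ be a unitary unramified character of $M$, let $g\in\GL(n_i,F)$ be a transvection, and write $\iota_i(g)=\diag(1,\dots,1,g,1,\dots,1)$, with $g$ in the $i$-th block. The plan is to show that $\iota_i(g)$ is a commutator in $M$, or more simply that it lies in the subgroup generated by elements of $M$ on which every unramified character is trivial. The cleanest route uses the definition of an unramified character: $\chi$ is trivial on the subgroup $M^0=\bigcap_{\psi}\ker|\psi|_F$, where $\psi$ runs over the $F$-rational characters of $M$. So it suffices to show that $\iota_i(g)\in M^0$.

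First I will note that $\iota_i(g)$ lies in $M$, since $\det g=1$. Next, take any $F$-rational character $\psi$ of $M$. Because $\iota_i(g)$ is unipotent, the one-parameter family $t\mapsto\iota_i(g_t)$, where $g_t=1+t(g-1)$, is a homomorphism from the additive group $\mathbb{G}_a$ into $M$. Algebraic characters are trivial on unipotent elements, since there is no nontrivial algebraic homomorphism $\mathbb{G}_a\to\mathbb{G}_m$. Hence $\psi(\iota_i(g))=1$, so $|\psi(\iota_i(g))|_F=1$ and $\iota_i(g)\in M^0$, which gives $\chi(\iota_i(g))=1$.

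If one wants to avoid the structure theory, there is an elementary alternative: exhibit $\iota_i(g)$ as a commutator of elements of $M$, and use that $\chi$ takes values in the abelian group $\mathbb{S}^1$.

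1. Conjugate within $\GL(n_i,F)$ to reduce to the elementary matrix $g=1+aE_{12}$. This is allowed because all transvections are conjugate in $\GL(n_i,F)$.
2. Put $h=\diag(t,1,\dots,1)\in\GL(n_i,F)$. Then $hgh^{-1}g^{-1}=1+(t-1)aE_{12}$, and every transvection is obtained this way by choosing $t$ suitably.
3. Handle the issue that $\iota_i(h)\notin M$ in general. Compensate the determinant in another block $j\neq i$ by replacing $h$ with $\diag(\dots,h,\dots,(\det h)^{-1}\text{ in block } j,\dots)$; the correction commutes with $\iota_i(g)$. If $r=1$, compensate inside the same block instead, using $h=\diag(t,1,\dots,1,t^{-1})$, which requires $n_i\ge 3$; if $n_i=2$, use $\diag(t,t^{-1})$, whose commutator gives $1+(t^2-1)aE_{12}$.

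Since $\chi$ is a homomorphism into an abelian group, it is trivial on commutators, and the result follows.

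The main obstacle is this determinant bookkeeping in the degenerate case $r=1$ with $n_i=2$, that is $M=\SL(2)$. Here $t^2-1$ need not reach every scalar, but every element of $F^\times$ is a product of such values, because any $x\in F^\times$ can be written as $(t^2-1)$ for suitable $t$ up to products. I would therefore prefer the first, algebraic argument, which avoids these cases entirely.
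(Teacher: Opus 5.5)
Your main argument is correct, but it takes a different route from the paper's.

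\textbf{The paper's route.} The paper does not use unramifiedness in this lemma. It observes that any character of $M$, being a homomorphism into an abelian group, is trivial on the derived group $M_{\mathrm{der}}\cong \SL(n_1,F)\times\dots\times\SL(n_r,F)$. The embedded transvection lies in $M_{\mathrm{der}}$ because $\det g=1$.

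\textbf{Your route.} You use the defining property of unramified characters, namely triviality on $M^0=\bigcap_\psi\ker|\psi|_F$. You combine it with the fact that $F$-rational characters are trivial on unipotent elements. One could also note that $\iota_i(g)$ lies in a compact subgroup of $M$, hence in $M^0$.

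\textbf{Comparison.} Your argument avoids having to know that each $\SL(n_i,F)$ sits inside $[M,M]$, which ultimately rests on $\SL(n_i,F)$ being generated by transvections that are commutators. The cost is relying on the $M^0$-definition of unramified characters, which the paper never states. The paper's argument is more general: it kills transvections for every character of $M$, not only unramified ones.

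\textbf{Your elementary fallback.} This is really the paper's argument with the inclusion $\SL(n_i,F)\subset[M,M]$ made explicit, and it works. The obstacle you single out at the end is not one, because $a$ is a free parameter. Take $h=\diag(t,t^{-1})$ and $u=1+b(t^2-1)^{-1}E_{12}$ for any $t$ with $t^2\neq 1$; then $huh^{-1}u^{-1}=1+bE_{12}$. No statement about products of values $t^2-1$ is needed. Moreover, $r=1$ means $M=G=\SL(n,F)$, which has no nontrivial characters at all.

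\textbf{A small point in step 1.} The conjugating element in $\GL(n_i,F)$ need not lie in $M$, so you cannot directly invoke invariance of $\chi$ under it. This is harmless: conjugating a commutator $huh^{-1}u^{-1}$ by $x$ gives the commutator of $xhx^{-1}$ and $xux^{-1}$. These have the same determinants as $h$ and $u$, so the determinant compensation in another block still applies.
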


\bigskip

\begin{proof} The derived group of $M$ is $M_\mathrm{der} := \{ghg^{-1}h^{-1}, \; g,h \in M\}$ and is isomorphic to $\SL(n_1) \times \dots \times~\SL(n_r)$. By definition, the character $\chi$ is trivial on $M_\mathrm{der}$. Since $g$ is assumed to be a transvection matrix, we have $\det(g)=1$. Therefore $\diag(1, \dots,1, g,1, \dots, 1)$ is in $M_\mathrm{der}$, so $\chi(\diag(1, \dots,1, g,1, \dots, 1))=1$.
\end{proof}

\begin{proof}[Proof of Proposition \ref{character}] Let $\chi \in \XMnru$ and let $ \diag (g_1, \dots , g_r) \in M$. Thanks to the previous lemma, using row and column reduction, we may write \begingroup\setlength{\arraycolsep}{1,5pt}\renewcommand{\arraystretch}{0.8}  $$ \begin{pmatrix} g_1 &&& \\ & g_2 && \\ && \ddots & \\ &&& g_r \end{pmatrix} =L\begin{pNiceMatrix} \Block[borders={bottom,right,tikz=dotted}]{2-2}{}I_{n_1-1} &&&& \\ & \det(g_1) &&& \\ && \ddots && \\ &&& \Block[borders={top, left,tikz=dotted}]{2-2}{} I_{n_r-1} & \\ &&&& \det(g_r) \end{pNiceMatrix}U$$ \endgroup where $L$ and $U$ are a product of transvection matrices of $M$, hence \begingroup\setlength{\arraycolsep}{1,5pt}\renewcommand{\arraystretch}{0.8} $$ \chi\left(\begin{pmatrix} g_1 &&& \\ & g_2 && \\ && \ddots & \\ &&& g_r \end{pmatrix}\right) =
\chi\left(\begin{pNiceMatrix} \Block[borders={bottom,right,tikz=dotted}]{2-2}{}I_{n_1-1} &&&& \\ 
& \det(g_1) &&& \\ 
&& \ddots && \\ 
&&& \Block[borders={top, left,tikz=dotted}]{2-2}{} I_{n_r-1} & \\ &&&& \det(g_r) \end{pNiceMatrix}\right).$$ \endgroup We can write the matrix on the right hand side as a product of diagonal matrices in $M$, denoted by $A_i(\det(g_i))$ and given by \begingroup\setlength{\arraycolsep}{1,2pt}\renewcommand{\arraystretch}{0.7}  $$A_i(\det(g_i)) = \begin{pmatrix} \ddots && \\ & \det(g_i)& \\&& \ddots & \\ &&& \det(g_i)^{-1} \end{pmatrix}$$ \endgroup where all diagonal entries are equal to 1 except at the bottom of the block $n_i$ and the last term of the diagonal. Hence, \begingroup\setlength{\arraycolsep}{1,5pt}\renewcommand{\arraystretch}{0.8} $$ \chi\left(\begin{pmatrix} g_1 &&& \\ & g_2 && \\ && \ddots & \\ &&& g_r \end{pmatrix}\right) = \prod_{i=1}^r A_i(\det(g_i)).$$ \endgroup Consider the following map: $$\begin{array}{ccll} \chi_i : & F^\times & \longrightarrow & \S^1 \\ &  x & \longmapsto & \chi(A_i(x))\end{array}.$$ The character $\chi_i$ is an unramified character of $F^\times$, therefore there exists $z_i \in \S^1$ such that $\chi_i(x)=~z_i^{\mathrm{val}(x)}$. Since $\chi( \diag(g_1, \dots,g_r)) = \chi(A_1(\det(g_1)) \dots \chi(A_r(\det(g_r))$, there exist $z_1, \dots, z_r \in \S^1$ such that \begin{equation*}\chi( \diag(g_1, \dots,g_r)) =  \prod_{i=1}^r z_i^{\mathrm{val}(\det(g_i))}. \tag*{\qedhere}\end{equation*}  
\end{proof}

\bigskip

A similar argument proves that any unitary unramified character $\lambda$ of $\tM$ can be written, in an unique way, as \begin{equation}\lambda  \; : \; \tM \ni \diag (g_1, \dots , g_r) \longmapsto \prod_{k = 1}^r \; z_k^{\mathrm{val}(\mathrm{det} g_i)}\end{equation} with $z_1, \dots, z_r \in \S^1$.

\bigskip

\begin{coro}\label{characterextended} Let $\chi \in \XMnru$. There exists $\widetilde{\chi} \in \XtMnru$ such that $\widetilde{\chi}_{|M} = \chi$. 
 \end{coro}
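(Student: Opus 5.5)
The plan is to use the explicit description of unitary unramified characters of $M$ from Proposition \ref{character} and extend them by hand, block by block. The key inputs are that each $z_i$ is an arbitrary element of $\S^1$, and that an element of $M$ has its block determinants constrained only by $\prod_i \det(g_i) = 1$.

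First, take the parameters $z_1, \dots, z_r \in \S^1$ attached to $\chi$ by Proposition \ref{character}. Define
$$\widetilde{\chi}\colon \tM \ni \diag(g_1,\dots,g_r) \longmapsto \prod_{i=1}^r z_i^{\val(\det g_i)}.$$
Then I check that $\widetilde{\chi}$ is a unitary unramified character of $\tM$:
\begin{enumerate}
\item each factor $g_i \mapsto z_i^{\val(\det g_i)}$ is a homomorphism $\GL(n_i,F) \to \S^1$, being the composite of $\det$, $\val$ and $t \mapsto z_i^t$;
\item it is trivial on the subgroup generated by compact subgroups, since compact subgroups have units as determinants;
\item it takes values in $\S^1$, so it is unitary.
\end{enumerate}
This matches the general form of elements of $\XtMnru$ displayed just before the corollary.

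The restriction $\widetilde{\chi}_{|M}$ then agrees with $\chi$ by construction. Both are given by the same formula on $\diag(g_1,\dots,g_r)$ with $\prod_i \det g_i = 1$, using the expression of $\chi$ obtained in Proposition \ref{character}.

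The only step needing care is that the $z_i$ in Proposition \ref{character} are well defined. They are defined there as $z_i = \chi(A_i(\varpi_F))$, where $\chi_i(x) = \chi(A_i(x))$ is unramified. This requires $A_i(x)$ to lie in $M$; the entries $x$ and $x^{-1}$ give determinant $1$, so it does, noting that when $i = r$ the matrix is trivial. Uniqueness of the $z_i$ is not needed; only their existence is.

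If one prefers an abstract route, an alternative is also available. Since $\tM/M \simeq F^\times$ via $\det$, one could extend $\chi$ from the open finite-index-free subgroup using divisibility of $\S^1$. However, the unramified and unitary conditions are then less transparent, so I will use the explicit construction.
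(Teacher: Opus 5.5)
Your proof is correct and follows the paper's argument. The paper likewise takes the parameters $z_1,\dots,z_r\in\S^1$ from Proposition~\ref{character} and notes that the same formula $\prod_i z_i^{\val(\det g_i)}$ defines an element of $\XtMnru$, so the restriction map $\XtMnru\to\XMnru$ is surjective, with kernel the diagonal copy of $\S^1$. Your closing aside is imprecise ($M$ is closed but not open in $\tM$); its clean version would extend a character of $M/{}^0M\cong\{v\in\Z^r:\sum_i v_i=0\}$ to $\tM/{}^0\tM\cong\Z^r$, but since you do not rely on it, nothing is lost.
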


\bigskip 

 In fact, since for all $z \in \S^1$ and for any matrix $\diag(g_1, \dots , g_r)$ in $M$, we have $$\prod_{k = 1}^r \; (zz_k)^{\mathrm{val}(\mathrm{det} g_i)} = \prod_{k = 1}^r \; z_k^{\mathrm{val}(\mathrm{det} g_i)},$$ the morphism $$\begin{array}{cll}   \XtMnru & \longrightarrow & \XMnru \\   (z_1, \dots, z_r) & \longmapsto & \chi(\cdot)= \prod_{i=1}^r z_i^{\mathrm{val}(\det(\cdot))}\end{array}$$ is surjective and its kernel is $\{(z, z, \dots , z), \; z \in \S^1\}$. Hence, we obtain $$\chinru(M) \cong  (\mathbb{S}^1)^r / \mathbb{S}^1.$$ 
 
 \bigskip

\subsubsection{Description of $W_\OO$}

Let $\sigma \in \dM$ and choose $\pi_\sigma \in \dtM$ such that $\sigma \subset \pi_{\sigma|_M}$. This way we can decompose $\pi_\sigma$ as $$\pi_\sigma \simeq \pi_1 \otimes \dots \otimes \pi_r$$ where $\pi_i \in \mathcal{E}^2(\GL(n_i,F))$. We now describe $W_\OO$ and $W_\sigma$ more explicitly. For $W_\sigma$, this is a result of Goldberg \cite{goldberg}.
 
\bigskip

\begin{prop}[\cite{goldberg}, Lemma 2.3] Let $\sigma \in \dM$ and let $\pi_\sigma \in \dtM$ be such that $\sigma \subset \pi_{\sigma |M}$. Then  $$W_\sigma = \{w \in W(M), \exists \eta \in \wF  \st ^w\pi_\sigma \simeq \pi_\sigma \otimes (\eta \circ \det) \}.$$
\end{prop}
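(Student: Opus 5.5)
We must show: $w\in W(M)$ fixes $\sigma$ iff there is $\eta\in\wF$ with ${}^w\pi_\sigma\simeq\pi_\sigma\otimes(\eta\circ\det)$. The plan is to transport the question from $\sigma$ to the orbit $\OO_{\tM}(\pi_\sigma)$ and apply Proposition \ref{tadic}, then handle the finer question of which member of $\OO_{\tM}(\pi_\sigma)$ is fixed. Choose a representative $\dot w\in N_{\tG}(\tM)$ of $w$. We may take it in $N_G(M)$ after scaling one column by a scalar so that the determinant is $1$; this is possible since permutation matrices have determinant $\pm1$. Conjugation by $\dot w$ preserves both $M$ and $\tM$, and it commutes with restriction: $({}^w\pi_\sigma)_{|M}\simeq {}^w(\pi_{\sigma|M})$. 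Hence ${}^w\sigma$ is a subrepresentation of $({}^w\pi_\sigma)_{|M}$, i.e. ${}^w\sigma\in\OO_{\tM}({}^w\pi_\sigma)$, and ${}^w\pi_\sigma$ is again a discrete series of $\tM$.

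For the inclusion ``$\subseteq$'', suppose ${}^w\sigma\simeq\sigma$. Then $\sigma\in\OO_{\tM}(\pi_\sigma)\cap\OO_{\tM}({}^w\pi_\sigma)$, so by Proposition \ref{tadic} ((2)$\Rightarrow$(1)) there is $\eta\in\wF$ with ${}^w\pi_\sigma\simeq\pi_\sigma\otimes(\eta\circ\det)$, using the Remark that characters of $\tM$ trivial on $M$ are of the form $\eta\circ\det$. This direction is immediate.

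For the inclusion ``$\supseteq$'', suppose ${}^w\pi_\sigma\simeq\pi_\sigma\otimes\eta$. Restricting to $M$ gives ${}^w(\pi_{\sigma|M})\simeq\pi_{\sigma|M}$, so $w$ permutes the finite set $\OO_{\tM}(\pi_\sigma)$. This alone does not give ${}^w\sigma\simeq\sigma$, and this is the main obstacle: with $W_\sigma$ read literally as the stabilizer of the isomorphism class of $\sigma$, the statement is stronger than what Proposition \ref{tadic} yields. The orbit $\OO_{\tM}(\pi_\sigma)$ is a single $\tM$-conjugacy orbit, a torsor under $\tM/M\mathcal{T}$-type quotients, and $w$ could act on it by a nontrivial translation.

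I see two readings to check against Goldberg's paper. In the first, the intended statement is really that $W_\sigma$ stabilizes $\sigma$ up to $\tM$-conjugacy, equivalently up to the fibre $\OO_{\tM}(\pi_\sigma)$. Then the equivalence follows from the two directions above together with Proposition \ref{tadic} ((1)$\Leftrightarrow$(3)). In the second, the stabilizer is genuinely of $\sigma$, and one must adjust. Since $\dot w$ may be replaced by $\dot w\,m$ for any $m\in\tM$ normalizing $M$, the action of $w$ on $\OO_{\tM}(\pi_\sigma)$ can be composed with conjugation by any element of $\tM$. Because $\tM$ acts transitively on $\OO_{\tM}(\pi_\sigma)$, some coset representative fixes $\sigma$; concretely, pick $t\in\tM$ with ${}^{t\dot w}\sigma\simeq\sigma$.

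The subtle point is whether such a $t\dot w$ still lies in $N_G(M)$, i.e. whether $\det t$ can be adjusted. I would try first to show that the relevant $t$ can be chosen modulo $M\cdot Z(\tM)$ with arbitrary determinant class. This should hold because $\det(Z(\tM))\supseteq(F^\times)^{n_i}$-type subgroups cover $\tM/M\mathcal{T}(\pi_\sigma)$. If it fails, I would conclude that the statement must be interpreted as in the first reading, the $\tM$-conjugacy version, which is how Goldberg uses it.
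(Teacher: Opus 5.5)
\textbf{Verdict: there is a genuine gap.} Your inclusion $W_\sigma\subseteq\{w : {}^w\pi_\sigma\simeq\pi_\sigma\otimes(\eta\circ\det)\}$ is correct; it is exactly Proposition \ref{tadic} applied to $\pi_\sigma$ and ${}^w\pi_\sigma$. (The paper gives no proof and simply quotes Goldberg.) The reverse inclusion, however, is not proved, and the repair you sketch cannot work.

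\emph{Why the repair fails.} The action of $w$ on isomorphism classes of representations of $M$ does not depend on the representative. Two representatives in $N_G(M)$ differ by an element of $M$, which acts trivially on isomorphism classes, so there is no freedom to exploit.
\begin{itemize}
\item Writing ${}^{\dot w}\sigma\simeq{}^{t^{-1}}\sigma$ with $t\in\tM$, the class of $t$ modulo the stabilizer of $\sigma$ in $\tM$ is determined by $w$.
\item Asking that $t\dot w$ lie in $N_G(M)$ means $\det t=1$, i.e.\ $t\in M$. That is literally the statement ${}^w\sigma\simeq\sigma$, so the argument is circular.
\item Your heuristic that $\det Z(\tM)$ covers $\tM/\mathcal{T}(\pi_\sigma)$ is false in general. $Z(\tM)$ acts trivially on $M$ by conjugation and lies in $\mathcal{T}(\pi_\sigma)$, since $\eta^{n_i}=1$ for every $\eta\in X(\pi_\sigma)$. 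The claim would therefore force $\pi_{\sigma|M}$ to be irreducible.
\item Retreating to an ``up to $\tM$-conjugacy'' reading proves a weaker statement. The paper uses $W_\sigma$ as the genuine stabilizer, for instance in $W_\OO=W_{\sigma\otimes\lambda}$ at the start of Section \ref{2}.
\end{itemize}

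\emph{The missing idea.} You need an invariant that distinguishes $\sigma$ from the other constituents of $\pi_{\sigma|M}$ and is visibly preserved by $w$. The standard one is genericity.
\begin{enumerate}
\item Let $U\subset M$ be the upper triangular unipotent subgroup of $\tM$. Let $\psi_0$ be the standard nondegenerate character of $U$: a fixed nontrivial additive character applied to the sum of the superdiagonal entries inside the blocks.
\item Every discrete series of $\GL(n_i,F)$ is generic. By uniqueness of Whittaker models, the space of $\psi_0$-Whittaker functionals on $\pi_\sigma$ is one-dimensional. This is the same space as on $\pi_{\sigma|M}=m_\pi\bigoplus_\tau\tau$, so at most one isomorphism class of constituents is $\psi_0$-generic.
\item The constituents are $\tM$-conjugate and $\tM=\widetilde{T}M$, with $\widetilde{T}$ the diagonal torus. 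Hence some ${}^s\sigma$ with $s\in\widetilde{T}$ is $\psi_0$-generic.
\item We have ${}^w({}^s\sigma)\simeq{}^s({}^w\sigma)$, because $\dot w s\dot w^{-1}s^{-1}$ is diagonal of determinant $1$, hence in $M$. So you may assume $\sigma$ itself is $\psi_0$-generic.
\item Take for $\dot w$ the matrix permuting the blocks according to $w$. If its determinant is $-1$, multiply it by $-1$ on one block of odd size; this is a central element of $\tM$. The result lies in $N_G(M)$, normalizes $U$, and satisfies $\psi_0\circ\mathrm{Ad}(\dot w^{-1})=\psi_0$.
\item Then ${}^w\sigma$ is $\psi_0$-generic. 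It is also a constituent of $({}^w\pi_\sigma)_{|M}\simeq(\pi_\sigma\otimes(\eta\circ\det))_{|M}=\pi_{\sigma|M}$. Uniqueness in step 2 gives ${}^w\sigma\simeq\sigma$.
\end{enumerate}
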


\bigskip

For $W_\OO$, we can give a similar description involving a twist by a character of $F^\times$.

\bigskip

\begin{prop}\label{WTheta}
Let $\sigma \in \dM$ and let $\pi_\sigma \in \dtM$ be such that $\sigma \subset \pi_{\sigma |M}$. Then $$W_\OO = \{w \in W(M), \exists \eta \in \wF, \exists \widetilde{\chi} \in \XtMnru \st ^w\pi_\sigma \simeq \pi_\sigma \otimes \widetilde{\chi} (\eta \circ \det) \}.$$
\end{prop}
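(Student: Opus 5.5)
We prove the two inclusions separately. Throughout we use Proposition \ref{tadic}, Corollary \ref{characterextended} and the identity
$$\OO = \{\sigma \otimes \chi,\ \chi \in \XMnru\}.$$
We also use that the Weyl action is compatible with restriction: since $w$ is represented by a permutation matrix normalizing both $M$ and $\tM$, we have $({}^w\pi_\sigma)_{|M} \simeq {}^w(\pi_{\sigma|M})$. Hence ${}^w\sigma$ is a subrepresentation of $({}^w\pi_\sigma)_{|M}$. Similarly, if $\widetilde{\chi} \in \XtMnru$ restricts to $\chi$ on $M$, then $\sigma\otimes\chi$ is a subrepresentation of $(\pi_\sigma\otimes\widetilde{\chi})_{|M}$.

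For the inclusion ``$\supseteq$'', take $w$ with ${}^w\pi_\sigma \simeq \pi_\sigma \otimes \widetilde{\chi}(\eta\circ\det)$. Restricting to $M$, the factor $\eta\circ\det$ disappears. So ${}^w\sigma$ is a discrete series of $M$ contained in $(\pi_\sigma\otimes\widetilde{\chi})_{|M}$, i.e. ${}^w\sigma \in \OO_{\tM}(\pi_\sigma\otimes\widetilde{\chi})$. Put $\chi = \widetilde{\chi}_{|M} \in \XMnru$. Then $\sigma\otimes\chi$ also lies in $\OO_{\tM}(\pi_\sigma\otimes\widetilde{\chi})$.

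The set $\OO_{\tM}(\pi_\sigma\otimes\widetilde\chi)$ is the set of constituents of the restriction. These constituents are all conjugate under $\tM$, which acts on $\dM$ by conjugation. So we only get ${}^w\sigma \simeq {}^g(\sigma\otimes\chi)$ for some $g\in\tM$, and this is the main subtlety. It is resolved by working at the level of the orbit. By Proposition \ref{tadic} applied with $\pi_1=\pi_\sigma\otimes\widetilde{\chi}$, the representation ${}^w\sigma$ corresponds to the $\GL$-representation $\pi_\sigma\otimes\widetilde{\chi}$ up to a twist by $\eta$. Running the definition of $W_\OO$ through the lift $\pi_\sigma$, and using that $\tM$-conjugation commutes with twisting by unramified characters, this gives ${}^w\sigma\simeq\sigma'\otimes\chi$ for some $\sigma'$ in the $\tM$-class of $\sigma$. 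In the convention where elements of $\dM$ are considered up to $\tM$-conjugacy, as in \cite{goldberg}, this is exactly $w\in W_\OO$.

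I expect this identification to be the main obstacle. If the paper intends literal isomorphism rather than $\tM$-conjugacy, I would instead adapt Goldberg's Lemma 2.3 argument for $W_\sigma$, replacing $\sigma$ by the twisted representation $\sigma\otimes\chi$ and $\pi_\sigma$ by $\pi_\sigma\otimes\widetilde{\chi}$.

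For the inclusion ``$\subseteq$'', let $w\in W_\OO$, so ${}^w\sigma\simeq\sigma\otimes\chi$ with $\chi\in\XMnru$. By Corollary \ref{characterextended}, choose $\widetilde{\chi}\in\XtMnru$ extending $\chi$. Then ${}^w\sigma$ belongs to $\OO_{\tM}({}^w\pi_\sigma)$, and $\sigma\otimes\chi$ belongs to $\OO_{\tM}(\pi_\sigma\otimes\widetilde{\chi})$. These two sets therefore intersect. Proposition \ref{tadic}, (2)$\Rightarrow$(1), together with the Remark, then gives $\eta\in\wF$ with ${}^w\pi_\sigma\simeq\pi_\sigma\otimes\widetilde{\chi}(\eta\circ\det)$.
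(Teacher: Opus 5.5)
Your inclusion ``$\subseteq$'' is the paper's argument: extend $\chi$ by Corollary~\ref{characterextended}, note that ${}^w\sigma\simeq\sigma\otimes\chi$ lies in both $\OO_{\tM}({}^w\pi_\sigma)$ and $\OO_{\tM}(\pi_\sigma\otimes\widetilde\chi)$, and apply Proposition~\ref{tadic}. It is fine.

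\textbf{The inclusion ``$\supseteq$'' is not proved.} You are right that restricting ${}^w\pi_\sigma\simeq\pi_\sigma\otimes\widetilde\chi(\eta\circ\det)$ to $M$ only places ${}^w\sigma$ somewhere in the packet $\OO_{\tM}(\pi_\sigma\otimes\widetilde\chi)=\{{}^g(\sigma\otimes\chi): g\in\tM\}$. The paper's one-sentence converse passes over this point. But neither of your two ways out closes the gap.

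\emph{First way out.} Reading $\dM$ modulo $\tM$-conjugacy changes the statement. In the paper, $\dM$ consists of isomorphism classes and $W_\OO$ is defined by ${}^w\sigma\simeq\sigma\otimes\chi$; the whole fixed-point problem is posed at that level. Under your convention both inclusions reduce to a restatement of Proposition~\ref{tadic}. Your sentence producing ``${}^w\sigma\simeq\sigma'\otimes\chi$ with $\sigma'$ in the $\tM$-class of $\sigma$'' only re-expresses membership in the packet.

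\emph{Fallback.} Goldberg's Lemma~2.3, with $\sigma\otimes\chi$ and $\pi_\sigma\otimes\widetilde\chi$ substituted, describes the $w$ that \emph{fix} $\sigma\otimes\chi$, not those sending $\sigma$ to $\sigma\otimes\chi$. Reducing to it by a twist would require some $\widetilde\lambda\in\XtMnru$ with ${}^w(\pi_\sigma\widetilde\lambda)\simeq\pi_\sigma\widetilde\lambda\,\eta'$, i.e.\ a fixed point of $\langle w\rangle$ in the sense of Section~\ref{2}. Section~\ref{2} shows exactly that this can fail for $w=(1\,\cdots\,m)$.

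\textbf{What is missing} is an invariant that tells you \emph{which} member of the packet ${}^w\sigma$ is. This depends on computing ${}^w\sigma$ with a representative in $N_G(M)\subset\SL(n,F)$. Two representatives in $N_{\tG}(\tM)$ differ by an element of $\tM$, which permutes the packet, so ``a permutation matrix'' is not enough. Whittaker models give one way to close the gap:
\begin{enumerate}
\item Let $U$ be the upper triangular unipotent group. Then $U\cap\tM=U\cap M$, discrete series of $\tM$ are generic, and Whittaker models are unique. Hence for each generic character $\psi_M$ of $U\cap M$, exactly one isomorphism class among the constituents of $(\pi_\sigma\otimes\widetilde\chi)_{|M}$ is $\psi_M$-generic.
\item Take $\psi_M$ with $\sigma$ $\psi_M$-generic; this is possible since $\tM=MT$, with $T$ the diagonal torus. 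Characters of $M$ are trivial on $U\cap M$, so that constituent is $\sigma\otimes\chi$.
\item Write $\psi_M={}^t\psi_M^0$ with $t\in T$ and $\psi^0_M$ the character with all simple-root coefficients equal to $1$. The block permutation matrix $P_w$ fixes $\psi^0_M$. If $\det P_w=-1$, some block has odd size; let $z\in Z(\tM)$ be $-1$ on that block and $1$ elsewhere. Then $z$ centralizes $U\cap M$ and $\det(zP_w)=1$ (take $z=1$ if $\det P_w=1$).
\item Set $\dot w=t\,zP_w\,t^{-1}$. This lies in $N_G(M)$, represents $w$, and satisfies ${}^{\dot w}\psi_M=\psi_M$.
\item Therefore ${}^{\dot w}\sigma$ is a $\psi_M$-generic constituent of $({}^w\pi_\sigma)_{|M}\simeq(\pi_\sigma\otimes\widetilde\chi)_{|M}$, and so ${}^w\sigma\simeq\sigma\otimes\chi$.
\end{enumerate}
In the application of Section~\ref{2} one has $X(\pi)=\{1\}$, so the packet is a singleton and the converse is immediate. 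The proposition, however, is stated in general.
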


\bigskip

\begin{proof} Let $V$ be a subrepresentation of $\pi_{\sigma|M}$ such that, $\sigma \simeq \pi_{\sigma|_V}$. Let $w \in W_\OO$, then there exists $\chi \in \chinru(M)$ such that $^w\sigma \simeq \sigma\otimes \chi$. On the one hand, we have $^w\sigma \simeq \left(^w \pi\right)_{\sigma|_V}$. On the other hand, by Corollary \ref{characterextended}, we can extend $\chi$ to $\tM$, denoted by $\widetilde{\chi}$. Therefore,~$\sigma~\otimes~\chi~\simeq~\pi_{\sigma|_V}~\otimes~\chi~\simeq~\pi_{\sigma|_V}~\otimes~ \widetilde\chi_{|M} =~\left( \pi_\sigma \otimes \widetilde{\chi} \right)_{|V}$. Since $^w\sigma \simeq \sigma\otimes \chi$, it follows from Proposition \ref{tadic} that there exists $\eta \in \wF$, such that $^w\pi_\sigma \simeq \pi_\sigma \otimes \widetilde\chi (\eta \circ \det)$. 

Conversely, suppose $^w\pi_\sigma \simeq \pi_\sigma \otimes \widetilde\chi (\eta \circ \det)$. Since $\sigma \simeq \pi_{\sigma|_V}$ then $^w\sigma \simeq \sigma\otimes \chi$.
\end{proof}

\bigskip

\begin{rema} Let $w \in W_\OO$. Then by Proposition \ref{WTheta}, there exists $\chi \in \chinru(\tM)$ and $\eta \in \wF$ such that $^w\pi_\sigma \simeq \pi_\sigma \otimes \widetilde{\chi} (\eta \circ \det)$. We give some remarks on $\eta$. Passing to the central character, we have that $\chi^n\eta^n =1$, since $\chi$ is unitary, $\eta$ is also unitary. With the properties seen in Appendix \ref{appendix}, we can decompose $\eta$ as the product of its ramified and unramified parts, denoted by $\eta_{ram}$ and $\eta_{nr}$, and we can choose $\eta$ totally ramified, i.e. $\eta(\varpi_F) =1$, by replacing $\chi$ by $\chi \eta_{nr}$ and $\eta$ by $\eta_{ram}$. Note that the order of $\eta_{ram}$ divides $n$. \end{rema}

\bigskip

\section{Construction of orbits without a fixed point}\label{2}

Let $n \geq 8$ be a non-prime integer distinct from $9$. (We will discuss small dimensions and prime values of $n$ in Section \ref{4}.) Let $m$ be a divisor of $n$ such that $n/m \geq 4$. Let $F$ be a finite field extension of $\Q_p$ and $G=\SL(n, F)$. We consider the following Levi subgroup~$\tM$ of $\tG =~\GL(n, F)$: \begingroup\setlength{\arraycolsep}{1,5pt}\renewcommand{\arraystretch}{0.9}  $$ \tM = \begin{pmatrix}
    \GL(k,F) &&&& \\&\ddots  &&&\\&& \GL(k,F)&& \\ &&& \GL(m,F)& \\ &&&&  \GL(m,F)
\end{pmatrix}$$ \endgroup

Let $\sigma \in \dM$ with $M = \tM \cap \SL(n,F)$ and let $\OO$ be the orbit of $\sigma$ under the action of $\XMnru$. We recall that saying $\OO$ has a fixed point under the action of $W_\OO$ means that there exists $\lambda \in \XMnru$ such that for all $w \in W_\OO$,  $^w(\sigma \otimes \lambda) \simeq \sigma \otimes \lambda$.  In other words, there exists $\lambda \in \XtMnru$ such that $W_\OO = W_{\sigma \otimes \lambda}$. 

The purpose of this section is to construct a discrete series $\pi$ of $\tM$ whose restriction $\sigma$ to $M$ is a discrete series of $M$ and whose orbit $\OO$ has no fixed point. We recall that for $\tau \in \mathcal{E}^2(\GL(m,F))$, $m \in \N^*$, the self-torsion group of $\tau$ (Definition \ref{selftorsion}) is $$X(\tau)=\{ \mu \in \wF \st \tau \simeq \tau\mu\}.$$ 

The core of the construction relies on the precise structure of those $X(\tau)$ groups. The following proposition provides examples of representations $\tau$ with prescribed $X(\tau)$. We will use them to construct orbits which do not have any fixed point. 
 
 \bigskip

\begin{prop}\label{main} Assume there exists a totally ramified character $\eta \in \wF$ such that~$\eta^m=1$. Let $\chi \in \mathcal{X}_{nr}^u(\wF)$ be such that $\chi^m=1$. Then there exist $\pi_1 \in \mathcal{E}^2(\GL(k,F))$ and $  \pi_{m+1},  \pi_{m+2} \in~\mathcal{E}^2(\GL(m,F))$ such that $X(\pi_{m+1})=\left<1, \eta\right>$, $X(\pi_{m+2}) = \left< 1, \chi\eta\right>$ and~$\eta, \chi \eta \notin~X(\pi_1)$.
 \end{prop}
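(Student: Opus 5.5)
We build all three representations as supercuspidals, using the Bushnell--Kutzko theory of simple types, as announced in the introduction. The key tool is the standard description of the self-torsion group of a supercuspidal of $\GL(m,F)$ built from a totally ramified extension. Let $E/F$ be a totally ramified extension of degree $m$. Choose a minimal element $\beta\in E$ with $F[\beta]=E$ and $\nu_E(\beta)=-r$, where $r$ is coprime to $m$, and let $[\mathfrak{A},r,0,\beta]$ be the corresponding simple stratum in $M_m(F)$. Take a simple character $\theta$ attached to this stratum. Because $E/F$ is totally ramified, $J=E^\times J^1$ and $\lambda$ can be taken to be a character $\Lambda$ of $J$ extending $\theta$. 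Then $\tau=\mathrm{c\text{-}Ind}_J^{\GL(m,F)}\Lambda$ is an irreducible supercuspidal representation.

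Next we compute $X(\tau)$. By intertwining implies conjugacy, $\tau\simeq\tau\otimes\mu$ if and only if $\Lambda$ and $\Lambda\otimes(\mu\circ\det)$ are $\GL(m,F)$-conjugate. In the essentially tame, minimal situation this forces $\mu\circ\det$ to be trivial on $J$, and on $E^\times$ the determinant restricts to the norm $N_{E/F}$. We therefore aim for
$$X(\tau)=\{\mu\in\wF \st \mu\circ N_{E/F}=1\}=\bigl(F^\times/N_{E/F}(E^\times)\bigr)^\wedge,$$
at least when $E/F$ is Galois (cyclic) of degree $m$. In that case local class field theory makes $X(\tau)$ cyclic of order $m$, and identifies it with the characters cutting out $E$. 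The caveat is that one must still check that $\mu\circ\det$ trivial on $J^1$ really follows.

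With this, the choices are as follows. Let $E_1$ be the cyclic extension of degree $m$ cut out by $\ker\eta$ via class field theory. Since $\eta$ is totally ramified of order $m$, the extension $E_1$ is totally ramified. Put $\pi_{m+1}=\tau_{E_1}$, so that $X(\pi_{m+1})=\langle\eta\rangle$. Let $E_2$ be the extension attached to $\ker(\chi\eta)$. Since $\chi$ is unramified of order $m$ and $\eta$ is trivial on $\varpi_F$, we have $(\chi\eta)(\varpi_F)=\chi(\varpi_F)$, a primitive $m$-th root of unity, and $\chi\eta$ restricted to $\OO_F^\times$ equals $\eta|_{\OO_F^\times}$, which still has order $m$. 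Hence $\chi\eta$ has order $m$, and its restriction to units also has order $m$. So $E_2/F$ is again totally ramified, now with $N(E_2^\times)\ni\varpi_F\cdot u$ for a suitable unit $u$. Put $\pi_{m+2}=\tau_{E_2}$, giving $X(\pi_{m+2})=\langle\chi\eta\rangle$. Here $m\mid q-1$ ensures that these extensions are tamely ramified, so minimal elements of the required level exist. For $\pi_1\in\mathcal{E}^2(\GL(k,F))$, the simplest choice is any representation with trivial self-torsion group. One option is a level-zero, or depth-zero, supercuspidal attached to a regular character of the unramified extension of degree $k$ whose Galois conjugates, after twisting, stay distinct. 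Another, avoiding the supercuspidal case entirely, is a Steinberg-type discrete series $\mathrm{St}_k$: it satisfies $\mathrm{St}_k\otimes\mu\simeq\mathrm{St}_k$ only if $\mu=1$, since the supercuspidal support $\{\nu^{i}\}$ twisted by $\mu$ must coincide. Therefore $X(\pi_1)=\{1\}$, and in particular $\eta,\chi\eta\notin X(\pi_1)$, because both are nontrivial.

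The main obstacle will be the exact computation of $X(\tau)$ for $\tau_{E_i}$. The containment $\bigl(F^\times/N(E^\times)\bigr)^\wedge\subseteq X(\tau)$ is easy, since twisting by $\mu$ with $\mu\circ N_{E/F}=1$ leaves $\Lambda$ unchanged. The reverse inclusion needs more care. Here we would use that $\mu\circ\det$ restricted to $U^1(\mathfrak{A})$ is given by a stratum $c\in F$, which would have to be absorbed by $\beta$ in the sense of simple strata. Choosing $r$ large and coprime to $m$ prevents a scalar $c$ of lower valuation from intertwining, so we expect $\mu$ to be tame, with $\mu|_{1+\mathfrak{p}_F}$ trivial. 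Then, comparing central characters and the values $\Lambda(\varpi_E)$, we expect to pin $\mu$ down to $N(E^\times)^\perp$. We would also consider choosing $\Lambda$ generic enough to rule out extra accidental conjugacies by elements of $N_{\GL(m)}(J)=E^\times J$.
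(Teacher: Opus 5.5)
\textbf{Verdict: same approach as the paper, but with a genuine gap at the key step.}

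Your overall architecture matches the paper's. You build Bushnell--Kutzko simple types from a totally ramified extension $E/F$ of degree $m$, use $X(\tau)=\{\mu\in\wF \mid \mu\circ N_{E/F}=1\}$, and pick extensions whose norm groups are $\ker\eta$ and $\ker(\chi\eta)$. Obtaining $E_1,E_2$ from class field theory instead of the explicit polynomials $X^m+\varpi_F$, $X^m+\zeta\varpi_F$ is equivalent, and it works for whatever $\eta,\chi$ are given. Steinberg for $\pi_1$ is also fine.

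The gap is that you only prove $\langle\eta\rangle\subseteq X(\pi_{m+1})$ and $\langle\chi\eta\rangle\subseteq X(\pi_{m+2})$. The reverse inclusion $X(\tau)\subseteq N_{E/F}(E^\times)^\perp$ is the real content, since Section~2 needs $X(\pi_{m+1})\cap X(\pi_{m+2})=\{1\}$. You leave it as an expectation, and the route you sketch would not deliver it.

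\emph{Taking $r$ large is backwards.} If $r/m>1$, a wild $\mu$ of level $\ell$ with $1\le \ell<r/m$ leaves the level of $\tau$ unchanged. The scalar $c\in F$ it contributes has valuation $-\ell>\nu_F(\beta)$, so comparing levels excludes nothing. You want $r<m$, best $r=1$ with $\beta=\varpi_E^{-1}$ as in the paper. Then $\tau$ has normalized level $1/m<1$, while any $\mu$ nontrivial on $1+\mathfrak{p}_F$ has level $\ge 1$. So $\tau\otimes\mu$ has a different level and $\mu\notin X(\tau)$.

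\emph{$\Lambda$ is a character only for odd $r$.} For even $r$ one has $J^1\neq H^1$, and the $\beta$-extension has dimension $q^{(m-1)/2}$, so your claim that $\lambda$ is a character fails there.

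\emph{Genericity of $\Lambda$ plays no role, and $N_G(J)\neq E^\times J$.} The group $U(\mathfrak{A})$ normalizes $J=\OO_E^\times U^1(\mathfrak{A})$, because $U(\mathfrak{A})/U^1(\mathfrak{A})\cong (k_F^\times)^m$ is abelian. Conversely, conjugation by elements of $E^\times J$ fixes the character $\Lambda$, so there are no ``accidental conjugacies'' there to rule out.

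\emph{The missing idea is the intertwining of the simple character.} By the intertwining formula in \cite[\S 3.3]{BK},
$$I_G(\psi_\beta)=U^1(\mathfrak{A})E^\times U^1(\mathfrak{A})=E^\times J.$$
With $r=1$ the argument then closes:
\begin{enumerate}
\item By the level argument above, $\mu\in X(\tau)$ forces $\mu$ to be tame.
\item If $\tau\otimes\mu\simeq\tau$, Mackey theory gives $g\in G$ intertwining $\Lambda$ with $\Lambda\cdot(\mu\circ\det)$.
\item Since $\det U^1(\mathfrak{A})\subseteq 1+\mathfrak{p}_F$ and $\mu$ is tame, $g$ intertwines $\psi_\beta$ with itself, so $g\in E^\times J$.
\item Hence ${}^g\Lambda=\Lambda$, so $\mu\circ\det$ is trivial on $E^\times J$, i.e.\ $\mu\circ N_{E/F}=1$.
\end{enumerate}
This is what the lemma preceding Proposition~\ref{propselftorsiongroup} provides in the paper. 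There, after reducing to conjugacy by $U(\mathfrak{A})$, one still needs $I_G(\psi_\beta)\cap U(\mathfrak{A})=J$. Comparing central characters or the values $\Lambda(\varpi_E)$ cannot replace this step: central characters only give $\mu^m=1$, and values at $\varpi_E$ are only comparable once the intertwining element is known to lie in $E^\times J$.
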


\bigskip

We postpone the proof of this proposition in Section \ref{3}. In the remainder of this section, we apply Proposition \ref{main} to construct the aforementioned examples of orbits without fixed points. Let $\eta \in \wF$ be of order $m$, $\chi \in \mathcal{X}_{nr}^u(M)$ be of order $m$ and choose $\pi_1 \in \mathcal{E}^2(\GL(k,F))$, $  \pi_{m+1},  \pi_{m+2} \in \mathcal{E}^2(\GL(m,F))$ satisfying the properties in Proposition \ref{main}.

For all $2 \leq i \leq m$, set $\pi_i = \pi_1 \eta^{i-1}$ and set $\pi = \pi_1 \otimes \pi_2 \otimes \dots \otimes \pi_{m+1} \otimes \pi_{m+2}$; thus, \begingroup\setlength{\arraycolsep}{1,5pt}\renewcommand{\arraystretch}{0.8} \begin{equation}\label{pii}\pi =   \begin{pmatrix}
    \pi_1 &&&& \\ & \pi_1\eta &&& \\ && \ddots&&&\\  &&&\pi_{1}\eta^{m-1}& &\\ &&&& \pi_{m+1}& \\ &&&&& \pi_{m+2}
\end{pmatrix} \in \dtM.\end{equation} Denote by $\widetilde{\OO}$ the orbit of $\pi$ under the action of $\XtMnru$ and let $\OO$ be the $\XMnru$-orbit of any discrete series $\sigma \in \dM$ such that $\sigma \subset \pi_{|M}$. \endgroup

\bigskip

\begin{rema} We will discuss the existence of characters of $F^\times$ of order $m$ in Appendix \ref{appendix}. \end{rema}

\bigskip

\begin{prop}\label{WT} We have $W_\OO = \left<  (1 \; \dots \; m) \right>$.
\end{prop}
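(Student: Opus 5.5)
The plan is to use the description of $W_\OO$ from Proposition \ref{WTheta}, applied with $\pi_\sigma=\pi$ as in \eqref{pii}. An element $w\in W(M)$, viewed as a permutation of the $m+2$ blocks, lies in $W_\OO$ if and only if
$$\pi_{w^{-1}(j)}\simeq \pi_j\otimes\chi_j\mu \qquad (1\le j\le m+2)$$
for some unitary unramified characters $\chi_1,\dots,\chi_{m+2}$ of $F^\times$ and some $\mu\in\wF$. By the remark following Proposition \ref{WTheta}, we may take $\mu$ totally ramified. The proof then has two parts: the inclusion $\left<(1\;\dots\;m)\right>\subset W_\OO$, and the reverse inclusion.

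\textbf{The cycle lies in $W_\OO$.} Let $c=(1\;\dots\;m)$. Then ${}^c\pi$ has blocks $\pi_1\eta^{m-1},\pi_1,\pi_1\eta,\dots,\pi_1\eta^{m-2},\pi_{m+1},\pi_{m+2}$; the exact direction of the shift depends on the convention, and the other direction is symmetric. Since $\eta^m=1$, the first $m$ blocks are exactly those of $\pi\otimes\eta^{-1}$.
\begin{itemize}
\item On block $m+1$ we have $\pi_{m+1}\eta^{-1}\simeq\pi_{m+1}$, because $\eta\in X(\pi_{m+1})$.
\item On block $m+2$ we have $\pi_{m+2}\eta^{-1}\simeq\pi_{m+2}\eta^{-1}(\chi\eta)=\pi_{m+2}\chi$, because $\chi\eta\in X(\pi_{m+2})$.
\end{itemize}
Hence ${}^c\pi\simeq\pi\otimes\widetilde\chi\,(\eta^{-1}\circ\det)$ with $\widetilde\chi=(1,\dots,1,1,\chi^{-1})\in\XtMnru$. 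By Proposition \ref{WTheta}, $c\in W_\OO$.

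\textbf{The blocks $m+1$ and $m+2$ are fixed.} Conversely, let $w\in W_\OO$. The key invariant is the self-torsion group: $X(\tau\otimes\nu)=X(\tau)$ for any character $\nu$. From the displayed relation we therefore get $X(\pi_{w^{-1}(j)})=X(\pi_j)$ for every $j$. Now compare the three kinds of blocks:
\begin{itemize}
\item $X(\pi_1\eta^{i})=X(\pi_1)$ does not contain $\eta$;
\item $X(\pi_{m+1})=\left<\eta\right>$;
\item $X(\pi_{m+2})=\left<\chi\eta\right>$, which differs from $\left<\eta\right>$ because $\chi$ is nontrivial unramified and $\eta$ is totally ramified.
\end{itemize}
So $w$ fixes $m+1$ and $m+2$, and hence permutes $\{1,\dots,m\}$. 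This argument also covers the case $k=m$, where $W(M)$ is the full $\mathfrak S_{m+2}$ and a priori could mix the two kinds of blocks.

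\textbf{$w$ is a rotation (main obstacle).} For $j\le m$, write $w^{-1}(j)-1\equiv j-1+b_j \pmod m$. The relation becomes $\pi_1\simeq\pi_1\,\eta^{-b_j}\chi_j\mu$, and block $m+1$ gives $\pi_{m+1}\simeq\pi_{m+1}\chi_{m+1}\mu$. The aim is to show that all $b_j$ are equal, so that $w$ is a power of $c$. The difficulty is that twisting by unramified characters can stabilize a supercuspidal representation (its torsion number), so these relations do not immediately place $\eta^{-b_j}\mu$ inside a self-torsion group.

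I would first pass to the ramified parts by restricting to $\mathcal O_F^\times$. For this I would use the explicit construction of Section \ref{3}, where $\pi_1,\pi_{m+1}$ are compactly induced from simple types attached to totally ramified extensions $E/F$, and self-torsion is governed by $N_{E/F}$. The goal is the following: if a totally ramified $\nu$ satisfies $\tau\simeq\tau\nu\chi'$ with $\chi'$ unramified, then $\nu\in X(\tau)$. Concretely, for a totally ramified $E/F$ the unramified twists fixing $\tau$ have order dividing $e(E/F)$ times a trivial part, and I expect a central-character or norm-group argument to separate them from totally ramified characters.

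Granting this, block $m+1$ gives $\mu\in\left<\eta\right>$, say $\mu=\eta^a$. Each block $j\le m$ then gives $\eta^{a-b_j}\in X(\pi_1)$. Since $X(\pi_1)\cap\left<\eta\right>$ must be trivial (this should be arranged in the choice of $\pi_1$ in Proposition \ref{main}, strengthening $\eta\notin X(\pi_1)$ if needed), we get $b_j=a$ for all $j$. Hence $w=c^{a}$, which proves $W_\OO=\left<(1\;\dots\;m)\right>$.
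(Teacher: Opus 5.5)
Your first two steps agree with the paper. The $m$-cycle lies in $W_\OO$ by the same computation, up to the direction of the shift. And $w$ must fix $m+1$ and $m+2$ because $X(\pi_1)$, $\left<\eta\right>$ and $\left<\chi\eta\right>$ are pairwise distinct.

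The gap is in the third step. The statement you propose to grant is: if $\nu$ is totally ramified, $\chi'$ is unramified and $\tau\simeq\tau\nu\chi'$, then $\nu\in X(\tau)$. This is false, and your own $\pi_{m+2}$ is a counterexample: $\chi\eta\in X(\pi_{m+2})$, yet $\eta\notin\left<\chi\eta\right>=X(\pi_{m+2})$ (evaluate at $\varpi_F$). The construction depends on $\pi_{m+2}$ being fixed by a mixed twist but not by its ramified part. Restricting to $\OO_F^\times$ therefore throws away exactly the information that matters.

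For the same reason, your proposed strengthening for $\pi_1$, namely $X(\pi_1)\cap\left<\eta\right>=\{1\}$, is not enough. Suppose $\chi'\eta\in X(\pi_1)$ for some nontrivial unramified $\chi'$, for instance $X(\pi_1)=\left<\chi'\eta\right>$ with $\chi'$ of order $m$; then $X(\pi_1)\cap\left<\eta\right>$ is trivial. But $\pi_1\eta\simeq\pi_1\chi'^{-1}$, so taking $\mu=1$, $\chi_1=\chi'^{-1}$, $\chi_2=\chi'$ and all other $\chi_i=1$ shows that the transposition $(1\;2)$ lies in $W_\OO$. For $m\geq 3$ this is not a power of the $m$-cycle.

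The missing observation is that no Bushnell--Kutzko input is needed. By definition, $X(\tau)$ is the group of \emph{all} characters fixing $\tau$, including unramified and mixed ones.
\begin{itemize}
\item Block $m+1$ gives $\chi_{m+1}\mu\in X(\pi_{m+1})=\left<\eta\right>$. Every element of $\left<\eta\right>$ is trivial on $\varpi_F$, so $\chi_{m+1}=1$ and $\mu=\eta^a$ directly.
\item Block $j\leq m$ gives $\eta^{a-b_j}\chi_j\in X(\pi_1)$. To conclude you need $X(\pi_1)$ to contain no character $\chi'\eta^{c}$ with $\chi'$ unramified and $\eta^c\neq 1$.
\end{itemize}
The paper simply uses $X(\pi_1)=\{1\}$, which is stronger than what Proposition~\ref{main} states. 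This gives $\chi_j=1$ and $b_j\equiv a \pmod m$ at once.

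The paper's argument is also shorter here: it never uses block $m+1$ to determine $\mu$. Instead it divides the relations for blocks $1$ and $j$, which cancels $\mu$ and leaves $\lambda_1\lambda_j^{-1}\eta^{w(j)-w(1)-j+1}\in X(\pi_1)=\{1\}$. This forces $w(j)\equiv j+w(1)-1 \pmod m$.
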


\bigskip

Before proving thid proposition, we need the following lemma, which follows directly from the definitions:\\

\begin{lemm}\label{selftorsiongrp} Let $\tau_1, \tau_2 \in \mathcal{E}^2(\GL(n,F))$ be such that $\tau_1 \simeq \tau_2\alpha$ where $\alpha \in \wF$. Then~$X(\tau_1)~=~X(\tau_2)$.
\end{lemm}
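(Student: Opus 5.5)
The lemma follows from a direct double inclusion, using only Definition \ref{selftorsion} and the fact that twisting by characters of the form $\mu\circ\det$ is commutative and invertible. I will show $X(\tau_2)\subseteq X(\tau_1)$ and then get the reverse inclusion by symmetry.

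For the first inclusion, take $\mu \in X(\tau_2)$, so $\tau_2 \simeq \tau_2\mu$. Twisting an isomorphism by the character $\alpha\circ\det$ preserves it, because it is just the same linear isomorphism regarded between the twisted spaces. Hence
\[
\tau_1\mu \simeq (\tau_2\alpha)\mu = (\tau_2\mu)\alpha \simeq \tau_2\alpha \simeq \tau_1 .
\]
The middle equality uses $(\alpha\circ\det)(\mu\circ\det) = (\mu\circ\det)(\alpha\circ\det)$, which holds since both are scalar-valued characters. It follows that $\mu \in X(\tau_1)$.

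For the reverse inclusion, note that $\tau_1 \simeq \tau_2\alpha$ gives $\tau_2 \simeq \tau_1\alpha^{-1}$. This is obtained by twisting the isomorphism by $\alpha^{-1}\circ\det$ and using $\alpha\alpha^{-1}=1$. Since $\alpha^{-1}\in\wF$, the same argument with the roles of $\tau_1$ and $\tau_2$ exchanged gives $X(\tau_1)\subseteq X(\tau_2)$, and so $X(\tau_1)=X(\tau_2)$.

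There is no real obstacle here. The one point that needs checking is that twisting respects isomorphism classes, and this is immediate from the definition of the twisted representation $g\mapsto \tau(g)\,\alpha(\det g)$.
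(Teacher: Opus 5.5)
Your proof is correct and is exactly the direct verification from Definition \ref{selftorsion} that the paper has in mind. The paper writes out no proof and only says the lemma follows directly from the definitions. Your double inclusion supplies those details: twisting by $\alpha\circ\det$ preserves isomorphism classes and commutes with twisting by $\mu\circ\det$, and symmetry comes from $\alpha^{-1}$.
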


\bigskip

\begin{proof}[Proof of Proposition \ref{WT}]
First, let us check that the $m$-cycle $ (1 \; \dots \; m)$ is in  $W_\OO$. Since $\pi_{m+1} \simeq \pi_{m+1} \eta$ and $\pi_{m+2} \simeq \pi_{m+2} \chi\eta$, we have

\begingroup\setlength{\arraycolsep}{1,5pt}\renewcommand{\arraystretch}{0.8} \begin{equation}\label{12} ^{(1 \; \dots \; m)} \pi  \simeq  {\footnotesize \begin{pmatrix} \pi_1\eta &&&&&& \\ & \pi_1\eta^2 && \\ && \ddots & \\ &&& \pi_1\eta^{m-1} \\ &&&& \pi_1 \\ &&&&& \pi_{m+1} & \\ &&&&&& \pi_{m+2}
\end{pmatrix} \simeq \pi \otimes  \begin{pmatrix} 1 &&& \\ & \ddots && \\ && 1 & \\ &&& \chi
\end{pmatrix} }\otimes \eta \end{equation}\endgroup and $ (1 \; \dots \; m) \in W_\OO$, as claimed. 

Now, assume there exists $w \in W_\OO$ such that $w \notin \left< (1 \; \dots \; m) \right>$. Then, there exists~$\lambda~=~\otimes_{i =1}^{m+2} \lambda_i \in \mathcal{X}_{nr}^u(\tM)$ and $\mu \in \wF$ totally ramified such that for all $1 \leq i \leq m+2$, $$\pi_{w(i)} = \pi_i \otimes \lambda_i \mu.$$ By Lemma~\ref{selftorsiongrp}, $X(\pi_{w(i)}) = X(\pi_i)$. But we know that the self-torsion groups of $\pi_1, \pi_{m+1}$ and~$\pi_{m+2}$ are different therefore there does not exist any character $\alpha$ of $F^\times$ such that $\pi_i \simeq \pi_j \otimes \alpha$ for~$i \neq~j \in~\{1,m+1, m+2 \}$ . Hence, $w(m+1)=m+1$ and $w(m+2)=m+2$. If~$w \notin  \left< (1 \; \dots \; m) \right>$, there exists $2 \leq j \leq m$ such that, if $w(1) = 1+a$ with $a <m$, $w(j) \neq j + a \! \mod m$. Then, by assumption, we have $$\pi_{w(1)} \simeq \pi_1 \lambda_1 \mu \qquad \mathrm{and} \qquad \pi_{w(j)} \simeq \pi_j \lambda_j \mu$$ and by \eqref{12}, $$ \pi_{w(1)} \simeq \pi_1 \eta^{w(1)-1}  \qquad \mathrm{and} \qquad \pi_{w(j)} \simeq \pi_1 \eta^{w(j)-1}.$$ 
Hence $$\pi_1 \simeq \pi_1  \lambda_1^{-1} \mu^{-1} \eta^{w(1)-1} \qquad \mathrm{and} \qquad \pi_j  \simeq \pi_1 \lambda_j^{-1} \mu^{-1} \eta^{w(j)-1}.$$ Since $\pi_j \simeq \pi_1 \eta^{j-1}$, this yields $$\pi_1  \lambda_1^{-1} \mu^{-1} \eta^{w(1)-1} \simeq \pi_1 \lambda_j^{-1} \mu^{-1} \eta^{w(j)-j}.$$ Hence, $$\pi_1 \simeq \pi_1  \lambda_1\lambda_j^{-1} \eta^{w(j)-w(1)-j+1}.$$ Since $X(\pi_1)=\{1\}$ and $\eta$ is totally ramified, this implies $$\lambda_1 = \lambda_j \qquad \mathrm{and} \qquad  \eta^{w(j)-(a+j)} = 1$$ which is impossible because $\eta$ is totally ramified of order $m$ and $w(j) \neq j + a \! \mod m$. So $w \in  \left<  (1 \; \dots \; m) \right>$ and $W_\OO = \left<  (1 \; \dots \; m) \right>$.
\end{proof}

\bigskip

\begin{theo} The orbit $\widetilde{\OO}$ of $\pi$ does not have any fixed point under the action of $W_\OO$.
\end{theo}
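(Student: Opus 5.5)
We argue by contradiction. Since $W_\OO = \left< (1 \; \dots \; m)\right>$ by Proposition \ref{WT}, it suffices to show that no point of the orbit is fixed by the cycle $w = (1\;\dots\;m)$. Suppose some $\sigma \otimes \lambda$ in $\OO$ is fixed by $w$. By Corollary \ref{characterextended}, extend $\lambda$ to $\widetilde\lambda = \otimes_{i=1}^{m+2}\lambda_i \in \XtMnru$, and set $\pi' = \pi \otimes \widetilde\lambda$. Then $\sigma\otimes\lambda \subset \pi'_{|M}$, and by Goldberg's description of $W_{\sigma\otimes\lambda}$ there is $\mu \in \wF$ with
$$ ^w\pi' \simeq \pi' \otimes \mu .$$
This is the condition I will show to be impossible. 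The argument has three steps.

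\textbf{Reading off the blocks.} The conjugate $^w\pi'$ has $i$-th block $\pi_1\eta^{i}\lambda_{i+1}$ for $i<m$, and $m$-th block $\pi_1\lambda_1$. Its last two blocks are unchanged. Comparing blockwise with $\pi'\otimes\mu$ gives the following.
\begin{itemize}
\item On the last two blocks, $\mu \in X(\pi_{m+1}) \cap X(\pi_{m+2}) = \left<\eta\right>\cap\left<\chi\eta\right>$.
\item On the first block, $\pi_1 \eta\lambda_2 \simeq \pi_1 \lambda_1\mu$, and similarly $\pi_1\eta^{i}\lambda_{i+1}\simeq \pi_1\eta^{i-1}\lambda_i\mu$ for the other cycled blocks.
\end{itemize}
Hence $\eta\mu^{-1}\lambda_{i+1}\lambda_i^{-1} \in X(\pi_1)$ for all $i$, indices taken mod $m$.

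\textbf{Using the self-torsion groups.} First I compute $\left<\eta\right>\cap\left<\chi\eta\right>$. Since $\eta$ is totally ramified and $\chi$ unramified, both of order $m$, an element $\eta^a = (\chi\eta)^b$ forces $\chi^b = \eta^{a-b}$. The left side is unramified and the right side totally ramified, so both are trivial, giving $m \mid b$ and $a \equiv b$. Thus the intersection is trivial and $\mu = 1$.

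Now use that $X(\pi_1)$ is trivial, as used in the proof of Proposition \ref{WT}. If one only has $\eta,\chi\eta\notin X(\pi_1)$, I would choose $\pi_1$ in Section \ref{3} with trivial $X(\pi_1)$, since that is what Proposition \ref{WT} already needs. This gives $\lambda_{i+1} = \lambda_i\eta^{-1}$ for all $i$. The left side is unramified and $\eta$ is not, so this equation cannot hold.

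Alternatively, with the weaker hypothesis on $X(\pi_1)$, one argues as follows. Evaluating on units, the character $\eta\lambda_{i+1}\lambda_i^{-1}$ restricts to $\eta$ on $\OO_F^\times$. So it must be an element of $X(\pi_1)$ with that ramified part. I expect the main subtlety to lie exactly here: excluding that $X(\pi_1)$ contains $\eta\nu$ for some unramified $\nu$. The cleanest route is the triviality of $X(\pi_1)$, which I would impose.

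A product check gives a consistency test. Multiplying the $m$ relations telescopes the $\lambda$'s and gives $\eta^m\mu^{-m} = 1$, which is consistent with the above. So the contradiction must come from the ramified/unramified comparison itself, not from the telescoping product.
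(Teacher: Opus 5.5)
Your proposal is correct and is essentially the paper's proof: the last two blocks force $\mu \in X(\pi_{m+1})\cap X(\pi_{m+2})$, which is trivial (you supply the ramified/unramified argument that the paper leaves implicit), and then the first-block relation $\pi_1\eta\lambda_2\simeq\pi_1\lambda_1$ together with $X(\pi_1)=\{1\}$ makes $\eta=\lambda_1\lambda_2^{-1}$ unramified, a contradiction. You are also right that this step needs $X(\pi_1)$ trivial rather than merely $\eta,\chi\eta\notin X(\pi_1)$, and the paper's proof relies on exactly that stronger hypothesis.
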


\bigskip

\begin{proof} Assume that there exists $\lambda \in \XtMnru$ such that $\pi \otimes \lambda$ is fixed under the action of $W_\OO$. Then there exists $\mu \in \wF$ such that $^{ (1 \; \dots \; m)}(\pi \otimes \lambda) \simeq \pi \otimes \lambda\mu$. So \begin{equation}\label{fixedpoint} \left\{\begin{matrix}\pi_2 \lambda_2 \simeq \pi_1\lambda_2 \eta \simeq \pi_1 \lambda_1 \mu \\ \pi_{m+1} \lambda_{m+1} \simeq \pi_{m+1} \lambda_{m+1} \mu \\ \pi_{m+2} \lambda_{m+2} \simeq \pi_{m+2} \lambda_{m+2} \mu  \end{matrix}\right.\end{equation} this implies that $\mu \in X(\pi_{m+1}) \cap X(\pi_{m+2})$. Therefore, necessarily, $\mu= 1$. 

Using the first equation of \eqref{fixedpoint} with $\mu=1$,  we have $\pi_1\eta \simeq \pi_1 \lambda_1 \lambda_2^{-1}$. Since $X(\pi_1)=\{1\}$, we obtain $\eta =  \lambda_1 \lambda_2^{-1}$, therefore, $\eta$ is unramified, which is a contradiction because $\eta$ is totally ramified and nontrivial.
\end{proof}

\bigskip

\begin{coro} The restriction of $\pi$ to $M$ is an irreducible discrete series of $M$ and has no fixed point under the action of $W_\OO$. \end{coro}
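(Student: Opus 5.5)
The plan has two parts: first show that $X(\pi)$ is trivial, which gives irreducibility, and then transfer the fixed-point question from $M$ to $\tM$ so that the preceding theorem applies.

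\emph{Irreducibility.} I would first compute the self-torsion group $X(\pi)$ of $\pi\in\dtM$. A character $\mu\in\wF$ satisfies $\pi\simeq\pi\otimes(\mu\circ\det)$ exactly when $\pi_i\simeq\pi_i\mu$ for every block $i$, so $X(\pi)=\bigcap_{i=1}^{m+2}X(\pi_i)$. In particular $X(\pi)\subset X(\pi_{m+1})\cap X(\pi_{m+2})=\left<\eta\right>\cap\left<\chi\eta\right>$. I claim this intersection is trivial. Suppose $(\chi\eta)^j=\eta^i$. Then $\chi^j=\eta^{i-j}$ is both unramified and totally ramified, so $\chi^j=1$. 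Since $\chi$ has order $m$, this forces $m\mid j$, and then $(\chi\eta)^j=1$. Hence $X(\pi)=\{1\}$, which is cyclic, and Proposition~\ref{cyclic} shows that $\pi_{|M}$ is multiplicity free.

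By Proposition~\ref{length}, the length of $\pi_{|M}$ is then the index of $\mathcal{T}(\pi)=\bigcap_{\mu\in X(\pi)}\ker(\mu\circ\det)=\tM$ in $\tM$, namely $1$. So $\sigma:=\pi_{|M}$ is irreducible. It remains to see that $\sigma$ is a discrete series. I would argue this directly: matrix coefficients of $\sigma$ are restrictions to $M$ of matrix coefficients of $\pi$, and $M$ has finite index in $Z_{\tM}M$ modulo $Z_{\tM}$ in the relevant sense, so square-integrability modulo the centre passes from $\tM$ to $M$. Alternatively, Proposition~\ref{pi} combined with Proposition~\ref{tadic} shows that the irreducible constituents of restrictions of discrete series are discrete series.

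\emph{No fixed point.} Let $\OO$ be the $\XMnru$-orbit of $\sigma$; by Proposition~\ref{WT}, $W_\OO=\left<(1\;\dots\;m)\right>$. Suppose $\sigma\otimes\chi'$ were fixed by $W_\OO$ for some $\chi'\in\XMnru$. By Corollary~\ref{characterextended}, $\chi'$ extends to some $\lambda\in\XtMnru$, and $\sigma\otimes\chi'=(\pi\otimes\lambda)_{|M}$, with $\pi\otimes\lambda$ a discrete series of $\tM$ lying over it. Goldberg's description of stabilizers (Lemma 2.3 of \cite{goldberg}), applied to $\pi\otimes\lambda$, then gives some $\mu\in\wF$ with
\[
^{(1\;\dots\;m)}(\pi\otimes\lambda)\simeq\pi\otimes\lambda\mu .
\]
This is exactly the system \eqref{fixedpoint}, and the proof of the preceding theorem derives a contradiction from it. The step I expect to need the most care is this translation between fixed points in $\OO\subset\dM$ and twists of $\pi$ on $\tM$. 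Irreducibility makes it clean, because $\pi\otimes\lambda$ is the unique lift of $\sigma\otimes\chi'$ up to twisting by a character of $F^\times$ (Proposition~\ref{tadic}).
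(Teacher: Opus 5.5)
Your proposal is correct and follows the paper's route: triviality of $X(\pi)$ gives multiplicity one and length one via Propositions~\ref{cyclic} and~\ref{length}, and the absence of a fixed point is reduced to the preceding theorem. You also supply details the paper leaves implicit: the computation $X(\pi)\subset\left<\eta\right>\cap\left<\chi\eta\right>=\{1\}$, the correct value $\mathcal{T}(\pi)=\tM$ (the paper writes $M$), the discrete-series property, and the passage from a fixed point in $\OO$ to a twist $\pi\otimes\lambda$ satisfying \eqref{fixedpoint}.
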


\bigskip

\begin{proof} Since $X(\pi)$ is trivial, by Proposition \ref{cyclic}, $\pi_{|M}$ is multiplicity free and by Proposition~\ref{length}, since $\mathcal{T}(\pi) = \ker (\mathrm{Triv}) = M$, the length of $\pi_{|M}$ is one. Finally, by the previous theorem, $\pi_{|M}$ has no fixed point under the action of $W_\OO$. \end{proof}

\section{Construction of certain supercuspidal representations of $\GL(m,F)$.}\label{3}

Let $F$ be a non-archimedean local field of characteristic zero. We want to construct two supercuspidal representations $\pi_{m+1}$ and $\pi_{m+2}$ of $\GL(m,F)$ which satisfy the assumption in Proposition \ref{main}; in particular, we want $X(\pi_{m+1})$ and $X(\pi_{m+2})$ to be isomorphic to $\Z/m\Z$. To construct them, we will proceed in several steps, following the book of Bushnell and Kutzko~\cite{BK}. This construction is sketched in \cite{roche}, but we shall try to make the details fully explicit here.

We begin with a finite field extension of degree $m$, of the shape $E = F[\varpi]$ such that the minimal polynomial of $\varpi$ satisfies Eisenstein's criterion. With this set up, we construct a simple stratum. Then, we will define a compact open subgroup $J$ of $\GL(m,F)$ and construct a character $\lambda$ of $J$, in this way, the pair $(J, \lambda)$ will be a simple type in $\GL(m,F)$. Finally, by compact induction, we will obtain the desired supercuspidal representations. 

\subsection{Construction of a simple straum}

Let $\mathcal{M}_m(F)$ be the set of matrices of size $m$ with coefficients in $F$. The first step is to construct a simple stratum from an extension $E/F$ of degree $m$ and a minimal element $\beta \in \mathcal{M}_m(F)$ over $F$. Recall that $F$ is a non-archimedean local field of characteristic zero. Denote by $\OO_F$ its ring of integers, $\mathfrak{p}_F$ is the unique maximal ideal of $\OO_F$ and $k_F= \OO_F/\mathfrak{p}_F$ is the residue class field of $F$. Denote by $\varpi_F$ a uniformizer in $F$, that is to say, $v_F(\varpi_F) = 1$ with $v_F$ the valuation on $F$. For every element $x$ of $F^\times$, we can write $x = \varpi_F^n u$ with $n \in \Z$ and $u \in \OO_F^\times$ with $\OO_F^\times$ the set units in $F^\times$. The structure of $F^\times$ will be discussed in Appendix \ref{appendix}. Let $V$ be an $F$-vector space of dimension $m$ and set $A= \mathrm{End}_F(V)$ and $G= \mathrm{Aut}_F(V)$, which we respectively identify with $\mathcal{M}_m(F)$ and $\GL(m,F)$.

\bigskip

\begin{defi}[\cite{BK}, p.20]\label{definition}  An $\OO_F$-lattice chain order in $V$ is a sequence $\mathcal{L} = \{ L_i, \; i \in~\Z\}$ of $\OO_F$-lattices in $V$ such that \begin{enumerate} \item for all $i \in \Z$, $L_i  \supset  L_{i+1}$ and $L_i  \neq  L_{i+1}$; 
\item there exists $e \in \Z$ such that for all $i \in \Z$, $L_{i+e} = \varpi_FL_i$. \end{enumerate}
 Let $\mathcal{L}$ be an $\OO_F$-lattice chain order in $V$. The corresponding $\OO_F$-(hereditary) order in $\mathcal{M}_m(F)$ is the set $$\mathfrak{A} = \mathfrak{A}(\mathcal{L}) = \{ x \in \mathcal{M}_m(F) \; | \; x L_i \subset L_i, \; \forall i \in \Z\}.$$
\end{defi}

\bigskip

Let $\mathfrak{A}= \mathfrak{A}(\mathcal{L})$ be an $\OO_F$-order as above; we define its Jacobson radical, denoted by $\mathfrak{P}$, by $$\mathfrak{P} = \{ x \in A \; | \; x L_i \subset L_{i+1}\}$$ and set $$\mathfrak{P}^n =  \{ x \in A \; | \; x L_i \subset L_{i+n}\}, \qquad n \in \Z.$$

Set $$\mathfrak{K}(\mathfrak{A}) = \{ x \in \GL(m,F) \; | \; x^{-1} \mathfrak{A} x = \mathfrak{A}\}.$$ This is a compact-mod-center subgroup of $G$. And for $x \in A$ define, $$v_\mathfrak{A}(x) = \max \{ n \in \Z \; | \; x \in \mathfrak{P}^n\},$$ the valuation map associated with the order $\mathfrak{A}$. To any order, we may also associate an integer $k_0(\beta, \mathfrak{A})$ and define a notion of minimal element in $A$. To prevent heavy notations and definitions, we shall not spell out the corresponding definitions and refer the reader, respectively, to \cite{BK}, Definition 1.4.5 and Definition 1.4.14 for more details.
\bigskip

\begin{rema}\label{embedding} Suppose $E$ is a finite field extension of $F$ of degree $m$. For a given uniformizer $\varpi_E \in E$, we can choose as a basis in $E$ the set $\{1, \varpi_E, \varpi_E^2, \dots, \varpi_E^{m-1}\}$. This way, the field extension $E$ can be embedded in $A$ by the following map: $$\begin{matrix}  E & \longrightarrow & A \\ a & \longmapsto & \left[\begin{matrix} \mu_a :& F & \longrightarrow & F \\ & x & \longmapsto & ax \end{matrix}\right] \end{matrix}.$$ We shall identify the field $E$ with the image of this map, thereby viewing $E$ as a subset of $\mathrm{End}_V(F)$ or even $\mathcal{M}_m(F)$.
 \end{rema}
 
 \bigskip 
 
\begin{defi}[\cite{BK}, Definition 1.5.5] A stratum is a $4$-tuple $[\mathfrak{A}, n, r, \beta]$ where $\mathfrak{A}$ is an $\OO_F$-order in $A$, $n,r$ are integers such that $n>r$ and $\beta$ is an element of $A$. We say that $[\mathfrak{A}, n, r, \beta]$ is a simple stratum if: \begin{enumerate} \item the algebra $E = F[\beta]$ is a field,
\item $E^\times \subset \mathfrak{K}(\mathfrak{A})$, 
\item $v_\mathfrak{A}(\beta) = -n$,
\item $r < k_0(\beta, \mathfrak{A})$.
\end{enumerate}
\end{defi}

\bigskip

\begin{rema} The easiest way to construct a simple stratum is to choose for $\beta$ a minimal element [\cite{BK}, p.44].  In the following, to construct our simple stratum, we shall take $r= 0$ and it will be very easy to check that our chosen $\beta$ is a minimal element. \end{rema}

\bigskip

Now, we can construct from an extension $E$ a simple stratum. Fix $\varpi$ verifying the equation $\varpi^m= a\varpi_F$ with $a \in \OO_F^\times$ that we may choose later in order to control the self-torsion group. Let $E= F[\varpi]$ be a field extension of $F$ of degree $m$. We use Gouvea's book \cite{gouvea} for the construction and properties of finite field extensions of a $p$-adic field, especially Chapter $5$. Denote by $v_E$ the unique valuation on $E$ extended from $v_F$; we have for all $x \in E$, $$v_E(x) =\frac{1}{m}v_p(N_{E/F}(x)),$$ with $N_{E/F} : E \longrightarrow F$ the norm map from $E$ to $F$. We also have $$v_E(\varpi) =\frac{1}{m}.$$ Therefore, $$v_E(\cdot) = \frac{1}{m}\Z.$$ This shows that the ramification index of $E/F$ is $m$ and therefore the inertia degree $f $ is $1$. Hence, by definition, $\varpi$ is a uniformizer in $E$ that we shall denote $\varpi_E$ and $E/F$ is totally ramified. Denote by $\OO_E$ the ring of integers of $E$ and by $\mathfrak{p}_E$ the unique maximal ideal of $\OO_E$ and write $k_E = \OO_E/\mathfrak{p}_E$ for the residue class field of $E$. Since $f= \frac{m}{e}$ is the index $[k_E : k_F]$, it follows that $k_E = k_F$.

Set $\beta = \varpi_E^{-1}$. This is a minimal element by Definition 1.4.14 of \cite{BK}; i.e. it satisfies: $$\gcd(mv_E(\beta), e) = \gcd(-1, m) = 1  \qquad \mathrm{ and} \qquad k_E/k_F \mathrm{\;is\;trivial}.$$ Since $E= F[\varpi_E]$, we also have $E = F[\beta]$. Moreover, since the polynomial $X^m - a\varpi_F$ satisfies Eisenstein's criterion, we have $\OO_E = \OO_F[\varpi_E]$.
 
 \bigskip 
 
 Let $\mathcal{L} = \{ L_i = \varpi_E^i\OO_E, \; i \in \Z\}$ be the unique, up to translation on the index, maximal $\OO_F$-lattice chain. Since $\varpi_E^m = a \varpi_F$ with $a \in \OO_F^\times$, we have for all $i\in \Z$, $$L_{i+m} = \varpi_F L_i.$$ 
 Now, given $\mathfrak{A}$ as in Definition \ref{definition}, $$\mathfrak{A} =  \{ x \in \mathcal{M}_m(F) \; | \; x L_i \subset L_i, i \in \Z\},$$ let us check that $E^\times \subset \mathfrak{K}(\mathfrak{A})$. Let $x \in E^\times$ and $y \in \mathfrak{A}$. Since $E$ is a field, $x$ is invertible and therefore, $x \in G= \mathrm{Aut}_F(G)$. Let $L_i \in \mathcal{L}$. Then \begin{align*} x^{-1}y x L_i & \subseteq x^{-1}y L_{i+k} \qquad \mathrm{for \; a \; certain \;} k \in \Z \\
 & \subseteq x^{-1} L_{i+k} \qquad ( y \in \mathfrak{A}) \\
 & \subseteq L_i. \end{align*} Finally, with respect to the constructed order, we have $\beta L_i = L_{i-1}$ and $\beta \notin \mathfrak{A}$. Therefore, $\nu_\mathfrak{A}(\beta)= -1$. This proves the following proposition: 
 
 \bigskip
 
\begin{prop} The 4-tuple $[\mathfrak{A}, 1 ,0, \beta]$ is a simple stratum. \end{prop}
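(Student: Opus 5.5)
We check the four defining conditions of a simple stratum in \cite{BK}, Definition 1.5.5, for $[\mathfrak{A}, 1, 0, \beta]$ with $\beta = \varpi_E^{-1}$ and $\mathfrak{A} = \mathfrak{A}(\mathcal{L})$, $\mathcal{L} = \{\varpi_E^i \OO_E\}$. All the ingredients have been set up in the preceding discussion, so the proof amounts to assembling them in order, with the only real work in condition (4).

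\emph{Conditions (1)--(3).} Since $X^m - a\varpi_F$ is Eisenstein, $E = F[\varpi_E] = F[\beta]$ is a field of degree $m$, and via Remark \ref{embedding} it sits inside $A$. This is (1). For (2), take $x \in E^\times$ and write $x = \varpi_E^k u$ with $u \in \OO_E^\times$. Then $x L_i = \varpi_E^{i+k}\OO_E = L_{i+k}$ for every $i$, so $x$ permutes the lattice chain by translation. Conjugating, $x^{-1} y x L_i \subset L_i$ for all $y \in \mathfrak{A}$, hence $x^{-1}\mathfrak{A}x \subset \mathfrak{A}$; applying the same argument to $x^{-1}$ gives equality, so $E^\times \subset \mathfrak{K}(\mathfrak{A})$. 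For (3), $\beta L_i = L_{i-1}$ for all $i$, so $\beta \in \mathfrak{P}^{-1}$. Since $L_{i-1} \not\subset L_i$, we also have $\beta \notin \mathfrak{P}^0 = \mathfrak{A}$. Hence $v_\mathfrak{A}(\beta) = -1 = -n$ with $n = 1$.

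\emph{Condition (4)} requires $0 = r < k_0(\beta, \mathfrak{A})$, and this is the step that needs an actual input. We use the fact from \cite{BK} (the discussion around Definition 1.4.14 and \S1.4) that a minimal element $\beta$ with $E = F[\beta]$ and $E^\times \subset \mathfrak{K}(\mathfrak{A})$ satisfies $k_0(\beta, \mathfrak{A}) = -v_\mathfrak{A}(\beta)$, unless $\beta \in F$, in which case $k_0 = -\infty$. It therefore suffices to show that $\beta$ is minimal. For this we check the two criteria of Definition 1.4.14:
\begin{itemize}
\item $\gcd(v_E(\beta), e(E/F)) = \gcd(-1, m) = 1$, where $v_E$ is normalized to take integer values;
\item $\varpi_F^{-v_E(\beta)}\beta^{e}$, which up to the unit $a^{-1}$ equals $\varpi_F\,\beta^m$, has image in $k_E = k_F$ generating $k_E$ over $k_F$. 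This holds trivially because the residue extension has degree $f = 1$.
\end{itemize}
Since $m \geq 2$, $\beta \notin F$. Hence $k_0(\beta, \mathfrak{A}) = -v_\mathfrak{A}(\beta) = 1 > 0 = r$, which gives (4).

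The main obstacle is this last step. One must quote the correct statement from \cite{BK} relating $k_0$ to $v_\mathfrak{A}(\beta)$ for minimal elements, and make sure the hypothesis of that statement is met: it needs $\mathfrak{A}$ to be the $E$-pure hereditary order attached to $\mathcal{L}$, which holds because $\mathcal{L}$ is an $\OO_E$-lattice chain. Getting the normalization of $v_E$ right in the gcd condition also matters; the paper's $v_E$ is $\frac1m\Z$-valued and must be rescaled to be $\Z$-valued. Once these are in place, the proposition follows immediately.
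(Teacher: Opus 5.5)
Your argument follows the paper's route. Conditions (1)--(3) are verified exactly as in the text preceding the proposition: $E=F[\beta]$ via the Eisenstein polynomial, elements of $E^\times$ translate the chain $\{\varpi_E^i\mathcal{O}_E\}$, and $\beta L_i=L_{i-1}$ with $\beta\notin\mathfrak{A}$ gives $v_{\mathfrak{A}}(\beta)=-1$. Condition (4) is reduced, as in the paper, to the minimality of $\beta$ in the sense of \cite[Definition 1.4.14]{BK}. You are slightly more complete than the paper on (2), where you obtain $x^{-1}\mathfrak{A}x=\mathfrak{A}$ rather than only the inclusion, and on (4), which the paper leaves implicit.

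There is one concrete error to fix in (4). In Bushnell--Kutzko's normalization, a non-scalar $\beta$ satisfies $k_0(\beta,\mathfrak{A})\geq v_{\mathfrak{A}}(\beta)$, with equality exactly when $\beta$ is minimal \cite[Proposition 1.4.15]{BK}. So here $k_0(\beta,\mathfrak{A})=v_{\mathfrak{A}}(\beta)=-1$, not $-v_{\mathfrak{A}}(\beta)=1$ as you assert.

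Correspondingly, the last condition of \cite[Definition 1.5.5]{BK} is $r<-k_0(\beta,\mathfrak{A})$. The paper's ``$r<k_0(\beta,\mathfrak{A})$'' is a misprint. The paper itself confirms this later, when it invokes \cite[Proposition 1.4.15]{BK} to get ``$r=1$'' (with $r=-k_0(\beta,\mathfrak{A})$) in computing $J$.

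Your two sign slips cancel, so the conclusion survives. As written, though, you verify a misstated inequality using a false value of $k_0$; with the true value $-1$, the inequality as printed in the paper would fail. The correct final step is: $\beta$ is minimal and $\beta\notin F$, hence $k_0(\beta,\mathfrak{A})=-1$ and $r=0<1=-k_0(\beta,\mathfrak{A})$.
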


\begin{rema}\label{cool} Identifying $A$ with $\mathcal{M}_m(F)$, we have $\mathfrak{A} = \mathcal{M}_m(\OO_F)$ and 
\begingroup\setlength{\arraycolsep}{1,2pt}\renewcommand{\arraystretch}{0.9}  $ \beta =~\begin{pmatrix} 0 & 1 & \dots & 0 \\ \vdots & \ddots & \ddots & \vdots \\ 0 &&\ddots & 1\\ \scriptstyle{(a\varpi_F)^{-1}} & 0 & \dots & 0 \end{pmatrix}$. \endgroup \end{rema}

\subsection{Construction of a simple type} We now use the previous step to construct a compact open subgroup $J$ of $\GL(m,F)$ and a character $\lambda$ of $J$. Fix a nontrivial additive character $\psi : F \longmapsto \C^\times$ that is trivial on $\mathfrak{p}_F$ but not on $\OO_F$ and consider the linear character $\psi_\beta$ of $1 + \mathfrak{P}$ defined by $$\psi_\beta(1+x) = \psi(\mathrm{Tr}_{A/F}(\beta x)), \qquad x \in \mathfrak{P},$$ where $\mathrm{Tr}_{A/F}$ is the trace map. Since $\beta \in \mathfrak{P}^{-1}$, $\psi_\beta$ is a character of $U^1(\mathfrak{A})/ U^2(\mathfrak{A})$  where $$ U^0(\mathfrak{A}) = \mathfrak{A}^\times  \qquad \mathrm{and} \qquad U^n(\mathfrak{A}) = 1 + \mathfrak{P}^n, \qquad n \geq 1,$$ is a sequence of compact open subgroups of $G$.\\

For $k \geq 0$, set 
$$J^k = J^k(\beta, \mathfrak{A}) = \left(\mathfrak{B}_\beta + \mathfrak{P} \right) \cap U^k\left(\mathfrak{A}\right)$$ with $\mathfrak{B}_\beta = B_\beta \cap \mathfrak{A}$, where $B_\beta$ is the $A$-centralizer of $\beta$. This is a special case of \cite{BK},~Definition~3.1.8 with $n=1$. For $k=0$, $J^0$ will be denoted by $J = J(\mathfrak{A}, \beta)$.

\bigskip

\begin{prop} We have $J = \OO_E^\times U^1(\mathfrak{A})$. \end{prop}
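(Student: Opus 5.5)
We show $J = \mathfrak{B}_\beta^\times\, U^1(\mathfrak{A})$ and then identify $\mathfrak{B}_\beta^\times$ with $\OO_E^\times$. By definition, $J = J^0 = (\mathfrak{B}_\beta + \mathfrak{P}) \cap U^0(\mathfrak{A}) = (\mathfrak{B}_\beta + \mathfrak{P}) \cap \mathfrak{A}^\times$.

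\emph{Identifying $\mathfrak{B}_\beta$.} Since $E = F[\beta]$ is a field of degree $m = \dim_F V$, the centralizer $B_\beta$ of $\beta$ in $A = \mathrm{End}_F(V)$ is $E$ itself. Indeed, $V \simeq E$ is a one-dimensional $E$-vector space, so $\mathrm{End}_E(V) = E$. Hence
\[
\mathfrak{B}_\beta = E \cap \mathfrak{A}.
\]
An element $x \in E$ lies in $\mathfrak{A}$ exactly when $x \varpi_E^i \OO_E \subset \varpi_E^i \OO_E$ for all $i$, that is, when $x \in \OO_E$. So $\mathfrak{B}_\beta = \OO_E$, and $J = (\OO_E + \mathfrak{P}) \cap \mathfrak{A}^\times$.

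\emph{The inclusion $\supseteq$.} Both $\OO_E^\times$ and $U^1(\mathfrak{A}) = 1 + \mathfrak{P}$ lie in $\mathfrak{A}^\times$. Since $E^\times \subset \mathfrak{K}(\mathfrak{A})$, the group $\OO_E^\times$ normalizes $\mathfrak{P}$, so $\OO_E^\times U^1(\mathfrak{A})$ is a group. Moreover $\mathfrak{P}$ is a two-sided ideal of $\mathfrak{A}$ and $\OO_E \subset \mathfrak{A}$, so for $u \in \OO_E^\times$ and $p \in \mathfrak{P}$ we get $u(1+p) = u + up \in \OO_E + \mathfrak{P}$.

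\emph{The inclusion $\subseteq$.} Let $g = b + p$ with $b \in \OO_E$, $p \in \mathfrak{P}$, and $g \in \mathfrak{A}^\times$. The key step is to show $b \in \OO_E^\times$. I would argue with the residue map: the lattice chain has period $e = m$ with consecutive quotients $L_i/L_{i+1} = \varpi_E^i \OO_E/\varpi_E^{i+1}\OO_E$ of dimension one over $k_F$. Therefore $\mathfrak{A}/\mathfrak{P} \simeq \prod_{i=0}^{m-1} \mathrm{End}_{k_F}(L_i/L_{i+1}) \simeq k_F^m$, and an element of $\mathfrak{A}$ is a unit iff its image in $\mathfrak{A}/\mathfrak{P}$ is a unit.

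Since $g \in \mathfrak{A}^\times$ and $g \equiv b \pmod{\mathfrak{P}}$, the image of $b$ is invertible. In particular $b$ acts invertibly on $L_0/L_1 = \OO_E/\mathfrak{p}_E$, so $b \notin \mathfrak{p}_E$, and hence $b \in \OO_E^\times$. Then $g = b(1 + b^{-1}p)$ with $b^{-1}p \in \mathfrak{P}$, because $\mathfrak{P}$ is an ideal of $\mathfrak{A}$ and $b^{-1} \in \mathfrak{A}$. This gives $g \in \OO_E^\times U^1(\mathfrak{A})$.

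The main care needed is in the first step, justifying $B_\beta = E$, and in the unit criterion modulo $\mathfrak{P}$. For the latter, a more elementary route avoids the quotient description: if $b \in \mathfrak{p}_E$, then $b \in \mathfrak{P}$, so $g \in \mathfrak{P}$. But $\mathfrak{P}$ is a proper ideal, so it contains no unit of $\mathfrak{A}$; for instance $g L_0 \subset L_1 \neq L_0$, contradicting $g L_0 = L_0$. This suffices.
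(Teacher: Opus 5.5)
Your proof is correct, but it establishes the key factorization differently from the paper. The paper does not work from the definition of $J^0$ directly. It quotes Bushnell--Kutzko, Proposition 3.1.15, in the form $J^0(\beta,\mathfrak{A}) = U^0(\mathfrak{B}_\beta)\,J^{\lfloor (1+r)/2\rfloor}(\beta,\mathfrak{A})$, and uses minimality of $\beta$ to get $J = \mathfrak{B}_\beta^\times J^1$. It then only computes $\mathfrak{B}_\beta^\times = \OO_E^\times$ and $J^1 = (\mathfrak{B}_\beta+\mathfrak{P})\cap(1+\mathfrak{P}) = U^1(\mathfrak{A})$.

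You instead prove both inclusions by hand from $J = (\OO_E+\mathfrak{P})\cap\mathfrak{A}^\times$. The crux is that $\OO_E\cap\mathfrak{P} = \mathfrak{p}_E$ and that $\mathfrak{P}$ contains no unit of $\mathfrak{A}$. Hence in $g = b+p$ the component $b$ must lie in $\OO_E^\times$, and $g = b(1+b^{-1}p)$. This is the special case of the general Bushnell--Kutzko argument made explicit; it is self-contained and shows exactly why the decomposition holds here.

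Your computation of $\mathfrak{B}_\beta$ is also more careful than the paper's. You compute $E\cap\mathfrak{A}$ from the chain $L_i=\varpi_E^i\OO_E$. The paper instead passes through its identification $\mathfrak{A}=\mathcal{M}_m(\OO_F)$, which is really the maximal order stabilizing $L_0$ alone. The period-$m$ chain gives an Iwahori-type order, and $\varpi_E$ does not normalize $\mathcal{M}_m(\OO_F)$ for $m\geq 2$. The conclusion $\mathfrak{B}_\beta=\OO_E$ is the same either way, since an element of $E$ stabilizes every $L_i$ exactly when it stabilizes $L_0$.

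What the citation buys the paper is brevity and placement in the general theory. The cited result covers arbitrary simple strata, where $J^1$ need not equal $U^1(\mathfrak{A})$ and a direct computation like yours would be less straightforward.
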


\bigskip

\begin{proof} Proposition 3.1.15 from \cite{BK} says that $J^0(\beta, \mathfrak{A}) = U^0(\mathfrak{B}_\beta)J^{\left\lfloor \frac{1+r}{2} \right \rfloor} (\beta, \mathfrak{A})$. This is where $\beta$ being taken minimal is useful: by [\cite{BK}, Proposition, 1.4.15] $r =1$. Therefore, $$J(\beta, \mathfrak{A}) = \mathfrak{B}_\beta^\times J^1 (\beta, \mathfrak{A}).$$ The $A$-centralizer $B_\beta$ of $\beta$ is equal to $E$. Indeed, using the embedding in Remark \ref{embedding}, we have $$B_\beta = \{ T \in \mathrm{End}_F(V) \; | \; T(\beta v ) = \beta T(v), v \in V\}.$$ So $B_\beta$ is the set of $E$-linear endomorphisms, $\mathrm{End}_E(V)$. Since $V$ is an $F$-vector space of dimension~$m$, $\mathrm{dim}_E(V) = 1$. Therefore, $\mathrm{End}_E(V) = E$ and $B_\beta = E$. Because of Remarks \ref{embedding} and \ref{cool}, we have $\mathfrak{B}_\beta = B_\beta \cap \mathfrak{A} = E \cap \mathcal{M}_m(\OO_F) = \OO_E$ and $$\mathfrak{B}_\beta^\times = \OO_E^\times.$$

Now, we can compute $ J^1 (\beta, \mathfrak{A})$, by definition, $J^1 =\left(\mathfrak{B}_\beta + \mathfrak{P}\right) \cap \left( 1+ \mathfrak{P}\right)$. Therefore, $$J^1 = 1+ \mathfrak{P} = U^1(\mathfrak{A})$$ and  $J = \mathfrak{B}_\beta^\times J^1 = \OO_E^\times U^1(\mathfrak{A}).$
\end{proof}

Returning to the character $\psi_\beta$ on $U^1(\mathfrak{A})/ U^2(\mathfrak{A})$, we can extend it trivially to $U^1(\mathfrak{A})$. Let $\phi \in \widehat{\OO_E^\times}$ be an arbitrary character trivial on $ 1 + \mathfrak{p}_E$. Since $\OO_E^\times \cap U^1(\mathfrak{A})= 1 + \mathfrak{p}_E$, we can set $\lambda = \phi \otimes \psi_\beta$, which is a character on $J= \OO_E^\times U^1(\mathfrak{A})$. 

\bigskip

\begin{prop}\label{type} The pair $(J, \lambda)$ constructed above is a simple type in the sense of \textup{[}\cite{BK}, Definition 5.5.10 (a)\textup{]}. \end{prop}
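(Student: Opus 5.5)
We verify the clauses of Bushnell--Kutzko's Definition 5.5.10(a) directly for our data; the minimality of $\beta$ reduces everything to the level-$1$ case. Recall that a simple type of positive level is a pair $(J(\beta,\mathfrak{A}),\lambda)$ with $[\mathfrak{A},n,0,\beta]$ simple and $\lambda\simeq\kappa\otimes\sigma$. Here $\kappa$ is a $\beta$-extension of the Heisenberg representation $\eta$ attached to a simple character $\theta\in\mathcal{C}(\mathfrak{A},0,\beta)$. Further, $\sigma$ is inflated from $J/J^1\cong U(\mathfrak{B}_\beta)/U^1(\mathfrak{B}_\beta)$, and when $\mathfrak{B}_\beta$ is a maximal order of $B_\beta$ it must be cuspidal. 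The proposition above already gives $[\mathfrak{A},1,0,\beta]$ simple, and the previous proposition gives $J=\OO_E^\times U^1(\mathfrak{A})$ with $J^1=U^1(\mathfrak{A})$. So we only need to identify the three ingredients $\theta$, $\kappa$ and $\sigma$.

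First, since $\beta$ is minimal and $n=1$, we have $H^1(\beta,\mathfrak{A})=J^1(\beta,\mathfrak{A})=U^1(\mathfrak{A})$. By the description of simple characters in the minimal case (BK, Definition 3.2.1 and \S3.2 with $k_0=-n$), $\mathcal{C}(\mathfrak{A},0,\beta)$ consists of the characters $\theta$ of $U^1(\mathfrak{A})$ with $\theta|_{U^1(\mathfrak{A})\cap B_\beta^\times}$ factoring through $\det_{B}=N_{E/F}$, and $\theta|_{U^{[n/2]+1}(\mathfrak{A})}=\psi_\beta$. Since $[n/2]+1=1$, this forces $\theta=\psi_\beta$ on $U^1(\mathfrak{A})$. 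The extension of $\psi_\beta$ trivially across $U^2(\mathfrak{A})$ is consistent, because $\psi_\beta$ is trivial on $U^2(\mathfrak{A})$: indeed $\beta\mathfrak{P}^2\subset\mathfrak{P}$ and $\mathrm{Tr}(\mathfrak{P})\subset\mathfrak{p}_F$. On $1+\mathfrak{p}_E$, $\psi_\beta(1+x)=\psi(\mathrm{Tr}_{E/F}(\beta x))$, and $\beta x\in\OO_E$. This factors through $N_{E/F}$ on $1+\mathfrak{p}_E$ modulo $\mathfrak{p}_F$-level, which is the routine compatibility I would check with $\psi$ of conductor $\mathfrak{p}_F$.

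Second, because $J^1=H^1$, the Heisenberg representation $\eta$ is just $\theta$ itself, a character, so $\dim\eta=1$. A $\beta$-extension $\kappa$ is then a character of $J$ extending $\theta$ whose intertwining contains $B_\beta^\times=E^\times$. I would take $\kappa$ to be the extension of $\psi_\beta$ that is trivial on $\OO_E^\times$ modulo $1+\mathfrak{p}_E$. More precisely, use a Teichmüller splitting $\OO_E^\times=\mu_{q-1}\times(1+\mathfrak{p}_E)$ (recall $k_E=k_F$), and extend $\psi_\beta$ via $\mu_{q-1}$ acting trivially. This is well defined because $\mu_{q-1}\subset F^\times$ is central and normalizes $\psi_\beta$. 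For intertwining by $E^\times$: $E^\times$ normalizes $J$ and fixes $\psi_\beta$, since $\beta$ commutes with $E$. Hence $\kappa$ is normalized by $E^\times$, and in particular intertwined by it.

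Third, set $\sigma=\phi$, viewed as a character of $J/J^1=\OO_E^\times/(1+\mathfrak{p}_E)\cong k_E^\times=k_F^\times=\GL(1,k_E)$. Every irreducible representation of $\GL(1)$ over a finite field is cuspidal, and $\mathfrak{B}_\beta=\OO_E$ is maximal in $E$. Hence $\lambda=\kappa\otimes\sigma=\phi\otimes\psi_\beta$, which matches the definition of $\lambda$. The main obstacle is the first step: checking carefully that $\psi_\beta$ really lies in $\mathcal{C}(\mathfrak{A},0,\beta)$, i.e.\ that its restriction to $1+\mathfrak{p}_E$ factors through $N_{E/F}$ as BK require. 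If the direct check is awkward, I would instead invoke BK, Proposition 3.2.4 / \S3.3 for minimal strata, which says that when $\beta$ is minimal every extension of $\psi_\beta|_{U^{[n/2]+1}}$ of the prescribed form is simple. In our case that extension is forced, since $[n/2]+1=1$.
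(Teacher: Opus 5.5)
Your architecture works, but the step you call the main obstacle is misstated, and as stated it is false. In Bushnell--Kutzko's definition of $\mathcal{C}(\mathfrak{A},0,\beta)$, the condition on $H^1\cap B_\beta^\times$ is that $\theta$ factor through $\det_B$, the reduced norm of $B_\beta=\mathrm{End}_E(V)$ \emph{over $E$}, not through $N_{E/F}$. Here $\dim_E V=1$, so $B_\beta=E$ and $\det_B$ is the identity of $E^\times$; the condition is empty. Moreover, since $n=1$ you are in the range $m=0\geq[n/2]$ of BK's definition (\S3.2). There $H^1=U^1(\mathfrak{A})$ and $\mathcal{C}(\mathfrak{A},0,\beta)=\{\psi_\beta\}$ outright, so your fallback is the right argument.

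The $N_{E/F}$ check you call routine would fail for $m\geq 2$. Write $y=\sum_{j<m}c_j\varpi_E^j$ with $c_j\in\mathcal{O}_F$; since $\mathrm{Tr}_{E/F}(\varpi_E^j)=0$ for $1\le j\le m-1$, you get $\mathrm{Tr}_{E/F}(y)=mc_0$. Hence $\psi_\beta(1+\varpi_E c)=\psi(mc)$, which is $\neq 1$ for suitable $c\in\mathcal{O}_F$ because $p\nmid m$. On the other hand, $N_{E/F}(1+\varpi_E c)\in 1+\mathfrak{p}_F$ equals $z^{-m}$ for some $z\in 1+\mathfrak{p}_F\subset U^2(\mathfrak{A})$, on which $\psi_\beta$ is trivial. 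So $(1+\varpi_E c)z\in\ker N_{E/F}$ while $\psi_\beta\bigl((1+\varpi_E c)z\bigr)=\psi(mc)\neq 1$. Drop that check and simply cite the definition.

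With that correction your proof is complete. It differs from the paper's only in how $\lambda$ is split.
\begin{itemize}
\item The paper takes $\sigma$ trivial and checks directly that $\lambda$ is itself a $\beta$-extension: it restricts to $\psi_\beta$ on $J^1$ and is normalized by $E^\times$. It tacitly assumes that $\psi_\beta$ is the relevant simple character.
\item You fix the particular $\beta$-extension $\kappa$ that is trivial on $\mu_{q-1}$, and put $\phi$ into $\sigma$ as a (necessarily cuspidal) character of $\GL(1,k_E)$.
\end{itemize}
These are the same statement, because the $\beta$-extensions of $\psi_\beta$ are exactly the twists of one of them by characters of $\mathcal{O}_E^\times/(1+\mathfrak{p}_E)$ (BK, \S5.2).

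Your route buys two things. First, it makes the simple character and the (one-dimensional) Heisenberg representation explicit. Second, your direct product decomposition $J=\mu_{q-1}\times U^1(\mathfrak{A})$ is what actually makes $\lambda$ well defined. The trace computation above shows that $\psi_\beta$ is nontrivial on $\mathcal{O}_E^\times\cap U^1(\mathfrak{A})=1+\mathfrak{p}_E$. So the recipe ``$\phi$ trivial on $1+\mathfrak{p}_E$, $\lambda=\phi\otimes\psi_\beta$ on $\mathcal{O}_E^\times U^1(\mathfrak{A})$'' does not glue if read literally. Your claim that $\kappa\otimes\sigma$ ``matches the definition of $\lambda$'' is correct for the intended reading: $\phi$ on $\mu_{q-1}$ and $\psi_\beta$ on $U^1(\mathfrak{A})$. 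You should say so explicitly.
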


\bigskip 

\begin{proof} Recall that $(J, \lambda)$ is a simple type in $G = \GL(m,F)$ if $\lambda$ is a $\beta$-extension of the character $\psi_\beta$ on $J^1$. According to [\cite{BK}, Definition 5.2.1], this means first that $\lambda_{|J^1} = \psi_\beta$, which is the case by construction, and second, that $\lambda$ is intertwined by $E^\times$.  Let $x \in \mathfrak{P}$ and $y  \in E^\times$. We have \begin{align*} \psi_\beta( y^{-1}(1+x)y) &= \psi(\mathrm{Tr}_{A/F}(\beta y^{-1}xy)) \\
&=   \psi(\mathrm{Tr}_{A/F}( y^{-1}\beta xy)) \qquad (\mathrm{since} \; y \in B_\beta) \\
&=  \psi(\mathrm{Tr}_{A/F}(\beta x)) \\
& = \psi_\beta(1+x). \end{align*} and we can extend $\phi$ trivially to $E^\times$, then $\phi(y^{-1}xy) = \phi(x)$. So $\lambda$ is indeed a simple type.
\end{proof}

\begin{rema} The $\sigma$ that appears in the definition of a type in [\cite{BK}, Definition 5.5.10] is here taken to be trivial. \end{rema}

\bigskip

The following statement is now a consequence of [\cite{BK}, Theorem 6.2.2]: 

\medskip

\begin{theo} Any irreducible representation $\pi$ of $G=\GL(m, F)$ containing $\lambda$ is supercuspidal. Moreover, for any such representation $\pi$, there is a uniquely determined representation $\Lambda$ of $E^\times J$ such that $\Lambda_{|J} = \lambda$ and $$\pi = c-\Ind_{E^\times J}^G \Lambda,$$ where $c-\Ind_{E^\times J}^G$ denotes compact induction from $E^\times J$ to $G= \GL(m,F)$. \end{theo}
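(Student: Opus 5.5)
The statement follows from \cite{BK}, Theorem 6.2.2, once we know that $(J,\lambda)$ is a simple type attached to a simple stratum (Proposition \ref{type}). The plan is therefore to check the hypotheses of that theorem in our setting and then identify the group $\mathbf{J}$ and its representation $\Lambda$ in our special case. The first step is to recall the general statement. If $(J,\lambda)$ is a maximal simple type, i.e. $B_\beta^\times$ has $\mathfrak{B}_\beta$ as a maximal order, then any irreducible smooth $\pi$ containing $\lambda$ contains a unique extension $\Lambda$ of $\lambda$ to $\mathbf{J}=E^\times J$, and $\pi\simeq c\text{-}\Ind_{\mathbf{J}}^G\Lambda$, which is irreducible and supercuspidal. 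In our case $B_\beta=E$ and $\mathfrak{B}_\beta=\OO_E$, which is trivially a maximal (and the unique) hereditary order in the field $E$. So the type is maximal, and $\sigma$ being trivial matches the remark after Proposition \ref{type}.

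The second step is to check that $\mathbf{J}=E^\times J$ is a group and that it is the $G$-normalizer of $\lambda$ modulo the intertwining. $E^\times$ normalizes $J=\OO_E^\times U^1(\mathfrak{A})$, since $E^\times\subset\mathfrak{K}(\mathfrak{A})$ normalizes $U^1(\mathfrak{A})$ and commutes with $\OO_E^\times$. It also normalizes $\lambda$, by the computation in the proof of Proposition \ref{type}. Since $E=F[\varpi_E]$ is totally ramified, $E^\times=\langle\varpi_E\rangle\times\OO_E^\times$, so $E^\times J=\langle\varpi_E\rangle J$, and $\varpi_E^m=a\varpi_F$ is central modulo $J$. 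The intertwining of $\lambda$ is $J B_\beta^\times J=E^\times J$ (\cite{BK}, Proposition 5.1.8 and the $\beta$-extension property). Hence any extension $\Lambda$ of $\lambda$ to $\mathbf{J}$ has intertwining exactly $\mathbf{J}$. By the standard Mackey criterion, $c\text{-}\Ind_{\mathbf{J}}^G\Lambda$ is then irreducible, and since $\mathbf{J}$ is compact mod center it is supercuspidal.

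The third step gives existence and uniqueness of $\Lambda$. If $\pi\supset\lambda$, then $\mathrm{Hom}_J(\lambda,\pi)$ is a nonzero finite-dimensional representation space of $\mathbf{J}/J\cong\Z$, generated by $\varpi_E$. It therefore contains an eigenvector, which gives an extension $\Lambda$ with $\mathrm{Hom}_{\mathbf{J}}(\Lambda,\pi)\neq0$. Frobenius reciprocity then gives a nonzero map $c\text{-}\Ind\Lambda\to\pi$, which is an isomorphism by irreducibility of both sides. For uniqueness, suppose $\Lambda'$ is another such extension. Then $\pi$ contains both, so $\Lambda$ and $\Lambda'$ intertwine in $G$. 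The intertwining lies in $\mathbf{J}$, which forces $\Lambda\simeq\Lambda'$.

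The main obstacle is the intertwining computation $I_G(\lambda)=E^\times J$. I would not reprove it but import it from \cite{BK} (Theorem 3.3.2 for $\psi_\beta$, then Proposition 5.1.8 for $\beta$-extensions), checking only that our explicit $J$, $J^1$, $\psi_\beta$ coincide with the objects there in the minimal case $n=1$, $r=0$.
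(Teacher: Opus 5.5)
Your proposal is correct and follows the same route as the paper, which simply invokes \cite{BK}, Theorem 6.2.2, for the maximal simple type $(J,\lambda)$. What you add is accurate and makes explicit what the paper leaves inside that citation: the check that the type is maximal ($B_\beta=E$, $\mathfrak{B}_\beta=\OO_E$ is the only hereditary order, and $\sigma$ is a character of $k_E^\times$, hence cuspidal), and the unpacking of the theorem via $I_G(\lambda)=E^\times J$, Mackey irreducibility, an eigenvector of $\varpi_E$ on the $\lambda$-isotypic part for existence of $\Lambda$, and the intertwining bound for its uniqueness.
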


Let us note that a representation $\Lambda$ of $E^\times J$ such that $\Lambda_{|J} = \lambda$ is uniquely determined by a choice of $\Lambda (\varpi_E)$ since $E^\times J$ is generated by $\varpi_E$ and $J$.

\bigskip

\subsection{Self-torsion group of $\pi$}

 First, we give a concrete description of the self-torsion group of a supercuspidal representation of the kind that we have just constructed. After this, we will fix two extensions $E_1$ and $E_2$ to obtain $\pi_{m+1}$ and $\pi_{m+2}$ and control their self-torsion groups. Then, we will give a quick discussion on $\pi_1$ and its self-torsion group.

\bigskip 

\begin{prop}\label{propselftorsiongroup} Let $\pi= c-\Ind_{E^\times J}^G \Lambda$ be a supercuspidal representation obtained by the previous construction. Then
$$X(\pi) = \{ \chi \in \widehat{F^\times} \; | \; \chi \circ N_{E/F} =1 \}.$$
\end{prop}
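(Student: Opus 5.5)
We prove the two inclusions separately, working throughout with the description $\pi = c\text{-}\Ind_{E^\times J}^G \Lambda$. The basic identity is
\[
\pi\otimes(\chi\circ\det)\simeq c\text{-}\Ind_{E^\times J}^G\bigl(\Lambda\otimes(\chi\circ\det)_{|E^\times J}\bigr),
\]
which holds because twisting commutes with compact induction. The question thus reduces to comparing $\Lambda$ and $\Lambda\otimes\chi\circ\det$ as characters of $E^\times J$.

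For the inclusion $\supseteq$, let $\chi\circ N_{E/F}=1$. For $y\in E^\times$ (embedded as in Remark \ref{embedding}) we have $\det(y)=N_{E/F}(y)$, so $\chi\circ\det$ is trivial on $E^\times$. On $U^1(\mathfrak{A})$ we have $\det(U^1(\mathfrak{A}))\subset 1+\mathfrak{p}_F$. Since $E/F$ is totally ramified, $N_{E/F}(1+\mathfrak{p}_E)$ has finite index in $1+\mathfrak{p}_F$. We need $\chi$ to be trivial on $1+\mathfrak{p}_F$, i.e. tamely ramified. I plan to get this from $\chi^m=\chi(N_{E/F}(x))=1$ for $x\in F^\times$. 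Then $\chi$ has order dividing $m$, and since $m\mid q-1$ is prime to $p$ while $1+\mathfrak{p}_F$ is pro-$p$, $\chi$ is trivial on $1+\mathfrak{p}_F$. In the general case I would instead use local class field theory: $\chi\circ N_{E/F}=1$ means $\chi$ is trivial on $N(E^\times)$, which contains $N(1+\mathfrak p_E)$ and $N(\OO_E^\times)$. Since $J=\OO_E^\times U^1(\mathfrak{A})$, we then get $\chi\circ\det=1$ on $E^\times J$, so $\Lambda\otimes\chi\circ\det=\Lambda$ and $\chi\in X(\pi)$.

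For the inclusion $\subseteq$, suppose $\pi\otimes\chi\simeq\pi$. Then $\pi$ contains both $\lambda$ and $\lambda\otimes\chi\circ\det$, and these are two simple characters (types) on the same group $J$. By the intertwining implies conjugacy theorem of \cite{BK}, more precisely the uniqueness part of Theorem 6.2.2 together with the fact that $J$-intertwining of $\lambda$ is $E^\times J$, the representation $\Lambda\otimes\chi\circ\det$ must be conjugate to $\Lambda$ under $G$. The conjugating element then lies in the normalizer, hence in $E^\times J$, so $\Lambda\otimes\chi\circ\det\simeq\Lambda$. Since $\Lambda$ is a character, this gives $\chi\circ\det=1$ on $E^\times J$, and in particular $\chi(N_{E/F}(y))=\chi(\det y)=1$ for all $y\in E^\times$.

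The main obstacle is this uniqueness step. One has to justify that a supercuspidal representation determines its extended type $(E^\times J,\Lambda)$ up to $G$-conjugacy, and that the relevant intertwining set $I_G(\lambda)=E^\times J$ forces the conjugating element into $E^\times J$. I would invoke \cite{BK}, Theorem 6.2.2 (uniqueness of $\Lambda$) combined with Proposition 5.5.11 (intertwining of simple types). A secondary point is the tameness of $\chi$ on $1+\mathfrak{p}_F$ in the first inclusion, which I would handle via the order argument above.
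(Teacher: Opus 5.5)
Your skeleton matches the paper's. Twisting commutes with compact induction, so everything reduces to whether $\Lambda\simeq\Lambda\otimes(\chi\circ\det)$, i.e.\ whether $\chi\circ\det$ is trivial on $E^\times J=E^\times U^1(\mathfrak A)$; this uses $\det|_{E^\times}=N_{E/F}$ and $\det U^1(\mathfrak A)=1+\mathfrak p_F$. Your proof of $\supseteq$ is a valid variant of the paper's, which instead gets $\det(E^\times J)=N_{E/F}(E^\times)$ from $N_{E/F}(1+\mathfrak p_E)=1+\mathfrak p_F$. You argue that $\chi\circ N_{E/F}=1$ gives $\chi^m=1$, and that $1+\mathfrak p_F$ is pro-$p$. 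The hypothesis you actually use is $p\nmid m$, not $m\mid q-1$. Delete the ``general case'' sentence, though. Knowing that $\chi$ kills $N_{E/F}(1+\mathfrak p_E)$ says nothing about $\chi$ on $1+\mathfrak p_F$. Moreover, for $p\mid m$ the identity can genuinely fail. For example, take $m=p$ with $\mu_p\subset F$: the nontrivial characters of $F^\times/N_{E/F}(E^\times)$ are then wildly ramified, of level $\ge 1>1/m=\ell(\pi)$, so they cannot fix $\pi$.

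The genuine gap is in $\subseteq$, exactly at the step you flag: ``the conjugating element lies in the normalizer, hence in $E^\times J$'' is false.
\begin{itemize}
\item Since $k_E=k_F$, we have $\OO_E^\times=\mu_{q-1}(1+\mathfrak p_E)$ with $\mu_{q-1}\subset F^\times$. So $J=\mu_{q-1}U^1(\mathfrak A)$ is the preimage of the scalars in the abelian group $U(\mathfrak A)/U^1(\mathfrak A)\cong(k_F^\times)^m$. Here the chain $\{\varpi_E^i\OO_E\}$ has period $m$, so $U(\mathfrak A)$ is an Iwahori subgroup.
\item Hence all of $U(\mathfrak A)$ normalizes $J$, and such elements need not fix $\Lambda$. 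For $m=2$ (so $p$ odd), $x=\diag(-1,1)$ satisfies $x\varpi_Ex^{-1}=-\varpi_E$, so it even normalizes $E^\times J$, yet ${}^x\psi_\beta=\psi_{-\beta}\neq\psi_\beta$.
\item Nor does $I_G(\lambda)=E^\times J$ help directly. It controls elements intertwining $\lambda$ with itself, not elements carrying $\lambda$ to $\lambda\otimes\chi\circ\det$. You have also not checked that the latter is even a simple type.
\end{itemize}
The paper's proof has the same weak point: it asserts that every $x\in U(\mathfrak A)$ fixes $\Lambda$, but justifies this only for $x\in E^\times J$.

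The missing idea is the central character.
\begin{itemize}
\item $\pi\simeq\pi\otimes\chi\circ\det$ forces $\chi^m=1$, so $\chi$ is tame because $p\nmid m$.
\item Hence $\chi\circ\det$ is trivial on $U^1(\mathfrak A)$ and on the scalars $\mu_{q-1}$ (as $\det\zeta=\zeta^m$), i.e.\ on all of $J$.
\item So $\lambda\otimes\chi\circ\det=\lambda$. Thus $\Lambda$ and $\Lambda\otimes\chi\circ\det$ are two extensions of the same $\lambda$ to $E^\times J$, both compactly inducing to $\pi$.
\item The uniqueness clause of \cite{BK}, Theorem 6.2.2, then gives $\Lambda\otimes\chi\circ\det=\Lambda$, with no conjugation argument at all.
\end{itemize}
Equivalently, any Mackey intertwiner $g$ of $\Lambda$ with $\Lambda\otimes\chi\circ\det$ now intertwines $\psi_\beta$ with itself. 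So $g\in I_G(\psi_\beta)=E^\times U^1(\mathfrak A)$, and such $g$ fixes the character $\Lambda$. Restricting to $E^\times$, as you do, then yields $\chi\circ N_{E/F}=1$.
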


\bigskip

\begin{lemm} Let $\pi= c-\Ind_{E^\times J}^G \Lambda$ be as above. Then $\chi \in X(\pi)$ if and only if~$\Lambda \simeq~\Lambda \otimes~(\chi \circ \det_A)_{|E^\times J}$.  \end{lemm}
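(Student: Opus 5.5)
The plan is to use the identity
\[
\pi\otimes(\chi\circ\det) \simeq c\text{-}\mathrm{Ind}_{E^\times J}^G\bigl(\Lambda\otimes(\chi\circ\det)_{|E^\times J}\bigr),
\]
together with the uniqueness part of [\cite{BK}, Theorem 6.2.2] quoted above.

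The first step is to verify this identity. The character $\chi\circ\det$ is defined on all of $G$, so tensoring commutes with compact induction: the map $f\mapsto (\chi\circ\det)\cdot f$ is a $G$-equivariant isomorphism
\[
c\text{-}\mathrm{Ind}_{E^\times J}^G\Lambda\otimes(\chi\circ\det)\;\longrightarrow\; c\text{-}\mathrm{Ind}_{E^\times J}^G\bigl(\Lambda\otimes(\chi\circ\det)_{|E^\times J}\bigr).
\]
This immediately gives the implication ``$\Leftarrow$'': if $\Lambda\simeq\Lambda\otimes(\chi\circ\det_A)_{|E^\times J}$, the two induced representations are isomorphic, so $\chi\in X(\pi)$.

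For ``$\Rightarrow$'', assume $\pi\otimes(\chi\circ\det)\simeq\pi$. Set $\Lambda'=\Lambda\otimes(\chi\circ\det)_{|E^\times J}$; by the identity above, $\pi\simeq c\text{-}\mathrm{Ind}_{E^\times J}^G\Lambda'$. The aim is to apply the uniqueness statement of the theorem, which requires showing that $\pi$ contains $\lambda$ and that $\Lambda'_{|J}=\lambda$.

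The first point holds because $\Lambda_{|J}=\lambda$ and Frobenius reciprocity for compact induction from an open subgroup give $\pi_{|J}\supset\lambda$. The second point says $\lambda\otimes(\chi\circ\det)_{|J}=\lambda$, i.e.\ that $\chi\circ\det$ is trivial on $J$, and this is not automatic. This is the step I expect to be the main obstacle. I would not try to prove triviality on $J$ directly.

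Instead, I would use that $\pi$ contains both $\lambda$ and $\lambda'=\Lambda'_{|J}$. By [\cite{BK}, Theorem 5.7.1] / the intertwining-implies-conjugacy results for simple types (Theorem 5.7.1 there), two simple characters or types contained in the same irreducible representation intertwine, hence are conjugate. For our minimal level-one stratum, the intertwining of $\lambda$ is exactly $E^\times J$ (by [\cite{BK}, Proposition 5.5.11]), so $\lambda'$ is conjugate to $\lambda$ by an element normalizing $J$ within $E^\times J$. Since $E^\times J$ normalizes $\lambda$, this forces $\lambda'=\lambda$.

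With $\Lambda'_{|J}=\lambda$ established, $\Lambda'$ is a representation of $E^\times J$ extending $\lambda$ whose induction is $\pi$. The uniqueness of $\Lambda$ in the theorem then yields $\Lambda'\simeq\Lambda$, completing ``$\Rightarrow$''.

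A concrete fallback for the obstacle is to check that $\det(J)=\det(\OO_E^\times U^1(\mathfrak{A}))$. On $U^1(\mathfrak{A})$ the determinant lands in $1+\mathfrak{p}_F$, and on $\OO_E^\times$ it equals $N_{E/F}(\OO_E^\times)$, which covers $\OO_F^\times$ up to $(1+\mathfrak{p}_F)$ since $k_E=k_F$. One would then compare $\lambda$ with $\lambda\otimes\chi\circ\det$ via their restrictions to $U^1$ (where $\psi_\beta$ is determined) and to $\OO_E^\times$ (where $\phi$ is twisted by $\chi\circ N_{E/F}$), using the intertwining argument only to handle $\phi$.
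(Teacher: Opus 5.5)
Your identity $\pi\otimes(\chi\circ\det)\simeq c\text{-}\Ind_{E^\times J}^G\Lambda'$ and the direction ``$\Leftarrow$'' are correct. You also correctly identify that everything hinges on $\lambda'=\lambda$. Your outline (the two types are conjugate, then the conjugating element fixes $\lambda$) has the same shape as the paper's. The gap is in the step meant to settle the crux.

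Type conjugacy gives $g\in G$ with $J^g=J$ and $\lambda^g=\lambda'$, i.e.\ $g$ intertwines $\lambda$ with $\lambda'$. The equality $I_G(\lambda)=E^\times J$ is about elements intertwining $\lambda$ with \emph{itself}. It gives $g\in E^\times J$ only if you already know $\lambda'=\lambda$, so the argument is circular.

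Knowing that $g$ normalizes $J$ does not help either. Here $\mathfrak{A}$ is the Iwahori order of the period-$m$ chain $\{\mathfrak{p}_E^i\}$, so $U(\mathfrak{A})/U^1(\mathfrak{A})\cong(k_F^\times)^m$ is abelian and $U(\mathfrak{A})$ normalizes $J$. Yet any $x\in U(\mathfrak{A})\setminus J$ sends $\psi_\beta$ to $\psi_{x\beta x^{-1}}\neq\psi_\beta$, so $J$ carries distinct $G$-conjugate types. The paper places the conjugating element in $U(\mathfrak{A})$ via [\cite{BKSLII}, Proposition 2.2] and then asserts that all of $U(\mathfrak{A})$ fixes $\Lambda$. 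That step needs the same repair, since $U(\mathfrak{A})\not\subset E^\times J$.

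The missing input is that $\lambda$ and $\lambda'$ already agree on $J^1=U^1(\mathfrak{A})$:
\begin{enumerate}
\item Comparing central characters in $\pi\otimes(\chi\circ\det)\simeq\pi$ gives $\chi^m=1$.
\item Since $E/F$ is tame, $p\nmid m$. Hence $\chi$ is trivial on the pro-$p$ group $1+\mathfrak{p}_F$, which contains $\det U^1(\mathfrak{A})$. So $\lambda'|_{J^1}=\psi_\beta$.
\item Any $g$ intertwining $\lambda$ with $\lambda'$ therefore intertwines the simple character $\psi_\beta$ with itself. So $g\in I_G(\psi_\beta)=J^1E^\times J^1=E^\times J$ [\cite{BK}, \S 3.3].
\item Because $\Lambda$ is a character of $E^\times J$, this gives $\lambda'=\lambda^g=\lambda$. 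Your uniqueness argument then finishes.
\end{enumerate}
With this input you can even skip type conjugacy and the uniqueness in [\cite{BK}, Theorem 6.2.2]. A nonzero map $c\text{-}\Ind\,\Lambda\to c\text{-}\Ind\,\Lambda'$ gives, by Frobenius reciprocity and Mackey, some $g$ intertwining $\Lambda$ with $\Lambda'$. Restricting to $J^1$ puts $g$ in $E^\times J$, and then $\Lambda'=\Lambda^g=\Lambda$.

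Your fallback does not supply this. It never uses $\chi^m=1$, and without it $\chi\circ\det$ need not be trivial on $U^1(\mathfrak{A})$. Its claim that $N_{E/F}(\OO_E^\times)$ fills $\OO_F^\times$ modulo $1+\mathfrak{p}_F$ is also false. Since $N_{E/F}(\zeta)=\zeta^m$ on $\mu_{q-1}$, one only gets $\mu_{q-1}^m(1+\mathfrak{p}_F)$. That is a proper subgroup exactly when $\gcd(m,q-1)>1$, which is the case the paper needs.
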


\bigskip

\begin{proof} First, by definition of compact induction, we have $$\left( c-\Ind_{E^\times J}^G \Lambda \right) \otimes( \chi \circ \det) \simeq c-\Ind_{E^\times J}^G \left( \Lambda \otimes( \chi \circ \det)_{|E^\times J} \right).$$ If $\Lambda \simeq \Lambda \otimes (\chi \circ \det)_{|E^\times J}$, then $\pi \simeq \pi \otimes ( \chi\circ \det)$. Conversely, if $\pi \simeq \pi \otimes ( \chi\circ \det)$, then $$\left( c-\Ind_{E^\times J}^G \Lambda \right)  \simeq c-\Ind_{E^\times J}^G \left( \Lambda \otimes( \chi \circ \det)_{|E^\times J} \right),$$ and by [\cite{BK}, Theorem 6.2.4], the two simple types $(J, \lambda)$ and $(J, \lambda \otimes \chi)$ contained in $\pi$ are conjuguate in $G$. By [\cite{BKSLII}, Proposition 2.2], they are in fact conjuguate in $U(\mathfrak{A})$. Therefore, there exists $x \in U(\mathfrak{A})$ such that  $^x\Lambda \simeq \Lambda \otimes ( \chi \circ \det )_{|E^\times J}$, where~$^x\Lambda(\cdot) =~\Lambda(x^{-1} \cdot x )$. It remains to show that for all $x \in U(\mathfrak{A})$, $^x \Lambda \simeq \Lambda$.  Recall that~$\psi_\beta$~is the character we use to construct~$\Lambda$~and denote by $I_G(\psi_\beta)$ the subgroup of $G$ that intertwines~$\psi_\beta$. Using Proposition 3.3.1 of \cite{BK}, we have that~$J$~is included in~$I_G(\psi_\beta)$ and we saw in the proof of Proposition \ref{type} that $E^\times \subset I_G(\psi_\beta)$. Therefore,~$E^\times J \subset~I_G(\psi_\beta)$. Since we extend $\psi_\beta$ by a character of $E^\times$ to obtain $\Lambda$, we have that for all $x \in E^\times J$, $^x\Lambda \simeq \Lambda$. In particular, this is the case for $x \in U(\mathfrak{A})$. This proves the Lemma.
\end{proof}

\bigskip

\begin{proof}[Proof of Proposition \ref{propselftorsiongroup}] Let $\chi \in \widehat{F^\times}$ be such that $\pi \simeq \pi \otimes \chi\circ N_{E/F}$. Then by the previous Lemma, we have $\Lambda \simeq~\Lambda \otimes~(\chi \circ~\det_{|E^\times J})$. Note that, our constructed $\Lambda$ is in fact a character of~$E^\times J$. Therefore, $\pi \simeq \pi \otimes( \chi\circ\det)$ if and only if, $\chi \circ \det_{|E^\times J}=1$. We need to compute the image of $E^\times J$ by the determinant map on $E^\times J \subset A$. First, by definition of the norm map, we have $\det(E^\times) = N_{E/F}(E^\times)$ where the determinant of an element $a$ of $E^\times$ is seen as the determinant of the homomorphism $\mu_a$ given in Remark \ref{embedding}. Secondly, since $E$ is tame, $\det( U^1(\mathfrak{A})) = \det (1+ \mathfrak{P}) \subset 1 + \mathfrak{p}_F$. Hence, $$\det (E^\times J) = \det (E^\times U^1(\mathfrak{A})) \subset N_{E/F}(E^\times) (1 + \mathfrak{p}_F) \subset N_{E/F}(E^\times).$$ Conversely, we have $N_{E/F}(E^\times) \subset \det (E^\times J)$. Therefore, $\pi \in X(\pi)$ if and only if, $\chi \circ N_{E/F}= 1$ as claimed.
\end{proof}

\bigskip

The previous proposition suggests that we need to control the norm map $N_{E/F} : E \longrightarrow F$~and therefore the extension $E$ in order to control $X(\pi)$. \\

We now fix explicit extensions $E_1$ and $E_2$. First, set $E_1 = F[\varpi_{E_1}]$ where $\varpi_{E_1}$ satisfies the equation $X^m + \varpi_F=0$. Denote by $\pi_{m+1}$ a representation obtained by the previous construction applied to $E_1$ and $\beta_1 = \varpi_E^{-1}$. We shall compute the image of $E_1^\times$ by $N_{E_1/F}$ by using several properties of the field $E_1$ and its norm map $N_{E_1/F}$. Let $\zeta$ be a generator of the group $\mu_{q-1}$ of $(q-1)$-roots of unity. Since $E_1^\times \cong \left< \varpi_{E_1} \right> \times \mu_{q-1} \times \left( 1 + \mathfrak{p}_{E_1} \right)$, we need to compute $N_{E_1/F}(\varpi_E)$, $N_{E_1/F}(\zeta)$ and $N_{E_1/F}(1 + \mathfrak{p}_E)$. We have: \begin{itemize} \item $N_{E_1/F}(\varpi_E) = \det_A(\varpi_{E_1}) = (-1)^{m+1} \varpi_F$, \item since $k_{E_1} = k_F$, it follows that $\mu_{q-1} \subset F^\times$ and hence, $N_{E_1/F}(\zeta)= \zeta^m$, \item since $p \not |  \;m$, $E_1/F$ is tamely ramified and therefore, $N_{E_1/F}(1 + \mathfrak{p}_E)= 1 + \mathfrak{p}_F$. \end{itemize} Hence,  $$N_{E_1/F}(E_1^\times) = \left< \varpi_F \right> \times  \mu_{q-1}^m \times (1+ \mathfrak{p}_F).$$

Assume that  $m \; | \; q-1$ and choose $\eta \in \wF$ such that $$\eta(\varpi_F) =1, \qquad  \eta(1 + \mathfrak{p}_F)=1  \quad\mathrm{and} \quad \eta(\zeta) = e^{2i \pi  / m}.$$ Then $\eta \in X(\pi_{m+1})$ and since $\ker ( \eta) = N_{E_1/F}(E_1^\times)$, we have $X(\pi_{m+1}) = \left< \eta \right>$.

\bigskip

\begin{rema} If $m$ and $q-1$ are coprime, then $\mu_{q-1}^m = \mu_{q-1}$ and $N_{E_1/ F}(E_1^\times) = F^\times$. If $\gcd(q-1, m) = d >1$,  $\mu_{q-1}^m = \mu_{q-1/d}$. There exists a character of order $m$ of $\mu_{q-1/d}$, if and only if, there exists $j \in [\![ 1, \frac{q-1}{d} ]\!]$ such that $\frac{q-1}{d} \; | \; jm$.\end{rema}

\bigskip

Now, set  $E_2 = F[\varpi_{E_2}]$ such that $\varpi_{E_2}^m + \zeta \varpi_F=0$ with $\zeta$ set above. We have $$N_{E_2/F}(E_2^\times) = \left< \zeta \varpi_F \right> \times  \mu_{q-1}^m \times (1+ \mathfrak{p}_F).$$ Let $\chi$ be the unramified character of $F^\times$ such that $\chi(\varpi_F)= e^{-2i\pi /m}$. We have $\ker (\chi \eta) = N_{E_2/F}(E_2^\times)$ because $(\chi \eta)(\zeta \varpi_F) = \chi(\varpi_F) \eta(\zeta) = 1$. Then $\chi\eta \in X(\pi_{m+2})$ and $X(\pi_{m+2}) = \left< \chi\eta \right>$. This completes the proof of Proposition \ref{main}, modulo the choice of $\pi_1$.

\bigskip

\begin{rema}Here, both of our representations are constructed from a totally ramified extension. Then by [\cite{BK}, Lemma 6.2.5] the only unramified character in $X(\pi_{m+1})$ and $X(\pi_{m+2})$ is the trivial one. \end{rema}

\bigskip

\subsection{Discussion on $\pi_1$}

There are several possibilities for the choice of $\pi_1$ in Proposition \ref{main}. The first one is to take a generalized Steinberg representation (\cite{steinberg}) $St \otimes \alpha =~\left( \Ind_B^G \;1 / \sum_{P \supsetneq B}^G \;1 \right) \otimes \alpha$ with $\alpha \in \wF$. In this case, $X(\pi_1)$ is trivial, but this is not a supercuspidal representation and then neither is $\pi$, given in \eqref{pii}. Another possibility for $\pi_1$ is using the same construction described before to obtain a supercuspidal representation of $\GL(k)$. For instance, set $E_3 = F[\varpi_{E_3}]$ such that $\varpi_{E_3}$ verifies the equation $X^k + \varpi_F =0$. We have $$N_{E_3/F}(E_3^\times) = \left< \varpi_F \right> \times  \mu_{q-1}^k \times (1+ \mathfrak{p}_F).$$  We just need to check that $\eta$ defined above is not in $X(\pi_1)$ i.e. that $N_{E_3/F}(E_3^\times) \not\subset \ker(\eta)$. If $\gcd (k, q-1) = 1$, then $\mu_{q-1}^k = \mu_{q-1}$ and every character in $X(\pi_1)$ is trivial. Otherwise, if $\gcd (k, q-1 )=d >1$, we have $\mu_{q-1}^k \subset \mu_{q-1}^m$ if and only if $\gcd(q-1,m) \; | \; \gcd(q-1,k)$. In this case, $\left< \eta \right> \subset X(\pi_1)$. Then, at the beginning of section \ref{2}, we shall choose $m$ and $k$ (and eventually $F$) such that $\gcd(q-1,m) \; | \; \gcd(q-1,k)$ if possible; otherwise we can choose a Steinberg representation for $\pi_1$ but $\pi$ in \eqref{pii} will not be supercuspidal.

\section{Prime and small dimensions.}\label{4}
\subsection{Prime dimension}

Let $n$ be an odd prime number. Let $n_1, \dots, n_r \in \N^*$ be a partition of $n$ such that $n= n_1 + \dots + n_r$. Consider the Levi subgroup $M$ of $G$ associated to this partition which is isomorphic to $\GL(n_1, F) \times \dots \times \GL(n_r, F)$. Our first result is given when there is at least one block of size one in the partition. In this case, for any $\sigma \in \dM$, we will not need to lift $\sigma$ from $M$ to $\tM$ using Proposition \ref{pi} and there will always be a fixed point in the orbit of $\sigma$.

\bigskip

\begin{prop}\label{bloc1} Let $n \geq 2$ be any integer. Let $M$ be a Levi subgroup of $G=\SL(n,F)$ such that $M$ is conjugate by an element of $\tG= \GL(n,F)$ to $$\left[\begin{matrix} M' & \\ & F^\times\end{matrix} \right]_{\det=1}$$ with $M'$ a Levi subgroup of $\GL(n-1,F)$, that is to say, $n_r=1$. Let $\sigma \in \dM$. Then there exists $\sigma' \in \OO$ the orbit of $\sigma$ such that $\sigma'$ is fixed by the action of $W_\OO$. 
\end{prop}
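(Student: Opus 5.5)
The key observation is that when $n_r=1$, the projection $M \to M'$, $\diag(g_1,\dots,g_{r-1},x)\mapsto \diag(g_1,\dots,g_{r-1})$, is an isomorphism, because $x = \det(g_1\cdots g_{r-1})^{-1}$ is determined by the other blocks. So $M \cong M' \cong \GL(n_1,F)\times\dots\times\GL(n_{r-1},F)$. Through this isomorphism, $\sigma$ becomes a discrete series $\sigma' = \tau_1\otimes\dots\otimes\tau_{r-1}$ of a product of general linear groups, with no restriction issues; this is why the plan avoids lifting via Proposition \ref{pi}. I will then transport the Weyl group action and the unramified characters to this model. By Proposition \ref{character}, a unitary unramified character of $M$ is $\prod_{i=1}^r z_i^{\val(\det g_i)}$. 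Since $\det g_r = \prod_{i<r}\det(g_i)^{-1}$, this equals $\prod_{i<r}(z_iz_r^{-1})^{\val(\det g_i)}$. Hence $\XMnru$ identifies with all of $\chinru(M')\cong(\S^1)^{r-1}$, acting block by block.

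Next I will describe the Weyl group action. Let $w\in W(M)$, viewed as a permutation of blocks of equal size. Blocks of size $1$ may be permuted with block $r$, so I first treat the case where $w$ fixes $r$. Then $w$ acts on $\sigma'$ by permuting the factors $\tau_i$ among blocks of equal size. In this case $w\in W_\OO$ means that $\tau_{w^{-1}(i)}\simeq \tau_i\otimes\chi_i$ for unramified $\chi_i$, i.e. $\tau_i$ and $\tau_{w(i)}$ lie in the same $\chinru(\GL(n_i))$-orbit. This is exactly the $\GL$ situation. I will group the indices $i<r$ into classes where the $\tau_i$ are unramified twists of one another, choose one representative $\tau$ per class, and replace every member of the class by $\tau$ via a suitable $\chi_i$. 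The resulting $\sigma''\in\OO$ has equal factors on each class, so it is fixed by every $w\in W_\OO$ fixing $r$.

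The main obstacle is the elements of $W_\OO$ that move block $r$, which exist when some other $n_i=1$. For these I plan to avoid the projection picture and argue directly. For blocks of size one, the relevant data are characters $x\mapsto \alpha_i(x)$ of $F^\times$ with $\alpha_r$ eliminated. I will rewrite a representation of $M$ as $\tau_1\otimes\dots\otimes\tau_{r-1}\otimes\alpha_r$, modulo simultaneous twisting by $\eta\circ\det$, which is the same as allowing an arbitrary choice of $\alpha_r$. In this picture, moving $r$ and fixing $r$ behave symmetrically: by Proposition \ref{WTheta}-style reasoning, $w\in W_\OO$ iff ${}^w\pi\simeq\pi\otimes\widetilde\chi\,\eta$ for some $\pi$ lifting $\sigma$.

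To handle the extra twist $\eta$, I will choose the lift $\pi$ with last factor $\alpha_r$ trivial. A permutation then either fixes $r$, forcing $\eta$ unramified so that it can be absorbed into $\widetilde\chi$, or swaps $r$ with some size-one block $i$ whose character $\alpha_i$ must then be unramified. Normalizing all unramified size-one characters in the class of $r$ to be trivial, via $\lambda\in\XMnru$, makes these blocks equal to block $r$. This should make $\sigma''$ fixed by all of $W_\OO$. Checking that $\eta$ is always forced to be unramified here, using the trivial last block, is the step I expect to need the most care.
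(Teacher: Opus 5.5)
\textbf{There is a genuine gap, at the step you flagged, and it cannot be repaired.}

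Your treatment of the elements fixing $r$ is correct. It is essentially the paper's whole proof: transport $\sigma$ to $M'$ with trivial last factor, group the blocks into unramified-twist classes, and normalize as in Afgoustidis--Aubert.

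The gap is the dichotomy in your last paragraph. Suppose $w\in W_\OO$ and $w^{-1}(r)=s\neq r$. Reading off block $r$ in ${}^w\pi\simeq\pi\otimes\widetilde\chi\,\eta$ gives only $\alpha_s=\chi_r\eta$. So $\eta$ and $\alpha_s$ have the same ramified part, and nothing forces either of them to be unramified. Already for $\SL(2,F)$ this fails. Take $\xi=\epsilon\chi$ with $\epsilon$ ramified quadratic and $\chi$ unitary unramified with $\chi^2\neq 1$. Then:
\begin{itemize}
\item $W_\OO=\Z/2\Z$, and the attached twist is $\epsilon$ times an unramified character, hence ramified;
\item your normalization changes nothing, and $\sigma$ is not fixed, since ${}^w\xi=\epsilon\chi^{-1}\neq\xi$;
\item the fixed point $\epsilon=\xi\chi^{-1}$ requires a twist that your procedure never makes.
\end{itemize}
The paper's proof does not supply this step either. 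It simply declares the unramified-twist classes $\Omega_{i_0}$ to be the $W_\OO$-orbits, which is the same unproved assumption.

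\textbf{The statement is false once there are at least two blocks of size one.} Take $q$ odd. Let $\eta$ be the ramified quadratic character with $\eta(\varpi_F)=1$, and $\chi$ the unramified quadratic character. Choose $\pi_3,\pi_4\in\mathcal{E}^2(\GL(2,F))$ with $X(\pi_3)=\{1,\eta\}$ and $X(\pi_4)=\{1,\chi\eta\}$, as given by Proposition~\ref{main} with $m=2$.

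In $\SL(6,F)$ let $\tM=\GL(1)^2\times\GL(2)^2$ and $\pi=1\otimes\eta\otimes\pi_3\otimes\pi_4$. Since $\tM$ is the direct product of $M$ with a central $\GL(1)$-factor, $\sigma=\pi_{|M}$ is irreducible.

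\emph{$w=(1\;2)$ lies in $W_\OO$.} Indeed ${}^w\pi\simeq\pi\otimes(1,1,1,\chi)\otimes\eta$.

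\emph{No point of the orbit is fixed.} Suppose $\pi\otimes\lambda$ were fixed, with $\lambda\in\XtMnru$. Then ${}^w(\pi\otimes\lambda)\simeq\pi\otimes\lambda\mu$ for some $\mu\in\wF$.
\begin{itemize}
\item The two $\GL(2)$ blocks force $\mu\in X(\pi_3)\cap X(\pi_4)=\{1\}$.
\item The first block then gives $\eta\lambda_2=\lambda_1$, so $\eta$ would be unramified, a contradiction.
\end{itemize}
This is just the construction of Section~\ref{2} with $k=1$, $m=2$, and its proof never uses $k\geq 2$.

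\textbf{What your argument does prove.} Your argument, like the paper's, establishes the case where $r$ is the only block of size one. There every $w\in W(M)$ fixes $r$, the twist $\eta$ is forced to be unramified, and your first step produces the fixed point.
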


\begin{proof} First, the map $$\begin{array}{ccc} M' &\overset{\phi}{\longrightarrow} & M \\ g &\longmapsto& \begin{pmatrix} g & \\ & \det g^{-1} \end{pmatrix} \end{array}$$ gives an isomorphism between $M$ and $M'$. Let $\sigma \in \dM$ and suppose $M'$ is associated to the partition $n-1 = n_1 + \dots + n_{r-1}$, that is to say, 
\begingroup\setlength{\arraycolsep}{1,5pt}\renewcommand{\arraystretch}{1} $$M' \cong \left[ \begin{matrix} \GL(n_1) &&&\\ & \GL(n_2) &&\\&& \ddots& \\ &&& GL(n_{r-1}) \end{matrix} \right].$$ \endgroup Then there exists $\dot\sigma \in \mathcal{E}^2(M')$ such that, $\sigma \simeq \dot\sigma \circ \phi^{-1}$ and $\dot\sigma$ is equivalent to $\sigma_1 \otimes \dots \otimes \sigma_{r-1}$ with $\sigma_i \in \mathcal{E}^2(\GL(n_i))$. Hence $\sigma \simeq \sigma_1 \otimes \dots \otimes \sigma_{r-1} \otimes \sigma_{r}$ with $\sigma_{r}$ the trivial character of $F^\times$.

Now, we use an argument from \cite{AAAMA} to construct a fixed point. Let $i_0 \in  [\![1 , r]\!]$ and let $$\Omega_{i_0} = \left\{ j \in [\![1 , r+1]\!] \; | \; \exists \chi_j \in \mathcal{X}_{nr}^u(F^\times) \st  \sigma_j \simeq \sigma_{i_0} \chi_j\right\}$$ be the orbit of $i_0$ in $W_\OO$. Set $$\chi_{\Omega_{i_0}} = \bigotimes_{j \in \Omega_{i_0}} \chi_j^{-1},$$ and define $\sigma_{|\Omega_{i_0}} \simeq \bigotimes_{j \in \Omega_{i_0}} \sigma_j$ then we have $$\sigma_{|\Omega_{i_0}} \otimes \chi_{\Omega_{i_0}} \simeq \bigotimes_{j \in \Omega_{i_0}} \sigma_{i_0}.$$

For every orbit $\Omega$ of $W_\OO$, we choose a representative $i_\Omega$ of the orbit and define $\chi_\Omega$ the same way. Now define $$  \lambda = \bigotimes_\Omega  \chi_\Omega \qquad \mathrm{and \; set} \qquad \sigma' = \sigma \otimes\lambda.$$ Then $\sigma'$ is a fixed point under the action of $W_\OO$. Indeed, let $w \in W_\OO$. For all orbits $\Omega$, we have $$\sigma'_{|\Omega} = \bigotimes_{j \in \Omega} \sigma_j \otimes \chi_j^{-1} = \bigotimes_{j \in \Omega} \sigma_\Omega$$ with $\sigma_\Omega$ a chosen reprensentative of the orbit $\Omega$. Then, $$^w\sigma'_{|\Omega} =\;  ^w\!\left( \bigotimes_{j \in \Omega} \sigma_\Omega\right) = \bigotimes_{j \in \Omega} \sigma_\Omega = \sigma'_{|\Omega}.$$ Then $\sigma'$ is fixed by all elements of $W_\OO$.
\end{proof}

\bigskip 

The following proposition shows that when $n$ is a prime number, we can simplify the description of the group $W_\OO$ to remove all complications related to the presence of ramified characters.

\bigskip

\begin{prop}\label{prime} Let $n \in \N^*$ be an odd prime number. Let $n = a_1n_1 + \dots + a_rn_r$ where the $a_i$, $n_i$ are integers greater than 1. Let $M$ be the Levi corresponding to the partition with $a_i$ blocks of size $n_i$ for each $i$. Let $\sigma \in \dM$.  By Proposition \ref{pi}, there exists $\pi_\sigma \in \dtM$ such that $\sigma \subset \pi_{\sigma|M}$. Let $w \in W_\OO$ such that there exists $\eta \in \wF$ totally ramified and $\chi \in \XtMnru$ such that $^w\pi_\sigma \simeq \pi_\sigma \chi\eta$. Then $^w\pi_\sigma \simeq \pi_\sigma \chi$; that is, $\eta$ can be taken to be trivial.

\end{prop}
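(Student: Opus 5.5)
The plan is to show that $\eta$ is forced to be trivial by comparing two constraints on its order: one from $n$ being prime, one from a single cycle of $w$. First dispose of the degenerate case. If the partition has a single block ($r=1$, $n_1=n$), then $W(M)$ is trivial, so $w=1$ and ${}^w\pi_\sigma = \pi_\sigma \simeq \pi_\sigma\otimes 1$. The conclusion holds with trivial $\eta$ and $\chi$. So assume there are at least two blocks; every block then has size $s$ with $2\le s<n$.

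\textbf{Step 1: $\eta^n=1$.} Write $\pi_\sigma\simeq\pi_1\otimes\dots\otimes\pi_r$ with $\pi_i\in\mathcal{E}^2(\GL(n_i,F))$, and compare central characters in ${}^w\pi_\sigma\simeq\pi_\sigma\chi\eta$. The scalar matrices of $\tG$ lie in $\tM$ and are fixed by $w$, so this gives $\chi^n\eta^n=1$ on $F^\times$, where we write $\chi$ restricted to the scalars. This is the remark following Proposition \ref{WTheta}. Evaluating on units kills the unramified $\chi$, so $\eta^n$ is trivial on $\OO_F^\times$. Since $\eta(\varpi_F)=1$, we get $\eta^n=1$.

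\textbf{Step 2: the cycle relation.} Pick a block index $i_1$, say of size $s$, and let $(i_1\,i_2\,\dots\,i_l)$ be the cycle of $w$ containing it ($l=1$ is allowed). All blocks in this cycle have size $s$, so $ls\le n$. Since blocks outside the cycle exist or $l\ge2$ with $s\ge 2$, we have in fact $ls<n$: indeed $ls=n$ with $n$ prime and $s\ge2$ would force $s=n$, which was excluded. Componentwise, ${}^w\pi_\sigma\simeq\pi_\sigma\chi\eta$ gives relations $\pi_{i_{j+1}}\simeq\pi_{i_j}\chi_{i_j}\eta$ along the cycle, with the indexing of $\chi$ fixed by the convention for the $w$-action. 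Composing around the cycle yields $\pi_{i_1}\simeq\pi_{i_1}\chi'\eta^l$, where $\chi'$ is a product of unramified characters. Thus $\chi'\eta^l\in X(\pi_{i_1})$.

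\textbf{Step 3: central character of one block.} If $\mu\in X(\rho)$ for $\rho\in\mathcal{E}^2(\GL(s,F))$, then comparing central characters gives $\mu^s=1$. Applied here, $(\chi')^s\eta^{ls}=1$. Restricting to $\OO_F^\times$ and using $\eta(\varpi_F)=1$ as in Step 1 gives $\eta^{ls}=1$. Since $1\le ls<n$ and $n$ is prime, $\gcd(ls,n)=1$. Combining this with $\eta^n=1$ from Step 1 gives $\eta=1$, hence ${}^w\pi_\sigma\simeq\pi_\sigma\chi$.

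The main point needing care is the bookkeeping in Step 2: the indexing of the $w$-action has to produce exactly $\eta^l$ and only unramified factors after going around a cycle. One must also confirm that $ls<n$ always holds once $r\ge 2$; this is where the hypothesis $n_i>1$ and the primality of $n$ are used. Note that Step 3 needs only one cycle, not all of them.
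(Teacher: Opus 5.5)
Your proof is correct, but it uses primality differently from the paper.

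The paper never uses the central character of the whole of $\pi_\sigma$. It applies Lemma \ref{ordre} to \emph{every} cycle $c_{i,k}$ of $w$ to get $\eta^{l_{i,k}n_i}=1$. It then multiplies over the cycles on blocks of a given size to obtain $\eta^{a_in_i}=1$ for each $i$, and concludes by Bézout from $\gcd(a_1n_1,\dots,a_rn_r)=1$.

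You instead pair the global relation $\eta^n=1$ (the remark after Proposition \ref{WTheta}) with the relation $\eta^{ls}=1$ coming from a single cycle. Primality then enters only through $\gcd(ls,n)=1$ for $1\le ls<n$.

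Your version is shorter and needs only one cycle. It also makes explicit where the hypothesis $n_i\ge 2$ and the exclusion of the one-block Levi are used, which the paper's bare assertion that $\gcd(a_1n_1,\dots,a_rn_r)=1$ leaves implicit. With blocks of size $1$, an $n$-cycle on the torus gives $ls=n$, and the conclusion genuinely fails there.

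In exchange, the paper's route produces a criterion that depends only on the Levi subgroup, not on $w$. Whenever $\gcd(a_1n_1,\dots,a_rn_r)=1$, the totally ramified twist can be discarded for every $w\in W_\OO$, whether or not $n$ is prime. Your Step 3, run over all cycles rather than one, would recover (indeed refine) that bound.
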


\bigskip

\begin{rema} In the previous partition, we can assume that $n_i \geq 2$ for all $1 \leq i \leq r$; otherwise, the question of fixed points is adressed by Proposition \ref{bloc1}. We may also assume that there exists $1 \leq i \leq r$ such that $a_i \geq2$; otherwise $W_\OO = \{1 \}$.  \end{rema}

\bigskip

Before proving Proposition \ref{prime}, we need the following lemma:

\bigskip

\begin{lemm}\label{ordre} Let $m$ be any integer greater than $1$. Let $\pi_1 \in \mathcal{E}^2(\GL(m,F))$. Let $\mu \in \wF$ be a totally ramified character such that there exists $\lambda \in \mathcal{X}_{nr}^u(F^\times)$ such that $\pi_1 \simeq \pi_1 \lambda \mu$. Then, $\mu^m=1$.
\end{lemm}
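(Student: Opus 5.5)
The natural tool is the central character. I will compare central characters on both sides of the isomorphism $\pi_1 \simeq \pi_1 \lambda\mu$ and then use that $\mu$ is trivial on $\varpi_F$ while $\lambda$ is trivial on $\OO_F^\times$.

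Let $\omega_{\pi_1}$ denote the central character of $\pi_1$; since $\pi_1$ is irreducible, Schur's lemma guarantees it exists. For $z \in F^\times$, identified with the scalar matrix $zI_m$ in the centre of $\GL(m,F)$, we have $\det(zI_m) = z^m$. Hence the central character of $\pi_1 \otimes (\lambda\mu)\circ\det$ is $\omega_{\pi_1}\cdot(\lambda\mu)^m$. Isomorphic irreducible representations have the same central character, so the hypothesis $\pi_1 \simeq \pi_1\lambda\mu$ gives $(\lambda\mu)^m = 1$ on $F^\times$, that is, $\lambda^m = \mu^{-m}$.

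We then separate ramified and unramified parts, using the decomposition $F^\times = \langle \varpi_F\rangle \times \OO_F^\times$ recalled in Appendix \ref{appendix}.
\begin{itemize}
\item On $\OO_F^\times$: $\lambda$ is unramified, so $\lambda^m$ is trivial there, and therefore $\mu^m$ is trivial on $\OO_F^\times$.
\item On $\varpi_F$: $\mu$ is totally ramified, so $\mu^m(\varpi_F) = \mu(\varpi_F)^m = 1$.
\end{itemize}
Since $\mu^m$ is trivial on both factors, $\mu^m = 1$ on all of $F^\times$.

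The same argument also yields $\lambda^m = 1$, though the statement does not need this. I expect no real obstacle. The only point needing care is that "totally ramified" is used in the sense fixed in the paper, namely $\eta(\varpi_F)=1$ for the chosen uniformizer, which is exactly what the second bullet uses. Unitarity of the characters is not needed.
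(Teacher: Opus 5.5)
Your proof is correct and follows the paper's argument: both pass to central characters to get $\lambda^m\mu^m=1$, then use that $\mu^m=\lambda^{-m}$ is simultaneously unramified and trivial on $\varpi_F$, hence trivial. Your splitting over $F^\times=\langle\varpi_F\rangle\times\OO_F^\times$ simply spells out the step the paper states in one line ("impossible because $\mu$ is totally ramified").
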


\bigskip

\begin{proof}
Let $\mu \in \wF$ be as in the lemma. Passing to the the central character, we have $$\lambda^m\mu^m = 1.$$ Then $\mu^m = \lambda^{-m}$ or $\mu^m = \lambda^m =1$. The equality $\mu^m = \lambda^{-m}$ with nontrivial $\mu^m$ and $\lambda^m$ is impossible because $\mu$ is totally ramified. Therefore,  $\mu^m = \lambda^m =1$.
\end{proof}

\bigskip

\begin{rema} If $\mu$ is ramified but not totally ramified, some issues may arise. For instance, take $\lambda = e^{i\frac{\pi}{4}val(\cdot)}$ and $\mu = e^{i\frac{\pi}{4}\val(\cdot)}\eta$ with $\eta$ the Legendre symbol. Then $\left(\lambda \mu\right)^2 = 1$ but $\mu^2 \neq 1$. \end{rema}

\bigskip

\begin{proof}[Proof of Proposition \ref{prime}] Let $n = a_1n_1 + \dots + a_rn_r$ be a partition of $n$. Since $n$ is a prime number, $\gcd(a_1n_1, \dots, a_rn_r) = 1$ This information shall be useful at the end of the proof. Let $\tM$ be Levi subgroup of $\GL(n,F)$ associated to this partition. Let $\sigma \in \dM$. The Weyl group of $\tM$ is isomorphic to $$\mathfrak{S}_1 \times \dots \times \mathfrak{S}_r.$$

Let $w \in W_\OO$, by Proposition \ref{WTheta}, there exists $\eta \in \wF$ totally ramified and $\chi \in \XtMnru$ such that $^w\pi_\sigma \simeq \pi_\sigma \chi\eta$. We can write $w$ as a product of disjoint cycles: $$w = \prod_{i=1}^r \prod_{k=1}^{k_i} c_{i,k}$$ with $c_{i,k}$ the $k$-th cycle that acts on blocks of size $n_i$. Denote by $l_{i,k}$ the length of the cycle $c_{i,k}$, with possibly $l_{i,k} = 1$ for fixed points of $w$. Fix $1 \leq i \leq r$ and let $1 \leq k \leq k_i$. Then $$w_{|c_{i,k}}^{l_{i,k}} =  c_{i,k}^{l_{i,k}} = \mathrm{id}.$$ Since $$^{w^{l_{i,k}}}\pi_\sigma \simeq \pi_\sigma \otimes \left(\bigotimes_{m=0}^{l_{i,k}-1} \; ^{w^m} \!\chi \right) \otimes \eta^{l_{i,k}},$$  we see that for all $j \in \supp(c_{i,k})$, we have \begin{equation}\label{pij}\pi_j \simeq \pi_j \otimes  \left(\bigotimes_{m=0}^{l_{i,k}-1} \; \chi_{w^m(j)} \right) \otimes \eta^{l_{i,k}}.\end{equation} Passing to central characters in \eqref{pij}, we find that $$\left(\left(\bigotimes_{m=0}^{l_{i,k}-1} \; \chi_{w^m(j)} \right) \otimes \eta^{l_{i,k}} \right)^{n_i} = 1.$$ Since $\eta$ is totally ramified by Lemma \ref{ordre}, we have $$\eta^{l_{i,k}n_i} = 1 \qquad \mathrm{for \; all \; }1 \leq k \leq k_i .$$ Since $a_i= \sum_{k=1}^{k_i} l_{i,k}$, we deduce that $$\eta^{a_in_i} = \eta^{l_{i,1}n_i} \dots  \eta^{l_{i,k_i}n_i} =1.$$ The final step uses a Bézout's identity: since $\gcd(a_1n_1, \dots, a_rn_r) = 1$, there exist $u_1, \dots u_{r}\in \Z$ such that $$1 = u_1a_1n_1+ \dots + u_ra_rn_r.$$ Therefore, $$\eta = (\eta^{a_1n_1})^{u_1}\dots ( \eta^{a_rn_r})^{u_r}=1.$$ 
\end{proof}

\begin{theo}  Let $n \in \N^*$ be an odd prime number. Let $n = a_1n_1 + \dots + a_rn_r$ where $n_i$ is the size of a block and $a_i$ the number of blocks of size $n_i$. Denote by $M$ the Levi subgroup associated to this partition and let $\sigma \in \dM$. Then there exists $\sigma' \in \OO$, the orbit of $\sigma$, such that $\sigma'$ is fixed by the action of $W_\OO$. \end{theo}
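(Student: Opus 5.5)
The plan is to reduce, via Proposition \ref{prime}, to the situation where every $w\in W_\OO$ acts on $\pi_\sigma$ only through unramified twists. Then the orbit-by-orbit construction from the proof of Proposition \ref{bloc1} (the argument of \cite{AAAMA}) applies at the level of $\tM$.

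First I handle the degenerate cases. If some block has size $1$, Proposition \ref{bloc1} already gives the fixed point. If all blocks have distinct sizes, then $W_\OO=\{1\}$ and there is nothing to prove. So assume all $n_i\geq 2$, and lift $\sigma$ to $\pi_\sigma=\pi_1\otimes\dots\otimes\pi_s\in\dtM$ using Proposition \ref{pi}.

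By Proposition \ref{WTheta} and the remark following it, each $w\in W_\OO$ satisfies $^w\pi_\sigma\simeq\pi_\sigma\chi\eta$ with $\eta$ totally ramified. Proposition \ref{prime} then allows us to take $\eta=1$. Hence
$$W_\OO=\{w\in W(M)\;|\;\exists\,\widetilde\chi\in\XtMnru,\ {}^w\pi_\sigma\simeq\pi_\sigma\widetilde\chi\}.$$
In particular, for each $w\in W_\OO$ and each block $j$ we have $\pi_{w^{-1}(j)}\simeq\pi_j\chi_j$ with $\chi_j$ unramified unitary.

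Next I construct the fixed point. Consider the orbits $\Omega$ of $W_\OO$ acting on the blocks. For each orbit, fix a representative $i_\Omega$. For every $j\in\Omega$, there is an unramified unitary $\chi_j$ with $\pi_j\simeq\pi_{i_\Omega}\chi_j$: take any $w\in W_\OO$ sending $j$ to $i_\Omega$ and read off the corresponding component. Set $\lambda=\bigotimes_\Omega\bigotimes_{j\in\Omega}\chi_j^{-1}\in\XtMnru$. Then $\pi_\sigma\lambda$ has all components in a given orbit equal to $\pi_{i_\Omega}$, so $^w(\pi_\sigma\lambda)\simeq\pi_\sigma\lambda$ for every $w\in W_\OO$.

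It remains to descend this to $M$. By Proposition \ref{tadic}, $\sigma\otimes\lambda_{|M}$ lies in $\OO_{\tM}(\pi_\sigma\lambda)$. I need $^w(\sigma\otimes\lambda_{|M})\simeq\sigma\otimes\lambda_{|M}$, not merely that the conjugate lies in the same $L$-packet. By Goldberg's description of $W_\sigma$, it suffices to have $^w(\pi_\sigma\lambda)\simeq\pi_\sigma\lambda\otimes(\mu\circ\det)$ for some $\mu$, which holds with $\mu=1$. That gives $W_\OO\subset W_{\sigma\otimes\lambda}$, so $\sigma'=\sigma\otimes\lambda_{|M}$ is the required fixed point.

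The main obstacle is exactly this last descent step. Goldberg's lemma as quoted gives $W_\sigma$ in terms of $\pi_\sigma$ up to a twist. I must check that it is applied with $\pi_{\sigma'}=\pi_\sigma\lambda$, which is legitimate because $\sigma'\subset(\pi_\sigma\lambda)_{|M}$. I also want to confirm that it does not tacitly require multiplicity one or some condition on $X(\pi_\sigma)$. If it does, I would fall back on the length and multiplicity control of Propositions \ref{cyclic} and \ref{length}. Since $n$ is prime and $X(\pi_\sigma)$ has exponent dividing every block size, and hence dividing $n$, this should force $X(\pi_\sigma)$ to be small enough for that control to apply.
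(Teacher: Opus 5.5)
Your proposal is correct and follows essentially the paper's route: Proposition \ref{prime} removes the ramified twist, the orbit-by-orbit unramified normalization of \cite{AAAMA} is done at the level of $\tM$, and the result is descended to $M$. The descent step you worried about is settled even more simply, since for prime $n$ with at least two blocks $\gcd(n_1,\dots,n_r)=1$ forces $X(\pi_\sigma)=\{1\}$; hence $\pi_{\sigma|M}=\sigma$ is irreducible by Propositions \ref{cyclic} and \ref{length}, and $\sigma'=(\pi_\sigma\lambda)_{|M}$ is fixed outright. This silently justifies the paper's one-line descent, and your appeal to Goldberg's lemma for $\pi_\sigma\lambda$ works equally well.
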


\bigskip 

\begin{proof} By Proposition \ref{pi}, let $\pi_\sigma \in \dtM$ be such that $\sigma = \pi_{\sigma_{|W}}$ with $W$ a subrepresentation of $\pi_\sigma$. The previous proposition shows that if $n$ is a prime number, then we have the following simpler description of $W_\OO$:  $$W_\OO = \{w \in W(M), \exists \chi \in \XtMnru \st ^w\pi_\sigma \simeq \pi_\sigma \otimes \chi \}.$$ The rest of the proof is now very similar to the argument used in Proposition \ref{bloc1}. Let~$i_0\in~\{1, \dots, n \}$ and define $\Omega_{i_0}$, the orbit of $i_0$ under $W_\OO$, $$\Omega_{i_0} = \left\{ j \in [\![1 , n]\!] \; | \; \exists \chi_j \in \mathcal{X}_{nr}^u(F^\times) \st  \pi_j \simeq \pi_{i_0} \chi_j\right\}.$$ Set $$\chi_{\Omega_{i_0}} = \bigotimes_{j \in \Omega_{i_0}} \chi_j^{-1}, \qquad \lambda = \bigotimes_\Omega  \chi_\Omega \qquad \mathrm{and} \qquad \pi'_{\sigma} =  \pi_{\sigma} \otimes \lambda.$$ Then $ \pi'_{\sigma}$ is fixed under the action of $W_\OO$. Indeed, let $w \in W_\OO$. For all orbits $\Omega$, we have $$\left(\pi'_{\sigma}\right)_{|\Omega} = \bigotimes_{j \in \Omega} \pi_j \otimes \chi_j^{-1} = \bigotimes_{j \in \Omega} \pi_\Omega$$ with $\pi_\Omega$ a chosen reprensentative of the orbit $\Omega$. Hence $$\left(^w\pi'_{\sigma}\right)_{|\Omega} =\;  ^w\!\left( \bigotimes_{j \in \Omega} \pi_\Omega\right) = \bigotimes_{j \in \Omega} \pi_\Omega = \left(\pi'_{\sigma}\right)_{|\Omega}.$$ Therefore $\pi'_{\sigma}$ is fixed $w$. Now, define $\sigma' = \pi_{\sigma{|W}} \otimes \lambda_{|M}$. Then $\sigma' \in \OO$, since $\sigma' = \sigma \otimes \lambda_{|M}$. Moreover, $\sigma'$ is a subrepresentation of $\pi'_{\sigma|M} = \pi_{\sigma|M} \otimes \lambda$. Let $w \in W_\OO$. Since $^w\pi'_{\sigma} \simeq \pi'_{\sigma}$, we have $$^w\sigma' \simeq \; ^w\pi'_{\sigma|W}\simeq \pi'_{\sigma|W} \simeq \sigma'.$$ Therefore $\sigma'$ is a fixed point under the action of $W_\OO$.
\end{proof}

\begin{rema}\label{coro} If $n$ is not assumed to be prime but if $M$ is any Levi of $G= \SL(n,F)$ and $\sigma \in \dM$ such that $W_\OO = \{ w \in W(M), \exists \chi \in \XtMnru \st ^w\pi_\sigma \simeq \pi_\sigma \otimes \chi \}$, then the proof of Proposition \ref{prime} goes through and shows that there exists $\sigma' \in \OO$ such that $\sigma'$ is fixed by the action of $W_\OO$. 
\end{rema}

\subsection{Small dimensions}

In this section we prove, with a case-by-case analysis, that every orbit $\OO$ of $W_\OO$ has a fixed point when $G= \SL(n,F)$ with $n\leq 9$ and $n \neq 8$.

\subsubsection{For $n=2$}

Let $G=\SL(2, F)$. Then $G$ has only one proper Levi subgroup: $$\left\{\begin{pmatrix} x & 0 \\ 0 & x^{-1} \end{pmatrix}, \; x \in F^\times \right\}.$$ Its Weyl group $W(M)$ is isomorphic to $\Z/2\Z$. We denote by $w$ the non trivial element of $W(M)$. Let $\sigma \in \dM$. Since $M$ is isomorphic to $F^\times$, $\sigma$ can be seen as a character of $F^\times$ denoted by $\xi$. Therefore, we have $\sigma\left(  \begin{pmatrix} x & 0 \\ 0 & x^{-1} \end{pmatrix} \right) = \xi(x)$ and $W(M)$ acts on $\xi$ by $^w\xi = \xi^{-1}$.

Assume that $W_\OO = \Z/2\Z$. Then there exists $\chi \in \mathcal{X}_{nr}^u$ such that $^w\xi = \xi \chi$. Define $\xi' = \xi \chi^{1/2}$. Then $$^w\xi' = \left(\xi \chi^{1/2}\right)^{-1}= \xi \chi \chi^{-1/2} = \xi\chi^{1/2} = \xi'.$$ Hence $\xi'$ is a fixed point. 

\subsubsection{For $n=3$}

Let $G= \SL(3,F)$. There are only two partitions giving rise to Levi subgroups of $G$ namely, $n=2+1$ and $n=1+1+1$. In the first case, $W_\OO$ is trivial. And the second one is treated by Proposition \ref{bloc1}. Therefore there is always a fixed point in this case. 

\bigskip 

In the next subsection, concerning $n=4,5, 6,7, 9$, we will look at partitions that have not already been covered by Propositions \ref{bloc1} and \ref{prime}. For any $w \in W_\OO$, we also assume that the chosen $\eta$ associated to $w$ is totally ramified; otherwise, it is treated by Remark \ref{coro}.

\bigskip

\subsubsection{For $n=4$}\label{n4} The only partition for which our previous results do not immediately imply the existence of fixed points is $n=2+2$. Consider the Levi subgroup $$M= \left[\begin{matrix} \GL(2) & 0 \\ 0 & \GL(2) \end{matrix}\right]_{\det=1}.$$ Its Weyl group $W(M)$ is isomorphic to $\Z/2\Z$. Denote by $w$ the non trivial element of $W(M)$. Let $\sigma \in \dM$ and $\pi_\sigma \in \dtM$ be such that $\sigma \subset \pi_{\sigma|M}$, and assume $W_\OO =\Z/2\Z$. We have $\pi_\sigma \simeq \pi_1 \otimes \pi_2$ with $\pi_1,\pi_2 \in \mathcal{E}^2(\GL(2,F))$ and there exists $\eta \in \wF$ totally ramified and $\chi = \chi_1 \otimes \chi_2 \in \XtMnru$ such that $$\left\{\begin{matrix} \pi_2 \simeq \pi_1 \chi_1 \eta \\ \pi_1 \simeq \pi_2 \chi_2 \eta \end{matrix}\right. .$$ Calculating $^{w^2}\!\pi_\sigma$, we have $$\pi_1 \simeq \pi_1 \chi_1 \chi_2 \eta^2.$$

\bigskip

Since $\pi_1 \simeq \pi_1 \chi_1 \chi_2 \eta^2$, by Lemma \ref{ordre}, we have $\eta^4=1$. For the construction of a fixed point, there are two possible cases.

The first case is when $\eta^2 \in X(\pi_1)$. In this case, define $$\pi_\sigma' = \pi_1 \otimes \left(\pi_2 \chi_1^{-1}\right) \simeq \pi_1 \otimes \pi_1\eta$$ then we have $$^w \pi_\sigma' \simeq \left(\begin{matrix} \pi_1\eta & \\ & \pi_1 \end{matrix}\right) \simeq  \left(\begin{matrix} \pi_1 \eta & \\ & \pi_1 \eta^2\end{matrix}\right) \simeq \pi_\sigma' \eta.$$ Hence $\pi_\sigma'$ is fixed under the action of $W_\OO$.

The second case is when $\eta^2 \notin X(\pi_1)$. Define $\pi_\sigma'' \simeq \pi_1\chi_2^{-1/2} \otimes \pi_2\chi_1^{-1/2}$. Then we have \begin{equation*} ^w \pi_\sigma'' \simeq \left(\begin{matrix} \pi_2\chi_1^{-1/2} & \\ &  \pi_1\chi_2^{-1/2} \end{matrix}\right) \simeq  \left(\begin{matrix} \pi_1  \chi_1^{1/2}\eta & \\ & \pi_2\chi_2^{1/2} \eta\end{matrix}\right) \simeq  \left(\begin{matrix} \pi_1\chi_2^{-1/2} & \\ &  \pi_2\chi_1^{-1/2} \end{matrix}\right) \otimes \chi_1^{1/2}\chi_2^{1/2}\eta\simeq  \pi_\sigma'' \otimes \chi_1^{1/2}\chi_2^{1/2}\eta.  \end{equation*} Here, the characters appearing inside the matrices are characters of $\GL(2,F)$ whereas those outside are characters of $F^\times$. Thus $\pi''_\sigma \simeq \pi''_\sigma \otimes \eta'$ for some $\eta' \in \wF$, which proves that $\pi''_\sigma$ is a fixed point under the action of $W_\OO$.

\bigskip

\subsubsection{For $n=6$}\label{n6} There are still two partitions to consider: $n=3+3$ and $n=2+2+2$. For the first partition, let $M$ be the Levi subgroup associated to $n= 3+3$. Then the same construction given in \ref{n4} can be applied and we obtain the same fixed points depending on if $\eta^2$ is in $X(\pi_1)$ or not. For the second partition, let $M$ be the Levi subgroup associated to $n=2+2+2$. The Weyl group of $M$ is isomorphic to $\mathfrak{S}_3$. Let $\sigma \in \dM$ and $\pi_\sigma \in \dtM$ be such that $\sigma \subset \pi_{\sigma|M}$ with $\pi_\sigma \simeq \pi_1 \otimes \pi_2 \otimes \pi_3$. Then there are several possibilities for $W_\OO$: 
\begin{itemize}
\item If $W_\OO \cong \Z/2\Z$, denote by $w$ its non trivial element. Then there exists $\eta \in \wF$ totally ramified and $\chi = \chi_1 \otimes \chi_2 \otimes \chi_3 \in \XtMnru$ such that $$\left\{\begin{matrix} \pi_2 \simeq \pi_1 \chi_1 \eta \\ \pi_1 \simeq \pi_2 \chi_2 \eta \\ \pi_3 \simeq \pi_3 \chi_3\eta\end{matrix}\right.$$

By Lemma \ref{ordre} we know, using the third equality, that $\eta^2=1$. Set $\pi_\sigma' = \pi_1 \otimes \pi_1 \chi_3 \eta \otimes \pi_3$. This is a fixed point under the action of $W_\OO$ because since $(\chi_3\eta)^2= 1$, we have $$^w \! \pi_\sigma' \simeq \left( \begin{matrix} \pi_1\chi_3 \eta &&\\ & \pi_1 \\&& \pi_3 \end{matrix} \right) \simeq \left( \begin{matrix} \pi_1\chi_3 \eta &&\\ & \pi_1\chi_3^2\eta^2 \\&& \pi_3  \chi_3 \eta \end{matrix} \right) \simeq \pi_\sigma' \otimes \chi_3\eta,$$ and $\pi'_\sigma$ is a fixed point as before.

\item If $W_\OO \cong \Z/3\Z$, denote by $w$ a generator of $W_\OO$. Then there exists $\eta \in \wF$ totally ramified and $\chi = \chi_1 \otimes \chi_2 \otimes \chi_3 \in \XtMnru$ such that $$\left\{\begin{matrix} \pi_2 \simeq \pi_1 \chi_1 \eta \\ \pi_3 \simeq \pi_2 \chi_2 \eta \\ \pi_1 \simeq \pi_3 \chi_3\eta\end{matrix}\right.$$

As in \ref{n4} there are two cases. If $\eta^3 \in X(\pi_1)$ set $\pi_\sigma' \simeq \pi_1 \otimes \pi_1\eta \otimes \pi_1\eta^2$. Then this is a fixed point. If $\eta^3 \notin X(\pi_1)$, set $\pi_\sigma'' \simeq \pi_1 \chi_1^{1/3}\chi_3^{-1/3} \otimes  \pi_2 \chi_2^{1/3}\chi_1^{-1/3} \otimes  \pi_3 \chi_3^{1/3}\chi_2^{-1/3}$ then  
\begingroup\setlength{\arraycolsep}{0,5pt}\renewcommand{\arraystretch}{0.7}  \begin{align*}^w\!\pi_\sigma'' & \simeq \left( \begin{matrix} \pi_2 \chi_2^{1/3}\chi_1^{-1/3}&& \\&  \pi_3 \chi_3^{1/3}\chi_2^{-1/3} & \\ &&  \pi_1 \chi_1^{1/3}\chi_3^{-1/3} \end{matrix} \right) \\ 
& \simeq \left( \begin{matrix} \pi_1 \chi_1^{2/3} \chi_2^{1/3}\eta && \\&  \pi_2 \chi_2^{2/3} \chi_3^{1/3} \eta & \\ &&  \pi_3 \chi_3^{2/3} \chi_1^{1/3}\eta \end{matrix} \right)  \\
&\simeq  \left( \begin{matrix}  \pi_1 \chi_1^{1/3}\chi_3^{-1/3} && \\&\pi_2 \chi_2^{1/3}\chi_1^{-1/3}& \\ &&\pi_3 \chi_3^{1/3}\chi_2^{-1/3} \end{matrix} \right) \otimes \chi_1^{1/3}\chi_2^{1/3} \chi_3^{1/3} \eta \\ & \simeq \;  \pi''_\sigma \otimes \chi_1^{1/3}\chi_2^{1/3} \chi_3^{1/3} \eta \end{align*} \endgroup and $\pi_\sigma''$ is a fixed point as before.

\item If $W_\OO \cong \mathfrak{S}_3$, then there exists $\eta, \mu \in \wF$ (totally ramified) and $\chi = \chi_1 \otimes \chi_2 \otimes \chi_3$, $\lambda = \lambda_1 \otimes \lambda_2 \otimes \lambda_3\in \XtMnru$ such that $^{(1\; 2 \; 3)} \pi_\sigma \simeq \pi_\sigma \chi \eta$ and $^{(1\; 2)} \pi_\sigma \simeq \pi_\sigma \lambda \mu$. Therefore $$\left\{\begin{matrix} \pi_2 \simeq \pi_1 \chi_1 \eta \\ \pi_3 \simeq \pi_2 \chi_2 \eta \\ \pi_1 \simeq \pi_3 \chi_3\eta\end{matrix}\right. \qquad \mathrm{and} \qquad \left\{\begin{matrix} \pi_2 \simeq \pi_1 \lambda_1 \mu \\ \pi_1 \simeq \pi_2 \lambda_2 \mu \\ \pi_3 \simeq \pi_3 \lambda_3\mu\end{matrix}\right. .$$
\end{itemize}

By computing the actions of $(1\; 2 \; 3)(1\; 2)$ and $(1\; 2)(1\; 2 \; 3)$ on $\pi_\sigma$, we find $$\left\{\begin{matrix} \pi_3 \simeq \pi_1 \lambda_2 \chi_1 \eta \mu \\ \pi_2 \simeq \pi_2 \lambda_3 \chi_2 \eta \mu \\ \pi_1 \simeq \pi_3 \lambda_1 \chi_3\eta \mu \end{matrix}\right. \qquad \mathrm{and} \qquad \left\{\begin{matrix} \pi_1 \simeq \pi_1 \lambda_1 \chi_2 \eta \mu \\ \pi_3 \simeq \pi_2 \lambda_2 \chi_1 \eta \mu \\ \pi_2 \simeq \pi_3 \lambda_3 \chi_3 \eta \mu\end{matrix}\right. .$$ Then combining the first equality on the left hand side with the second one on the right hand side, and the third equality on the left with the first on the right, we obtain $$\pi_3 \simeq  \pi_1 \chi_2 \chi_3^{-1} \qquad \mathrm{and} \qquad \pi_2 \simeq \pi_1.$$ Then we can choose $\eta =\mu =1$ and use Proposition \ref{prime} and Remark \ref{coro} to find as a fixed point $$\pi_\sigma' = \pi_1 \otimes \pi_1 \otimes \pi_1.$$

\bigskip

\subsubsection{For $n=9$} There are two partitions to look at: $n=2+2+2+3$ and $n=3+3+3$. Let 
\begingroup\setlength{\arraycolsep}{1,5pt}\renewcommand{\arraystretch}{0.9}  $$M =  \left[ \begin{matrix} \GL(2) &&& \\ & \GL(2) && \\ && \GL(2) & \\ &&& \GL(3) \end{matrix} \right]_{\det=1}$$ \endgroup be a Levi subgroup associated to the first partition. Its Weyl group $W(M)$ is isomorphic to a subgoup of $\mathfrak{S}_3$.  Let $\sigma \in \dM$ and $\pi_\sigma \in \dtM$ be such that $\sigma \subset \pi_{\sigma|M}$ with $\pi_\sigma \simeq \pi_1 \otimes~\pi_2 \otimes~\pi_3 \otimes~\pi_4$. Then there are several possibilities for $W_\OO$: 
\begin{enumerate}[(a)]
\item If $W_\OO = \{ 1 , (1 \; 2)\}$, there exists $\eta \in \wF$ and $\chi \in \XtMnru$ such that $^{(1 \; 2)} \pi_\sigma \simeq \pi_\sigma \chi \eta$. We use Lemma \ref{ordre} on $\pi_3$ and $\pi_4$ to prove that $\eta^2= 1$ and $\eta^3=1$. Therefore $\eta = 1$ and we use Remark~\ref{coro}~to find as a fixed point $$\pi_\sigma' = \pi_1 \otimes \pi_1 \otimes \pi_3 \otimes \pi_4.$$

\item If $W_\OO \cong \Z/3\Z$, there exists $\eta \in \wF$ and $\chi \in \XtMnru$ such that $^{(1 \; 2\; 3)} \pi_\sigma \simeq \pi_\sigma \chi \eta$. Using Lemma \ref{ordre} on $\pi_4$, we have $\chi_4^3 \eta^3= 1$ and $\eta^3=1$. Set $\pi_\sigma' = \pi_1 \otimes \pi_1 \chi_4\eta \otimes  \pi_1 \chi_4^2\eta^2 \otimes \pi_4$. Then 
\begingroup\setlength{\arraycolsep}{1,5pt}\renewcommand{\arraystretch}{0.8} $$^{(1 \; 2\; 3)} \pi_\sigma' \simeq \left( \begin{matrix} \pi_1 \chi_4 \eta &&& \\ & \pi_1 \chi_4^2 \eta^2 && \\ && \pi_1 & \\ &&& \pi_4 \end{matrix}\right) \simeq  \left( \begin{matrix} \pi_1 &&& \\ &\pi_1 \chi_4 \eta && \\ && \pi_1 \chi_4^2 \eta^2 & \\ &&& \pi_4\end{matrix}\right)\otimes \chi_4 \eta.$$ \endgroup So $\pi_\sigma'$ is a fixed point.

\item If $W_\OO \cong \mathfrak{S}_3$, $W_\OO$ is generated by $(1\; 2)$ and $(2 \; 3)$ and we saw in case (a) above that any $\eta$~associated to a transposition can be taken equal to 1. Then Proposition \ref{prime} and Remark~\ref{coro}~yields a fixed point that shall be $$\pi_\sigma' = \pi_1 \otimes \pi_1 \otimes \pi_1 \otimes \pi_4.$$
\end{enumerate}

Now, for the second partition; $n=3+3+3$.
\begin{enumerate}[(a)]
\item If $W_\OO \cong \Z/2\Z$. We use Lemma \ref{ordre} on $\pi_3$ to obtain $\eta^3 =1$. Computing $^{(1 \; 2)^3} \pi_\sigma$ we can show that $\eta$ can be taken equal to 1. Indeed, $$^{(1 \; 2)} \pi_\sigma \simeq^{(1 \; 2)^3} \pi_\sigma \simeq \pi_\sigma  {^{(1 \; 2)}}\chi \chi^2\eta^3 \simeq \pi_\sigma {^{(1 \; 2)}}\chi  \chi^2.$$ Therefore $\pi_2 \simeq \pi_1\chi_1^2 \chi_2$ so $\eta$ can be taken to be trivial and we use Remark \ref{coro} to find a fixed point that shall be $$\pi'_\sigma = \pi_1 \otimes \pi_1 \otimes \pi_3.$$
\item If $W_\OO \cong \Z/3\Z$, the exact same construction used in \ref{n6} for $6=2+2+2$ works here.
\item If $W_\OO \cong \mathfrak{S}_3$, $W_\OO$ is generated by $(1\; 2)$ and $(2 \; 3)$ and we saw, when $W_\OO \cong \Z/2\Z$, that the~$\eta$~associated to a transposition can be taken equal to 1. Then Proposition \ref{prime} and Remark \ref{coro} gives us a fixed point, namely $$\pi_\sigma' = \pi_1 \otimes \pi_1 \otimes \pi_3.$$
\end{enumerate}

\appendix
\section{Discussion on characters of order $m$.}\label{appendix}

Let $F$ be a non-archimedean local field of characteristic zero. Denote by $ \OO_F$ its ring of integers and by $\mathfrak{p}_F$ the unique maximal ideal of $\OO_F$. Let $q = p^f$ be the cardinal of the residue class field $k_F = \OO_F/ \mathfrak{p}_F$ and $d = [F : \Q_p]$ be the degree of the extension. Denote by $\varpi_F \in \wF$ a uniformizer, we have $F^\times \cong \; \left<\varpi_F \right> \times \; \OO_F^\times$. We want to determine conditions on $m$, $p$ and $q$ such that there exists a character of $F^\times$ ramified or totally ramified of order $m$. First, we need a result on the structure of $F^\times$. 

\bigskip

\begin{prop}[\cite{neukirch}, Proposition 5.7] Let $F$ be a non-archimedean local field of characteristic zero. Then with the above notations, we have \begin{align*} F^\times &  \cong \;  \left<\varpi_F \right> \times \mu_{q-1} \times (1 + \mathfrak{p}_F) \\& \cong \Z \oplus \Z/(q-1)\Z \oplus  \Z/p^a\Z \oplus \Z_p^d \end{align*} where $\mu_{q-1}$ is the group of $(q-1)$-th roots of unity, $a \geq 0$ and $\Z_p$ is the ring of integers of $\Q_p$.
\end{prop}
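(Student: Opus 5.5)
The plan is to peel off the three factors of $F^\times$ one at a time, and then identify the structure of the principal units as a $\Z_p$-module.

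First, the valuation $v_F : F^\times \to \Z$ is a surjective homomorphism with kernel $\OO_F^\times$. It splits via $n \mapsto \varpi_F^n$, so $F^\times \cong \left<\varpi_F\right> \times \OO_F^\times \cong \Z \times \OO_F^\times$. Next, reduction modulo $\mathfrak{p}_F$ gives a surjection $\OO_F^\times \to k_F^\times$ with kernel $1+\mathfrak{p}_F$. I split it using Teichmüller representatives. Since $X^{q-1}-1$ is separable over $k_F$ (as $p \nmid q-1$) and splits there into distinct linear factors, Hensel's lemma lifts each root. Hence $\mu_{q-1} \subset \OO_F^\times$ maps isomorphically onto $k_F^\times \cong \Z/(q-1)\Z$, and $\OO_F^\times \cong \mu_{q-1} \times (1+\mathfrak{p}_F)$. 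This gives the first line of the statement.

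It remains to show $1+\mathfrak{p}_F \cong \Z/p^a\Z \oplus \Z_p^d$, which is the main step. The group $U^1 = 1+\mathfrak{p}_F$ is the inverse limit of the finite $p$-groups $U^1/U^n$, since $U^n/U^{n+1} \cong k_F$ for $n\ge 1$. Therefore $x \mapsto x^\alpha$ extends continuously from $\alpha\in\Z$ to $\alpha \in \Z_p$, making $U^1$ a $\Z_p$-module.

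To control this module, I would use the $p$-adic logarithm and exponential. Let $e = v_F(p)$ be the absolute ramification index. For $n > e/(p-1)$, the series $\log(1+x)$ and $\exp(x)$ converge on $\mathfrak{p}_F^n$ and give mutually inverse isomorphisms $U^n \cong \mathfrak{p}_F^n$. The usual estimates are $v_F(x^k/k) > v_F(x)$ for $k\ge2$ in this range, together with the analogous bound on $v_F(k!)$. This convergence check is the only genuinely technical point, and I would follow the standard estimate $v_p(k!) \le (k-1)/(p-1)$.

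Since $\mathfrak{p}_F^n \cong \OO_F$ is free of rank $d=[F:\Q_p]$ over $\Z_p$, we get $U^n \cong \Z_p^d$. As $U^1/U^n$ is finite, $U^1$ is a finitely generated $\Z_p$-module of rank $d$. By the structure theorem over the PID $\Z_p$, $U^1 \cong T \oplus \Z_p^d$, where $T$ is the torsion submodule. The torsion $T$ consists of roots of unity in $U^1$. These have $p$-power order, because prime-to-$p$ roots of unity reduce injectively to $k_F^\times$, while elements of $U^1$ reduce to $1$. Moreover $T$ is finite, since $T \cap U^n = \{1\}$ for $n$ large by the logarithm isomorphism. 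A finite subgroup of $F^\times$ is cyclic, so $T \cong \Z/p^a\Z$ for some $a\ge0$. Assembling the three factors gives $F^\times \cong \Z \oplus \Z/(q-1)\Z \oplus \Z/p^a\Z \oplus \Z_p^d$.
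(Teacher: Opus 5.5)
Your proposal is correct. It is essentially the standard argument of the cited source (Neukirch, Ch.~II, Prop.~5.7): split off $\langle\varpi_F\rangle$ and the Teichmüller lifts $\mu_{q-1}$, then use the logarithm isomorphism $U^n\cong\mathfrak{p}_F^n\cong\Z_p^d$ for large $n$ to see that $1+\mathfrak{p}_F$ is a finitely generated $\Z_p$-module of rank $d$ whose torsion is a cyclic group $\Z/p^a\Z$. The paper itself gives no proof here and simply relies on that reference.
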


\bigskip

A direct consequence of this proposition is that the group of characters of $F^\times$ is isomorphic to $$\widehat{\Z} \oplus \widehat{\Z/(q\!-\!1)\Z} \oplus \widehat{ \Z/p^a\Z} \oplus \widehat{\Z_p^d}.$$ Let us to describe each component of $\wF$. The first component, $\widehat{\Z}$, is isomorphic to $\C^\times$: this component corresponds to the set of unramified characters, namely those that are trivial on $\OO_F^\times$. The next two components are abelian cyclic groups; therefore, their group of characters are isomorphic to themselves. Now for the last component, we have $\widehat{\Z_p^d} \cong \left(\widehat{\Z_p}\right)^d$ and the following proposition.

\bigskip

\begin{lemm}[\cite{BTS1}, Corollary of Proposition 20 p.237]\label{A2} The group of additive characters of $\Z_p$ is isomorphic to $\Q_p/\Z_p$.\end{lemm}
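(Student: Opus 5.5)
The plan is to pass to the Pontryagin dual of the profinite group $\Z_p = \varprojlim \Z/p^k\Z$ and compute it as a direct limit. First, a continuous character $\psi : \Z_p \to \S^1$ has open kernel. The subgroups $p^k\Z_p$ form a neighbourhood basis of $0$ and $\psi(\Z_p)$ is compact, so we can argue as follows. Choose a small neighbourhood $U$ of $1$ in $\S^1$ containing no nontrivial subgroup. Then $\psi^{-1}(U)$ contains some $p^k\Z_p$, and since $\psi(p^k\Z_p)$ is a subgroup of $U$, it is trivial. Hence every character factors through a finite quotient $\Z_p/p^k\Z_p \cong \Z/p^k\Z$, and
$$\widehat{\Z_p} = \bigcup_{k\ge 0} \widehat{\Z_p/p^k\Z_p}.$$

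Second, we identify each finite layer explicitly. A character of $\Z/p^k\Z$ is determined by the image of $1$, which must be a $p^k$-th root of unity. Concretely, for $x \in p^{-k}\Z_p/\Z_p$ we set
$$\psi_x(a) = e^{2i\pi \{ax\}_p},$$
where $\{\cdot\}_p$ denotes the $p$-adic fractional part, i.e. the element of $\Z[1/p]\cap[0,1)$ congruent to $ax$ modulo $\Z_p$. This gives an isomorphism $p^{-k}\Z_p/\Z_p \cong \widehat{\Z/p^k\Z}$. We then check that $a \mapsto \{ax\}_p$ is additive modulo $\Z$, and that it vanishes on $p^k\Z_p$ exactly when $x \in p^{-k}\Z_p$.

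Third, we check compatibility with the inclusions in $k$. This is automatic from the definition, since the same formula is used at every level. The map $x \mapsto \psi_x$ therefore glues to a homomorphism
$$\Q_p/\Z_p = \bigcup_k p^{-k}\Z_p/\Z_p \longrightarrow \widehat{\Z_p}.$$
It is injective because each layer map is injective, and it is surjective by the first step.

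The main obstacle is the first step, namely showing that every continuous character has open kernel, i.e. that $\S^1$ has no small subgroups. Everything else is routine bookkeeping. Alternatively, one may simply cite \cite{BTS1}, as the statement does, since this is the standard computation of the dual of $\Z_p$.
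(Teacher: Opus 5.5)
Your proof is correct, and it differs from the paper only in that the paper gives no argument at all: it simply cites Bourbaki, where the statement appears as a corollary in the duality theory of locally compact abelian groups. So your proposal is a self-contained replacement rather than a reconstruction of anything in the paper.

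What your route buys is economy and explicitness:
\begin{enumerate}
\item The topology enters only through the no-small-subgroups property of $\S^1$, which forces every continuous character to factor through some $\Z/p^k\Z$. This is also exactly where continuity is indispensable, since abstract homomorphisms $\Z_p\to\S^1$ form a vastly larger group.
\item The rest is the finite cyclic case together with the explicit pairing $\psi_x(a)=e^{2i\pi\{ax\}_p}$.
\item This description shows at once that $\widehat{\Z_p}\cong\Q_p/\Z_p\cong\Z[1/p]/\Z$ is a $p$-primary torsion group. Hence every character of $\Z_p$ has $p$-power order, which is precisely the information needed when looking for characters of a prescribed order, as in the appendix.
\end{enumerate}
The general-duality viewpoint of the reference instead places the statement inside Pontryagin duality. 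It thereby yields companion facts such as $\widehat{\Q_p/\Z_p}\cong\Z_p$ at no extra cost.

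Two minor points. First, the compactness of $\psi(\Z_p)$ plays no role in your first step. Second, the step you flag as the main obstacle is a two-line check. Take $U=\{z\in\S^1 : \mathrm{Re}\, z>0\}$. If $z=e^{i\theta}$ with $0<|\theta|<\pi/2$, the least $j\geq 1$ with $j|\theta|\geq\pi/2$ satisfies $j|\theta|<\pi$, so $z^j\notin U$.
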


\bigskip

Now, we discuss the existence of characters of order $m$ for each component. For the first component of $\wF$, the unramified characters, there is always a character of order $m$. Those characters will be associated to a $m$-th root of unity, for instance, the character of  $F^\times$ given by~$x \longmapsto e^{\frac{2i \pi}{m}\val(x)}$ is of order $m$.

\bigskip

\begin{prop} There exists a ramified character of order $m$ of $F^\times$ if and only if one of the three following conditions is satisfied: \begin{enumerate} \item $m \; | \; q-1$,
\item $m \;|\; p^a$, 
\item there exists $x \in \Q_p/\Z_p$ such that $\max_{1 \leq k < m} |k|_p < | x |_p\leq|m|_p^{-1}$.
\end{enumerate}
\end{prop}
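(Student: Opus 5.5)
We will follow the decomposition of $\wF$ just established, namely
$$\wF \cong \widehat{\Z} \oplus \widehat{\Z/(q-1)\Z} \oplus \widehat{\Z/p^a\Z} \oplus \left(\widehat{\Z_p}\right)^d,$$
and treat the statement one factor at a time. The first factor $\widehat{\Z}$ consists of the unramified characters. As in the Remark following Proposition \ref{WTheta}, a ramified character $\mu$ is changed only in its unramified part when we replace it by $\mu_{ram}$, normalized by $\mu(\varpi_F)=1$. So we will look for characters of order $m$ that are trivial on $\langle \varpi_F\rangle$, i.e. characters of $\OO_F^\times \cong \mu_{q-1}\times(1+\mathfrak{p}_F)$ extended by $\mu(\varpi_F)=1$. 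Each of the conditions (1)--(3) will then be matched with one of the three remaining factors.

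\emph{Sufficiency.} Suppose first that (1) holds. The group $\widehat{\Z/(q-1)\Z}$ is cyclic of order $q-1$, so it contains an element of order $m$ exactly when $m \mid q-1$. Explicitly, we take $\zeta \mapsto e^{2i\pi/m}$ on the generator $\zeta$ of $\mu_{q-1}$, and make it trivial on $\varpi_F$ and on $1+\mathfrak{p}_F$. This is precisely the character $\eta$ used in Section \ref{3}. Condition (2) works the same way with the cyclic group $\widehat{\Z/p^a\Z}$, which is the torsion part of $1+\mathfrak{p}_F$.

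For condition (3) we use Lemma \ref{A2} to identify $\widehat{\Z_p}$ with $\Q_p/\Z_p$. The order of the class of $x$ in $\Q_p/\Z_p$ is determined by $|x|_p$. Indeed, $kx \in \Z_p$ if and only if $|k|_p|x|_p \leq 1$. So the class of $x$ has order exactly $m$ when $|m|_p|x|_p\leq 1$ and $|k|_p|x|_p>1$ for all $1\le k<m$. We will check that this is the inequality recorded in (3), once it is read through $|k|_p^{-1}$. The corresponding character of one copy of $\Z_p$ in $1+\mathfrak{p}_F$, extended trivially to the other factors, then has order $m$ and is ramified.

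\emph{Necessity.} Conversely, let $\mu$ be a ramified character of order $m$, normalized so that $\mu(\varpi_F)=1$. Its restriction to each factor is a character whose order divides $m$, and $m$ is the lcm of these orders. On each factor, the computations above show which orders can occur:
\begin{itemize}
\item on $\mu_{q-1}$, the divisors of $q-1$;
\item on $\Z/p^a\Z$, the divisors of $p^a$;
\item on $(\widehat{\Z_p})^d$, the powers of $p$ detected by $|x|_p$.
\end{itemize}
The main obstacle is to get from ``$m$ is an lcm of such orders'' to ``one single factor already carries order $m$''. When $\mu$ is nontrivial on only one factor, this is immediate from the three computations above. This is the situation the paper has in mind, since it looks for characters such as $\eta$ that live on a single component.

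I would first try to reduce the general case to that one. The point is that the order-$m$ condition is read factor by factor, and the torsion factors $\Z/p^a\Z$ and $\Q_p/\Z_p$ both only contribute powers of $p$. If this reduction does not hold in full generality, I would restrict the statement to characters supported on one component. That restricted version is the one used in Section \ref{2} and Section \ref{3}.
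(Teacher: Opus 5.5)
Your sufficiency half is fine and is the paper's own factor-by-factor computation. Your reading of (3) through $|k|_p^{-1}$ is also the one the paper's proof actually arrives at (as printed, the left side is $\max_{1\le k<m}|k|_p=|1|_p=1$). Under that reading, (3) just says that $m$ is a power of $p$.

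The gap is necessity, and it cannot be closed. The reduction you hope for, from ``$m$ is the lcm of the orders on the factors'' to ``a single factor already carries order $m$'', is false, and so is the ``only if'' direction. The $p$-power contributions can be merged, since one copy of $\Q_p/\Z_p$ has elements of every $p$-power order. But the tame factor $\mu_{q-1}$ cannot be merged with the wild ones. Over $F=\Q_5$ (so $q=5$, $a=0$, $d=1$), let $\xi$ be trivial on $5$, quadratic on $\mu_4$, and of order $5$ on $1+5\Z_5\cong\Z_5$. Then $\xi$ is totally ramified of order $10$. Yet $10\nmid 4$, $10\nmid 5^0$, and $\max_{1\le k<10}|k|_5^{-1}=5=|10|_5^{-1}$ leaves no room for $x$. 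The paper's proof has exactly this hole: its first sentence asserts, without argument, that $F^\times$ has a character of order $m$ if and only if one of the three factors does.

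Your normalization $\mu\mapsto\mu_{ram}$ is a second problem for necessity. The order $\mathrm{ord}(\mu)=\mathrm{lcm}(\mathrm{ord}\,\mu_{nr},\mathrm{ord}\,\mu_{ram})$ changes when you drop $\mu_{nr}$. Over $\Q_5$, take the quadratic character of $\mu_4$ (trivial on $5$ and on $1+5\Z_5$) times the unramified character with $5\mapsto e^{2i\pi/3}$. It is ramified of order $6$, and $6$ satisfies none of (1)--(3) under either reading of (3).

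Reading (3) as printed does not rescue the statement either. It then means $p\mid m$, which does not produce a totally ramified character of order $m$ (e.g.\ $m=15$ over $\Q_5$).

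What is true follows from $\widehat{\OO_F^\times}\cong\Z/(q-1)\Z\oplus\Z/p^a\Z\oplus(\Q_p/\Z_p)^d$ with $d\ge1$. A character of order $m$ trivial on $\varpi_F$ exists iff the prime-to-$p$ part of $m$ divides $q-1$. A ramified character of order $m$ exists iff some prime $\ell\mid m$ satisfies $\ell\mid q-1$ or $\ell=p$. Your fallback of restricting to characters supported on a single factor yields a true but different statement; it is not a proof of the one given.
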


\bigskip

\begin{proof} There exists a character of order $m$ of $F^\times$ if and only if one of the three groups $\Z/(q-1)\Z$, $\Z/p^a\Z$ or $\Z_p$ has a character of order $m$. For the first two points this reduces to determining under which conditions there exists an element of order $m$ in $\mathbb{Z}/(q-1)\mathbb{Z}$ and in $\mathbb{Z}/p^a\mathbb{Z}$. This happens if and only if,  $m \; | \; q-1$ and respectively if and only if $m \; | \; p^a$. 

For the third point, using Lemma \ref{A2}, assume there exists an element $x$ of order $m$ in $\Q_p/\Z_p$.  This means~$mx \in \Z_p$ and for all $1 \leq k \leq m-1$, $kx \notin \Z_p$. Then we have, $v_p(mx) =~v_p(m) +~v_p(x) \geq~0$ and for all $1 \leq k \leq m-1$, $v_p(k) + v_p(x)< 0$. Hence there exists a character of $ \widehat{\Z_p}$ of order $m$ if and only if, $$-v_p(m) \leq v_p(x) < - \min_{1 \leq k \leq m-1} v_p(k).$$ Therefore $$\max_{1 \leq k < m} |k|^{-1}_p < | x |_p\leq|m|_p^{-1}.$$ The reverse implication is immediate.
\end{proof}

\backmatter

\bibliographystyle{smfalpha}
\bibliography{biblioptfixe}

\end{document}